\documentclass[reqno,11pt]{amsart}

\usepackage{amsmath}               % AmSLaTeX
\usepackage{amsthm}                
\usepackage{latexsym}
\usepackage[ansinew]{inputenc}
\usepackage{graphicx}
\usepackage{hyperref}
\usepackage{amssymb}
\usepackage{xspace, xcolor}
\usepackage{amscd}
\usepackage{ulem}
\usepackage[all]{xy}
\newcommand{\pr}[2]{\ensuremath{\langle {#1},{#2}\rangle}}

\newtheorem{theorem}{Theorem}[section]
\newtheorem{definition}[theorem]{Definition}
\newtheorem{lemma}[theorem]{Lemma}
\newtheorem{corollary}[theorem]{Corollary}
\newtheorem{proposition}[theorem]{Proposition}

\newtheorem{remark}[theorem]{Remark}
\newtheorem{notation}[theorem]{Notation}

\newcommand{\HOM}{\mathrm{HOM}}
\newcommand{\END}{\mathrm{END}}
\newcommand{\End}{\mathrm{End}}
\newcommand{\Hom}{\mathrm{Hom}}
\newcommand{\gr}{\mathrm{-gr}}
\newcommand{\grd}{\mathrm{gr-}}
\newcommand{\GR}{\mathrm{-GR}}
\newcommand{\GRD}{\mathrm{GR-}}
\newcommand{\bimod}[2]{#1\mathrm{-}#2}

\newcommand{\flecha}{\rightarrow}
\newcommand{\flechag}{\longrightarrow}

\newcommand{\tq}{\mid}
 
\newcommand{\flechadecor}[1]{\xrightarrow{~#1~}}

\begin{document}
	
\title{Graded Equivalence for Graded Idempotent Rings}

	\author{Mikhailo Dokuchaev$^a$}
	\address{$^a$Instituto de Matem\'atica e Estat\'istica, Universidade de S\~ao Paulo,  Rua do Mat\~ao, 1010, S\~ao Paulo, SP,  CEP: 05508--090, Brazil,  E-mail: \texttt{dokucha@gmail.com}}
	%\email{dokucha@gmail.com}
% 	\thanks{Corresponding author: Mikhailo Dokuchaev (\texttt{dokucha@gmail.com})}
	
	\author{Juan Jacobo Sim\'on$^c$}
	\address{$^c$Departamento de Matem\'{a}ticas, Universidad de Murcia, 30071 Murcia, Spain,  E-mail: \texttt{jsimon@um.es}}
	%\email{jsimon@um.es} 

\subjclass{16W50; 16D90}
\keywords{ Graded ring, idempotent ring, graded module, graded equivalence.}

\maketitle

\begin{abstract}
In this paper, we extend the study of graded equivalences to the case of general idempotent graded  rings. We prove that the existence of a graded equivalence between two categories of graded torsion-free unital modules may be characterized by the existence of a Morita context with surjective trace maps. As an  application of our results we relate certain lattices of graded submodules and graded ideals of  graded equivalent garded rings and  give some properties invariant under graded equivalences.
\end{abstract}
% 
% \begin{keywords}
% graded ring \sep idempotent ring\sep graded module\sep graded equivalence.
% %quadrupole exciton \sep polariton \sep \WGM \sep \BEC
% \end{keywords}
% 
% \maketitle

\section{Introduction}

The classical  Morita Theory has  been addressed for the case of graded rings by dealing with two kinds of graded-equivalences: one, the graded-equivalence, as defined in \cite{GordonGreen}  and \cite{MeniniNast}, and a stronger one, the graded Morita equivalence, given in \cite{Boisen}. As it has ocurred in the classical theory (see \cite{Abrams}, \cite{AnhMarki}), the study of graded equivalences has been carried out to the case of graded rings  with local units  
in \cite{Haefner94}  (graded-equivalence) and  \cite{Haefner95} (graded Morita  equivalence). Besides rings with local units, the clasical Morita Theory  was extended to idempotent rings in \cite{Nobusawa} and \cite{GS}. In particular, it was proved in \cite{Nobusawa} that, given an appropriate  Morita  context between idempotent torsion-free rings, there exists an equivalence between  the categories of unital torsion-free modules. This was extended in \cite{GS} for general idempotent rings, proving also that a category equivalence, as above,  implies the existence of an appropriate Morita context.   

In \cite{AbDoEx2024}, besides the new concept of a strong equivalence for graded rings, the graded equivalence from  
\cite{GordonGreen}  and \cite{MeniniNast} was considered for general idempotent  graded rings  in the language of Morita contexts. Thus, it is natural to study the relation between Morita contexts and categories of modules for general idempotent graded rings, a task acomplished in the present article. 

After the preliminary Section~\ref{backgroud}, we prove in Section~\ref{Sec Morita Equiv} that  the graded functors, which give  a graded  equivalence between  the categories of unital torsion-free graded modules, can be realized by means of appropriate HOM functors (see Proposition~\ref{F y G iso a los HOM}). In the next Section~\ref{ SecMorita contexts}, given two idempotent torsion-free graded  rings $A$ and $B$  and a graded equivalence between the categories of their unital torsion-free graded modules, we produce a non-degenerate graded Morita context between $A$ and $B$ with surjective trace maps (see Theorem~\ref{Car Equiv}). The main result of Section~\ref{sec:FromMoritaContextToEquiv} is
Theorem~\ref{contextos a equivalencias final}, which states that if there exists a graded Morita context between idempotent torsion-free graded  rings $A$ and $B$  with torsion-free graded  bimodules and surjective trace maps, then the categories of  unital torsion-free graded modules over $A$ and $B$ are graded equivalent, and the functors which give the graded equivalence are described in terms of $\otimes $ and ${\rm HOM}.$    

With respect to the graded equivalence studied for general idempotent graded rings in \cite{AbDoEx2024}, we relax  the torsion-free restriction on the rings in Section~\ref{sec:Generalization}, proving in
Theorem~\ref{teo:Generalization} that, given a graded Morita context with surjective trace maps between idempotent not necessarily torsion-free graded rings, their categories of unital torsion-free graded modules are graded equivalent, and the equivalence functors again involve $\otimes$ and ${\rm HOM}.$ Observe that, in view of Lemma~\ref{el HOM siempre es torsionfree}, it can be seen from this, that  the category of unital torsion-free graded modules is the largest category of modules over general idempotent graded rings, for which graded equivalence can be considered in terms of HOM-functors.

In the final Section~\ref{sec:Applications} we give some applications for idempotent torsion-free graded rings, producing isomorphisms between  appropriate lattices of graded submodules and lattices of the unital torsion-free graded two-sided ideals, as well as  listing some properties of graded modules and rings  invariant under  graded equivalences.

\section{Background.}\label{backgroud}

We take all standard definitions related to graded rings from \cite{GS} and \cite{methods}. All rings considered in this paper are associative  group  graded rings, but we do not assume that they have  a unity element. 

 Let $\Gamma$ be a multiplicative group, with identity element $e\in \Gamma$,  which will be fixed  \underline{in all what follows}. Let, furthermore,  $A$ be a  $\Gamma$-graded ring
and let $M$ be a  $\Gamma$-graded left $A$-module. We say that $A$ is {\it idempotent} if $A^2=A$ and $M$ is said to be {\it unital} if $AM=M$.  As usual we define the {\it left $A$-torsion part} of $M$ as $t_A(M)=\{m\in M\tq Am=0\}$. It is well-known that $t_A(M)$ is a graded unital left $A$-submodule of $M$. We shall say that $M$ is {\it torsion-free} if $t_A(M)=0$; that is, for any $m\in M$,  the equality $Am=0$ implies $m=0$. There are analogous definitions for graded right $A$-modules.  For rings, we have to distinguish between left and right torsion. So we denote the left torsion of $A$ by $l(A)$ and the right one by  $r(A).$

 \underline{Throughout this paper} by a   {\it graded ring} (respectively,  {\it module}) we shall mean a ring (respectively, module) graded by the group $\Gamma.$ \underline{In all what follows} $A$ will denote an idempotent  graded ring, which occasionally will be assumed to be left and  right torsion-free.  We denote by $A\gr$ (resp. $\grd A$) the category of all unital and torsion-free left (resp. right)  graded  $A$-modules.  We shall say that {\it a  graded ring is torsion-free} if it is left and right torsion-free; in this case,  $A\in A\gr $ and $A \in \grd A .$ 
 
 As we mentioned in the Introduction, we will study the graded equivalences between categories of unital and torsion-free modules; however, these categories, as it is known, are not closed, for example, under quotients or tensor products. Furthermore, some Morita contexts that we will consider will not be formed by torsion-free modules, although they will always be unital. Therefore, we will   consider the category $A\gr$ (resp. $\grd A$) as a full subcategory of the category $A\GR$ (resp. $\GRD A$) of left (resp. right) unital $A$-modules.

We recall that, for any  $M\in A\GR$ and $\sigma\in \Gamma$, the {\it $\sigma$-suspension} of $M,$  which we denote by $M(\sigma)$, is the graded left $A$-module such that $M(\sigma)_\tau=M_{\tau\sigma}$.

Let  $M,N\in A\GR$. The {\it morphisms in the category} $A\GR$ are defined as
 \[\Hom_{A\GR}(M,N)=\{f\in \Hom_A(M,N)\tq  f(M_\sigma)\subseteq N_\sigma  \text{  for all  }\sigma\in \Gamma\}.\]  
 The morphisms in the subcategory $A\gr$ will be denoted by $\Hom_{A\gr}(M,N)$, for $M,N\in A\gr$. Note that, in this case, $\Hom_{A\gr}(M,N)=\Hom_{A\GR}(M,N)$.
 
 On the other hand, we recall the definition of a  graded homomorphism of a given degree from \cite[Section 2.4]{methods}. Let  $M,N\in A\GR$. A homomorphism $f\in \Hom_A(M,N)$ is called a {\it graded homomorphism of degree} $\sigma\in \Gamma$ if  $f(M_\tau)\subseteq N_{\tau\sigma}$, for all $\tau\in \Gamma$. Clearly, for a fixed $\sigma\in \Gamma$ the set of all graded homomorphisms of degree $\sigma$  from $M$ to $N$ form an abelian group under the usual sum, which  we denote  by $\HOM_A(M,N)_\sigma$. The set of all graded $A$-homomorphisms  (of all degrees) is denoted by $\HOM_A(M,N)$.
 The following equality is easy to see
\begin{equation}\label{eq:HOM_direct_sum_decomposition}
\HOM_A(M,N)=\oplus_{\sigma\in \Gamma} \HOM_A(M,N)_\sigma, \text{  a direct sum of abelian groups}.
\end{equation}
    Obviously, $\HOM_A(M,N)_e=\Hom_{A\GR}(M,N)$; moreover,
 \begin{equation}\label{eq:HOM_ditect_sum_components}
\HOM_A(M,N)_\sigma= \Hom_{A\GR}(M,N(\sigma))=\Hom_{A\GR}(M(\sigma^{-1}),N).
 \end{equation} 
 
   Symmetrically, for  $M,\,N\in \GRD A$,  a {\it graded homomorphism of degree} $\sigma\in \Gamma$ is defined by requiring  $f(M_\tau)\subseteq N_{\sigma\tau}$, for all $\tau\in \Gamma$. For the set of all graded homomorphisms of graded right $A$-modules  we shall use the same notation $\HOM_A(M,N)$. By a {\it graded homomorphism of graded $A$-bimodules} we shall mean an $A$-bimodule homomorphism, which is a graded homomorphism of left (equivalently, right) graded $A$-modules  of degree $e\in \Gamma$.
 
  We often write homomorphisms acting (multiplicatively) on the opposite side to the scalars; for example, for $M,N\in A\gr$,   $f\in \HOM_A(M,N)$ (or $f\in \Hom_{A\GR}(M,N)$) and $m\in M$, we write $f(m)=mf$, and we shall use both notations.
 
 As usual, we denote the ring of  endomorphisms (respectively, graded endomorphisms) of any  $M\in A\GR$,  by  $\End(_{A\GR} M)$ (resp. $\END(_AM)$, see \cite{methods}). Following our notation above, we write endomorphisms acting as opposite scalars; that is, for any $\alpha\in \End(_{A\GR} M)$ or $\END(_A M)$ and $m\in M$, we write $\alpha(m)=m\alpha$.
 
For $M,N\in A\GR$, we define the {\it graded trace} of $M$ in $N$ as $$Tr_N(M)=\sum\{Im(f)\tq f\in\Hom_{A\GR}(M(\tau),N),\;\tau\in \Gamma\}.$$ We recall that $Tr_N(M)$ is a graded unital submodule of $N$ and the  graded trace of $M$ in the ring $A$ is a graded two-sided ideal in $A$.
 
 \begin{remark}\label{la traza de HOM}
 	It is immediately seen that, for $M,N\in A\GR$ we have that $$Tr_N(M)=\sum\{Im(f)\tq f\in \HOM_A(M,N)\}.$$
 \end{remark}
 
 As in the case of graded rings with identity (see \cite{Boisen}), we define a {\it graded Morita context} as a  sextuple $(A,B,P,Q,\mu,\nu),$ in which $A$ and $B$ are idempotent   graded rings, $_AP_B$ is a graded $\bimod{A}{B}$-bimodule and $_BN_A$ is a graded $\bimod{B}{A}$-bimodule; both of them  unital at both sides. The maps $\mu:P\otimes _B Q\flecha A$ and $\nu:Q\otimes_A P\flecha B$ (called {\it trace maps}) are graded bimodule homomorphisms, satisfying the following associativity conditions. For $m,m'\in M$ and $n,n'\in N$, setting $\mu(m\otimes n)=\pr{m}{n}\in A$ and $\nu(n\otimes m)=[n,m]\in B$ we have that
 \begin{align}\label{condiciones asoc Contextos}
 m'[n,m]=\pr{m'}{n}m \quad\text{and}\quad n'\pr{m}{n}=[n',m]n.
 \end{align} 

 \begin{remark}\label{torsion en tor}
It is worth to mention that, even when $_AP_B$ and $_BQ_A$ are unital torsion-free bimodules, the graded bimodules $P\otimes_B Q$ and $Q\otimes_A P$ are unital as bimodules, but not necessarily torsion-free.
\end{remark}

 \begin{definition}\label{generador unital}
 Let $_A P$ and $_A M$ be unital graded modules. We say that $P$ generates $M$ if for any unital graded module $_A N$ and $0\neq f\in \Hom_{A\GR}(M,N)$ there exist $\sigma\in \Gamma$ and $g\in \Hom_{A\GR}(P(\sigma),M)$ such that $f\circ g\neq 0$, where $P(\sigma)$ is the $\sigma$-suspension.
\end{definition}

 Following \cite{hazrat}, we define a generator of $A\gr$, as follows.

 \begin{definition}\label{generador en A-gr}
 A module $P\in A\gr$ is a graded generator of the category $A\gr$ if it generates every module from $A\gr$.
\end{definition}

The following result should be known, but  we  prove it for the reader's convenience. 

\begin{proposition}\label{unital genera unital}
 Let $_A P$ and $_A M$ be unital graded modules. Then $P$ generates $M$ if and only if $Tr_M(P)=M$.
\end{proposition}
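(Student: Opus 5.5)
Both implications will be argued directly from Definition~\ref{generador unital} and the definition of the graded trace. Throughout we use Remark~\ref{la traza de HOM} freely, so that $Tr_M(P)$ is the sum of the images of all graded homomorphisms $g\in\Hom_{A\GR}(P(\sigma),M)$, $\sigma\in\Gamma$. We also use the fact, recalled in the Background, that $Tr_M(P)$ is a graded unital submodule of $M$.

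($\Leftarrow$) Suppose $Tr_M(P)=M$ and let $0\neq f\in\Hom_{A\GR}(M,N)$ with $N$ unital graded. Since $f\neq 0$, some $m\in M$ has $f(m)\neq 0$. Write $m=\sum_i g_i(p_i)$ with $g_i\in\Hom_{A\GR}(P(\sigma_i),M)$. Then $\sum_i f(g_i(p_i))\neq 0$, so $f\circ g_i\neq 0$ for some $i$, which is exactly what the definition requires.

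($\Rightarrow$) Suppose $P$ generates $M$ and put $T=Tr_M(P)$. Consider $N=M/T$ with the quotient grading and the canonical projection $\pi\colon M\to M/T$, which is a degree-$e$ graded homomorphism. The module $M/T$ is unital because $M$ is: $A(M/T)=(AM+T)/T=M/T$. This is precisely why Definition~\ref{generador unital} quantifies over unital, not torsion-free, modules $N$: the quotient need not be torsion-free, but it is unital. If $T\neq M$, then $\pi\neq 0$, so by hypothesis there exist $\sigma$ and $g\in\Hom_{A\GR}(P(\sigma),M)$ with $\pi\circ g\neq 0$. But $\operatorname{Im}(g)\subseteq T$ by the definition of the trace, so $\pi\circ g=0$, a contradiction. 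Hence $T=M$.

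The only real obstacle is confirming that the test module $M/T$ is admissible. This needs $T$ to be a graded submodule, so that the quotient inherits a grading and $\pi$ is graded. That holds because $T$ is a sum of images of graded homomorphisms, and each such image is a graded submodule. The remaining point is unitality of $M/T$, which was checked above. Everything else is routine.
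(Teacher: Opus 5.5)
Your proof is correct, and your ($\Leftarrow$) direction is the same as the paper's. For ($\Rightarrow$) you take a more direct route than the paper. You test the generating hypothesis against the quotient $M/Tr_M(P)$ itself, which is legitimate because Definition~\ref{generador unital} only requires the test module $N$ to be unital.

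The paper instead sets $K=M/Tr_M(P)$ and splits into two cases:
\begin{itemize}
\item If $K$ is torsion, then $AM\subseteq Tr_M(P)$, and unitality gives $M=AM\subseteq Tr_M(P)$.
\item Otherwise, it tests against the nonzero map $M\to K/t_A(K)$. The target of this map is unital and, because $A^2=A$, torsion-free.
\end{itemize}

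The paper's detour only ever uses test modules lying in $A\gr$. So it proves something stronger: generating against torsion-free unital modules already forces $Tr_M(P)=M$. This is what makes Corollary~\ref{generador y traza} agree with the intrinsic notion of a generator of the category $A\gr$, where morphisms into modules outside $A\gr$ are not available.

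Your argument is shorter and does not even use idempotency of $A$. However, it would not work if the test class in the definition were restricted to $A\gr$, since $M/Tr_M(P)$ need not be torsion-free.
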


\begin{proof}
	First, assume that $P$ generates $M$ and set $K=M/Tr_M(P)$. If $K$ is a torsion $A$-module then $AK=0$, and hence $AM\subseteq Tr_M(P)$. As $M=AM$ we get $M=Tr_M(P)$. Otherwise, let $\eta: M\flecha K/t_A(K) \neq 0$ be the composition of the canonical maps $M \to M/Tr_M(P) \to K / t_A(K)$, which is non-zero.  But then $\eta \circ f=0$ for all $\sigma\in \Gamma$ and $f\in \Hom_{A\GR}(P(\sigma),M)$, which contradicts Definition~\ref{generador unital}.
	
   Conversely, consider $N\in A\GR$ and any $0\neq f\in \Hom_{A\GR}(M,N)$. Then, there exists $m\in M$ such that $f(m)\neq 0$. By hypothesis, there is a linear combination $m=\sum g_i(p_i)$ and then $0\neq f(m)=\sum (f\circ g_i)(p_i)$ which means that $f\circ g_i\neq 0$  for some $g_i \in  \HOM_A(P,M)$.
\end{proof}

\begin{corollary}\label{generador y traza}
 A module $P\in A\gr$ is a graded generator of the category $A\gr$ if and only if $Tr_M(P)=M$, for all $M\in A\gr$.
\end{corollary}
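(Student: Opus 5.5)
This is an immediate consequence of Proposition~\ref{unital genera unital}, obtained by quantifying over all $M\in A\gr$; no new argument is needed.

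By Definition~\ref{generador en A-gr}, $P\in A\gr$ is a graded generator of $A\gr$ exactly when $P$ generates every $M\in A\gr$. Every object of $A\gr$ is in particular a unital graded module, so for each fixed $M\in A\gr$ the pair $P$, $M$ satisfies the hypotheses of Proposition~\ref{unital genera unital}. That proposition then gives, for each such $M$:
\[
P \text{ generates } M \iff Tr_M(P)=M.
\]
Taking the conjunction of these equivalences over all $M\in A\gr$ yields the corollary in both directions.

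The only point to check is that the notion ``$P$ generates $M$'' in Definition~\ref{generador unital} is the one used in Definition~\ref{generador en A-gr}. That definition quantifies over all unital graded modules $N$, not only those in $A\gr$. Since Definition~\ref{generador en A-gr} refers to Definition~\ref{generador unital} verbatim, the two notions agree, and Proposition~\ref{unital genera unital} applies directly.

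Suppose instead one wished to restrict the test modules $N$ to $A\gr$. The forward direction of the proof of Proposition~\ref{unital genera unital} already uses only such an $N$: there the test module is $K/t_A(K)$, which is torsion-free. It is also unital, being a quotient of the unital module $M$. So the argument survives this variant as well, and I see no real obstacle anywhere.
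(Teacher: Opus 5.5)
Your proof is correct and matches the paper, which states this corollary without proof as the immediate consequence of Proposition~\ref{unital genera unital}, applied to each $M\in A\gr$ via Definition~\ref{generador en A-gr}. Your side remark about restricting test modules to $A\gr$ is also sound; note only that the torsion-freeness of $K/t_A(K)$ uses $A^2=A$: if $A\bar k=0$ then $Ak\subseteq t_A(K)$, so $Ak=A(Ak)=0$.
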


{\begin{remark}\label{traza sobe el anillo hace generador}
Note that,  when $A$ is idempotent and torsionfree and since all our $A$-modules are unital, it follows that $A$ is  a graded generator of the category $A\gr .$  Consequently,  in order to see  that $P\in A\gr$ is  a graded generator of the category $A\gr ,$   it is enough to check that  $Tr_A(P)=A$.
\end{remark}

 Given an arbitrary  $\sigma\in\Gamma$, we shall use   the $\sigma$-suspension functor $T_\sigma$, which is the functor $A\gr \to A\gr ,$  defined on the objects by  $T_\sigma(M)=M(\sigma)$ and which acts trivially on the morphisms.

\begin{definition} (see \cite{hazrat},\cite{Haefner94})
 Let $A, B$ idempotent graded rings. An aditive functor $F:A\gr\flechag B\gr$ is said to be a graded functor if it commutes with the $\sigma$-suspension  functor, for all $\sigma\in\Gamma$; that is,  $T_\sigma\circ F =F\circ T_\sigma$.  If, in addition, $F$ determines an equivalence of the categories $A\gr $ and $ B\gr ,$ then we say that $F:A\gr\flechag B\gr$ is a graded category equivalence. In this case we say that $A$ and $B$ are graded equivalent.
\end{definition}

 Obviously,  the  equality  $T_\sigma\circ F =F\circ T_\sigma$ means that $F(M(\sigma))= F(M)(\sigma)$ for any $M\in A\gr $.

\section{Graded equivalence}\label{Sec Morita Equiv}

 Let us recall the graded left $B$-module structure on $\HOM_A(M,N)$ for any  unital graded bimodule $_A M_B$ and   an arbitrary graded module $_A N$. Given $f\in \HOM_A(M,N)$, $m\in M$ and $b\in B$ we have,  by definition,  that $(bf)(m)=f(mb)$.  Then, in view of \eqref{eq:HOM_direct_sum_decomposition},  it is easily seen that $\HOM_A(M,N)$ is a graded left $B$-module.

We begin with the next easy fact.

\begin{lemma}\label{el HOM siempre es torsionfree}
Let $A, B$ be idempotent  graded rings, $_A M_B$ a unital graded bimodule and  $_A N$ a graded module. Then 
$\HOM_A(M,N)$ is torsion-free as a left $B$-module.
\end{lemma}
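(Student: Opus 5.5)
The plan is to check the torsion-free condition directly: take $f \in \HOM_A(M,N)$ with $Bf = 0$ and prove $f = 0$. The only property of $M$ we need is that it is unital as a right $B$-module, i.e. $MB = M$. Idempotence of $A$ and $B$ plays no role beyond the standing assumptions.

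Suppose $f\in \HOM_A(M,N)$ satisfies $bf=0$ for every $b\in B$. By the definition of the left $B$-action recalled just before the statement, this means $f(mb)=(bf)(m)=0$ for all $m\in M$ and $b\in B$. Hence $f$ vanishes on the additive subgroup generated by the products $mb$, which is $MB$. Since $_AM_B$ is unital on the right, $MB=M$, and therefore $f(m)=0$ for every $m\in M$, i.e.\ $f=0$. So $t_B(\HOM_A(M,N))=0$, which is the claim.

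The grading causes no difficulty. By \eqref{eq:HOM_direct_sum_decomposition} an element $f$ is a finite sum of homogeneous components $f_\sigma$. The argument above applies to an arbitrary $f$, not just a homogeneous one, so no componentwise reduction is needed. One should still confirm that the $B$-action is well defined on $\HOM_A(M,N)$: each $bf$ is again $A$-linear, because $M$ is a bimodule, and $b_\tau f_\sigma$ is homogeneous of the appropriate degree. The paper already records this when it states that $\HOM_A(M,N)$ is a graded left $B$-module.

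There is no real obstacle in this proof. The one point to be careful about is that the hypothesis actually used is right unitality $MB=M$, not any condition on $N$ and not torsion-freeness of $M$. Consequently the lemma holds for an arbitrary graded $A$-module $N$.
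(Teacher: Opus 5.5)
Your proof is correct and is essentially the same as the paper's: both observe that $Bf=0$ forces $f$ to vanish on $MB$, and right unitality $MB=M$ then gives $f=0$. Your side remarks are also accurate: idempotence and any condition on $N$ are not needed, and the argument requires no reduction to homogeneous components.
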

\begin{proof}
Let  $f\in \HOM_A(M,N)$. If $f\in t_B(\HOM_A(M,N))$ then 
$0=(Bf)(M)=f(MB)=f(M) ,$ so that  $f=0$.
\end{proof}

  We  proceed with a result which is essential for our purpose. Viewing $_A A_A$ as a bimodule, using the above defined  action, we obtain a graded left $A$-module structure on $\HOM_A(A,M)$.

\begin{proposition}\label{isomorfismo clave}
 Let $A$ be an idempotent  graded ring.	For any $M\in A\gr$ there is a natural graded isomorphism in $A\gr$
	\[\chi_M:M \flecha A\cdot \HOM_A(A,M),\]
	 such that  $\chi_M(m)(a)=am$ for all $a\in A$.
\end{proposition}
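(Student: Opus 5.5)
First I will check that $\chi_M$ is well defined and graded. Fix $m\in M_\sigma$ and let $\chi_M(m)\colon A\to M$ be $a\mapsto am$. This map is left $A$-linear. Since $a\in A_\tau$ gives $am\in M_{\tau\sigma}$, we get $\chi_M(m)\in \HOM_A(A,M)_\sigma$. So $\chi_M$ sends $M_\sigma$ into $\HOM_A(A,M)_\sigma$, and it is additive. Next, $\chi_M$ is $A$-linear for the left action $(bf)(a)=f(ab)$, because $\chi_M(bm)(a)=abm=\chi_M(m)(ab)=(b\chi_M(m))(a)$. Naturality is a one-line check: for $g\in\Hom_{A\gr}(M,N)$ we have $g(am)=ag(m)$, so $\chi_N\circ g=g_*\circ\chi_M$, where $g_*$ is postcomposition, which preserves $A\cdot\HOM_A(A,-)$.

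Second, I will show that the image lies in $A\cdot\HOM_A(A,M)$ and in fact equals it. Since $M$ is unital, each $m$ can be written $m=\sum b_im_i$, and then $\chi_M(m)=\sum b_i\chi_M(m_i)\in A\cdot \HOM_A(A,M)$. For the reverse inclusion, take a generator $bf$ with $b\in A$ and $f\in\HOM_A(A,M)$. For every $a\in A$ we have $(bf)(a)=f(ab)=af(b)$, because $f$ is left $A$-linear. Hence $bf=\chi_M(f(b))$. Therefore $A\cdot\HOM_A(A,M)=\chi_M(M)$, which gives surjectivity onto the stated codomain.

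Third, injectivity follows from torsion-freeness. If $\chi_M(m)=0$ then $Am=0$, so $m\in t_A(M)=0$. Thus $\chi_M$ is a graded isomorphism of degree $e$ onto $A\cdot\HOM_A(A,M)$. The codomain lies in $A\gr$: it is isomorphic to $M$, which is unital and torsion-free. Alternatively, it is unital because $A$ is idempotent, so $A\cdot(A\cdot\HOM)=A^2\cdot\HOM=A\cdot \HOM$, and it is torsion-free by Lemma~\ref{el HOM siempre es torsionfree}.

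The argument has no real obstacle. The only delicate points are the identity $(bf)(a)=af(b)$, which identifies the submodule $A\cdot\HOM_A(A,M)$ exactly, and keeping the grading conventions consistent. For the latter, I must confirm that the degree of $b\,\chi_M(m)$ matches that of $bm$ under the left module structure on $\HOM_A(A,M)$ described after \eqref{eq:HOM_direct_sum_decomposition}.
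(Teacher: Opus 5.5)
Your proposal is correct and follows essentially the same route as the paper: the same direct checks of linearity and degree, injectivity from torsion-freeness, the identity $(bf)(a)=af(b)$ giving $bf=\chi_M(f(b))$ for the image, Lemma~\ref{el HOM siempre es torsionfree} for torsion-freeness of the codomain, and the same naturality square. The degree check you leave open at the end is immediate (for $b\in A_\gamma$ and $f\in\HOM_A(A,M)_\sigma$ one gets $bf\in\HOM_A(A,M)_{\gamma\sigma}$). It is also not actually needed, since you already verified $\chi_M(M_\sigma)\subseteq\HOM_A(A,M)_\sigma$ directly.
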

\begin{proof}
 It directly follows from the above defined left $A$-action  on $\HOM_A(A,M)$ and Lemma~\ref{el HOM siempre es torsionfree} that $A\cdot \HOM_A(A,M)\in A\gr .$ Obviously,  the above defined map $\chi_M: M\flechag \HOM_A(A,M)$ is additive. Taking 
	 $a\in I$ and $m\in M,$ we see that
	\[(a\chi_M(m))(x)=\chi_M(m)(xa)=(xa)m=x(am)=\chi_M(am)(x),\]
	showing that $\chi_M$ is  left $A$-linear. 
	
 Let us check that $\chi_M(M_\sigma)\subseteq \HOM_A(A,M)_\sigma$. Taking $m\in M_\sigma$, we have 	for all $\tau\in \Gamma$ and $a\in A_\tau$  that $\chi_M(m)(a)=am\in M_{\tau\sigma},$ so that  $\chi_M(m)\in \HOM_A(A,M)_\sigma, $ proving that  $\chi_M$ is a morphism in $A\gr .$
 
  We proceed by showing that $\chi_M$ is an isomorphism. To see the injectivity, suppose that  $\chi_M(m)=0$, for some $m\in M.$  Then, for all $a\in A$ we have that $am=0$, so that $Am=0$ and, since $M\in A\gr$, it follows that $m=0$. 
  
  Let us see that $Im\,\chi_M = A\cdot \HOM_A(A,M).$ Take any $m\in M$ and write it as $m=\sum_ia_im_i$. Then $\chi_M(m)=\chi_M(\sum_ia_im_i)=\sum_ia_i\chi_M(m_i)\in A\cdot \HOM_A(A,M)$, showing that $Im\,\chi_M \subseteq A\cdot \HOM_A(A,M)$. For the converse inclusion  let $a\in A$ and $f\in \HOM_A(A,M)$. By definition, for all $x\in A$, $(af)(x)=f(xa)=x\cdot f(a)$. So that $(af)=\chi_M(f(a))\in Im\,\varphi$.	Hence  $\chi_M:M\flechag A\cdot \HOM_A(A,M)$ is an isomorphism in $A\gr$.\\
  
  It remains to check that $\chi_{-}$ is natural.  Consider $M,N\in A\gr$ and a morphism in $A\gr$, $f:M\flecha N$.
  As usual (see \cite[Section 2.4]{methods}), we set $\HOM_A(A,f)=\cdot f$; that is, for any $x\in \HOM_A(A,M)$, we have $x\cdot f=f\circ x \in \HOM_A(A,N)$, the composition. We denote by $\cdot f_|$ the restriction of $\cdot f$ to $A\cdot \HOM_A(A,M)$.
	
For any $a\in A$ and $x\in \HOM_A(A,M)$, we have that $(ax)f_|=(ax)f=a(x \cdot f)\in A\cdot \HOM_A(A,N)$; so that $Im\,(\cdot f_|)\subseteq A\cdot \HOM_A(A,N)$. Now, we have to see that the following diagram is commutative.
\begin{center}
  \begin{picture}(230,85)(0,0)
  \put(5,10){$A\cdot \HOM_A(A,M)$}
  \put(60,70){$M$}
    \put(140,10){$A\cdot \HOM_A(A,N)$}
  \put(170,70){$N$}
  \put(80,72){\vector(1,0){70}}
   \put(95,12){\vector(1,0){35}}
    \put(68,60){\vector(0,-1){35}}
    \put(170,60){\vector(0,-1){35}}
   \put(51,45){$\chi_M$}
      \put(153,45){$\chi_N$}
  \put(110,80){$f$}
  \put(105,20){$\cdot f_|$}
 \end{picture}
 \end{center} 
 
 To see it, let $m\in M$ and $a\in A$. Then,
\[(f_| \circ \chi_M(m))(a)=f(am)=af(m)=\chi_N(f(m))(a)=\left((\chi_N\circ f)(m)\right)(a).  \]  This completes our proof.
\end{proof}

\begin{lemma}\label{HOM natural} Let $A, B$ be idempotent  torsion-free graded rings, and let $F:A\gr\flechag B\gr$ be a graded category equivalence. For $M,N\in A\gr$ we have a group isomorphism
\[\HOM_A(M,N) \cong \HOM_B(F(M),F(N))\] which is graded of degree $e$ and natural  in both variables.
\end{lemma}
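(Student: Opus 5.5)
Use the decomposition \eqref{eq:HOM_direct_sum_decomposition} together with \eqref{eq:HOM_ditect_sum_components} to reduce to degree-$e$ morphisms, i.e.\ to morphisms of $A\gr$, where $F$ acts bijectively. Write, for every $\sigma\in\Gamma$,
\[\HOM_A(M,N)_\sigma=\Hom_{A\GR}(M,N(\sigma))=\Hom_{A\gr}(M,N(\sigma)).\]
The last equality holds because $N(\sigma)$ is again unital and torsion-free, since suspension does not change the underlying module. Since $F$ is an equivalence, it is full and faithful, so it induces a group isomorphism
\[F_{M,N(\sigma)}:\Hom_{A\gr}(M,N(\sigma))\flechag \Hom_{B\gr}(F(M),F(N(\sigma))).\]
Since $F$ is a graded functor, $F(N(\sigma))=F(N)(\sigma)$, so the target is $\Hom_{B\GR}(F(M),F(N)(\sigma))=\HOM_B(F(M),F(N))_\sigma$. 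Summing over $\sigma$ gives a bijection $\Phi_{M,N}=\oplus_\sigma F_{M,N(\sigma)}$. It is additive because each component is, and it sends the $\sigma$-component to the $\sigma$-component, so it is graded of degree $e$.

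Next I check that this is well defined and compatible on the nose. A homogeneous $f\in\HOM_A(M,N)_\sigma$ is the same set-map $M\to N$ viewed as a morphism $M\to N(\sigma)$, and $F(f)$ is a morphism $F(M)\to F(N(\sigma))=F(N)(\sigma)$, i.e.\ an element of degree $\sigma$ from $F(M)$ to $F(N)$. Because $T_\sigma$ acts trivially on morphisms and $T_\sigma\circ F=F\circ T_\sigma$ as functors, the underlying map of $F(f)$ does not depend on which suspension we regard $f$ as landing in. This is the point I will verify most carefully. It is needed so that, for $f$ of degree $\sigma$ and $g\colon N\to N'$ of degree $\tau$, the composite $gf$, seen as $M\to N(\sigma)\to N'(\tau\sigma)$ with $g$ regarded as $T_\sigma(g)\colon N(\sigma)\to N'(\tau)(\sigma)$, satisfies $F(gf)=F(T_\sigma g)F(f)=T_\sigma(F(g))F(f)$. (The suspension conventions may force the order $\sigma\tau$ instead of $\tau\sigma$; I will fix the convention from $N(\sigma)_\tau=N_{\tau\sigma}$.) Hence $\Phi$ is multiplicative with respect to composition of homogeneous maps of arbitrary degree.

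Naturality in both variables then follows from this compatibility. For morphisms $u\colon M'\to M$ and $v\colon N\to N'$ in $A\gr$ (degree $e$), and homogeneous $f$, I get $\Phi(v f u)=F(v)\Phi(f)F(u)$, by functoriality of $F$ on $\Hom_{A\gr}(M',N'(\sigma))$ together with the identity $F(T_\sigma v)=T_\sigma F(v)$. This extends additively to all of $\HOM_A(M,N)$. I expect the main obstacle to be a bookkeeping one rather than a conceptual one: interpreting the equality $T_\sigma\circ F=F\circ T_\sigma$ strictly, so that $F$ of the suspended morphism $T_\sigma(v)$ equals $F(v)$ as a map. If only a natural isomorphism $F\circ T_\sigma\cong T_\sigma\circ F$ were available, I would instead compose with these isomorphisms. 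Naturality would then rely on their coherence, which the paper's strict definition lets us avoid.
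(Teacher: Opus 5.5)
Your proof is correct and is essentially the paper's. The paper also identifies each homogeneous component through \eqref{eq:HOM_ditect_sum_components}, applies full faithfulness of $F$ together with the strict equality $F\circ T_\sigma=T_\sigma\circ F$, and takes the direct sum over $\sigma$; it only suspends the source, via $\Hom_{A\gr}(M(\sigma^{-1}),N)$, where you suspend the target, and it leaves naturality as ``easily verified''. Your additional check of compatibility with composition of homogeneous maps is exactly what the paper needs right afterwards to obtain the graded ring isomorphism $\END(_AA)\cong\END(_BF(A))$; with the convention $N(\sigma)_\tau=N_{\tau\sigma}$ one has $N'(\tau)(\sigma)=N'(\sigma\tau)$, so the composite has degree $\sigma\tau$.
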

\begin{proof}
For any $\sigma\in \Gamma$
 \begin{eqnarray*}
  \HOM_A(M,N)_\sigma&=&\Hom_{A\gr}(M(\sigma^{-1}),N) \cong\Hom_{B\gr}(F(M(\sigma^{-1})),F(N))\cong\\
  &\cong& \Hom_{B\gr}(F(M)(\sigma^{-1}),F(N))=\HOM_B(F(M),F(N))_\sigma ,
 \end{eqnarray*}
 giving an isomorphism. The naturality is easily verified.
\end{proof}

Let $A, B$ be idempotent  torsion-free graded rings. It is known that there is a graded ring homomorphism $\alpha: A\flecha \END(_AA)$ via the right multiplication by the elements of $A$; in fact, $\alpha(A)$ is a right ideal of $\END(_AA)$. On the other hand, by the previous lemma, there is a graded ring isomorphism
\[\END(_AA)=\HOM_A(A,A)\cong_{gr} \HOM_B(F(A),F(A))=\END(_B F(A)).\]

As usual, we denote this isomorphism by $F: \END(_AA)\flecha \END(_B F(A))$. We write $\bar\alpha=F \circ \alpha$; that is, for any $a\in A$, we have $\bar\alpha(a)=F(\alpha(a))$.

Now, writing endomorphisms acting as opposite scalars, we may endow $F(A)$ with a structure of a graded right $\END(_A A)$-module; that is, for any $x\in F(A)$ and $f \in \END(_A A)$ we set $x \cdot f = x F(f).$ In particular,
$F(A)$ is a graded right  
$A$-module such that for any $x\in F(A)$ we  have  $xa=x\bar\alpha(a)$.  Analogously, we obtain  $\beta: B\flecha \END(_B B)$ and $G :  \END(_B B)\flecha \END(_A G(B))$; so that $G(B)$ has a structure of a graded right $\END(_B B)$-module, in particular, that of a graded right $B$-module.

Let us see these  actions with more details.
\begin{remark}\label{Q por I}
We keep the above notation; that is, for any $x\in F(A)$ and $a\in A$, we have $xa=x\bar\alpha(a)$. We consider the exact sequence $A \flechadecor{\alpha(a)} A\alpha(a)\flecha 0$. Now, apply the functor $F$ to obtain the exact sequence $F(A) \flechadecor{F(\alpha(a))} F(A\alpha(a))\flecha 0$.

This means that $F(A)F(\alpha(a))= F(A\alpha(a))$.  Since $\bar\alpha(a)=F(\alpha(a))$, we have that $F(A)\bar\alpha(a)= F(A\alpha(a))$.

Analogously, for any $y\in G(B)$ and $b\in B$, we have $yb=y\bar\beta(b)$,  and since  $\bar\beta(b)=G(\beta(b))$, we obtain $G(B)\bar\beta(b)= G(B\beta(b))$.

In general, for any $x\in \END(_A A)$ and $y\in \END(_B B)$ we have that $F(A)F(x)=F(Ax)$ and $G(B)G(y)=G(By)$. We shall simplify this notation by writing   $F(A) \cdot x =  F(A)F(x)= F(Ax)$ and $G(B) \cdot y = G(B)G(y)=G(By).$ 
\end{remark}

As a consequence of this remark, we may see that $F(A)$ is right $A$-torsion-free and $G(B)$ is right $B$-torsion-free. Let us  check the first assertion. Since $A$ is  right $A$-torsion-free, then $\bigcap_{a\in A} \ker(\alpha(a))=0$. By the remark above, one has that $F(\ker(\alpha(a)))=\ker(F(\alpha(a)))$. Set $L=\bigcap_{a\in A} \ker(F(\alpha(a)))$. Suppose that $L\neq 0$ and consider, for every $a\in A$ the sequence $0\flecha L\flecha \ker(F(\alpha(a)))=F(\ker(\alpha(a)))$. Then $G(L)\neq 0$ and we have an exact sequence $0\flecha G(L)\flecha \ker(\alpha(a))$, for all $a\in A,$ which is impossible.

The following result shows us that, in fact, $F(A)\in \grd A$ and $G(B)\in \grd B$.

\begin{lemma}\label{PJ es P} 
In the setting described above, define $P=G(B)$ and $Q=F(A)$ with the right  actions  via change of rings: $xa=x\bar\alpha(a)$ and $yb=y\bar\beta(b)$, for $x\in Q$, $a\in A$, $y\in P$ and $b\in B$. Then $P=PB$ and $Q=QA$.

Moreover, if   $J\subseteq \END(_B B)$ is a graded two-sided ideal such that  $P \cdot J=P$ then  $\beta(B)\subseteq J$. Analogously, if  $I\subseteq \END(_A A)$ is a graded two-sided ideal such that   $Q \cdot I=Q$ then $\alpha(A)\subseteq I$.
\end{lemma}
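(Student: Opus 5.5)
We prove the assertions for $Q=F(A)$ and $I\subseteq \END(_AA)$; the ones for $P=G(B)$ and $J$ follow by the symmetric argument with $G$, $\beta$, $\bar\beta$. The tool is Remark~\ref{Q por I}: for $x\in\END(_AA)$ we have $F(A)\cdot x=F(Ax)$, i.e.\ the image of $F(x)$ is $F$ applied to the image of $x$. We pair it with the fact that an equivalence preserves images, sums of subobjects and the lattice of subobjects, when these are computed inside the abelian-like setting of $A\gr$.

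\emph{Step 1: $Q=QA$.} By definition $QA=\sum_{a\in A} F(A)\bar\alpha(a)=\sum_{a\in A}F(A\alpha(a))=\sum_{a} F(Aa)$, all viewed as subobjects of $F(A)$.

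In $A\gr$ the subobjects $Aa\subseteq A$ satisfy $\sum_a Aa=A^2=A$, since $A$ is idempotent. This sum is the image of the epimorphism $\bigoplus_a A\to A$ whose components are the maps $\alpha(a)$. The category $A\gr$ is closed under direct sums, and epimorphisms there with torsion-free target are surjective. Applying $F$, we get a morphism $\bigoplus_a F(A)\to F(A)$ with components $F(\alpha(a))$; it is still an epimorphism because $F$ is an equivalence. Its image is $\sum_a F(A)\bar\alpha(a)=QA$.

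The key point is that an epimorphism in $B\gr$ is surjective. If $g:X\to Y$ is not onto, consider $Y/\mathrm{Im}\,g$. It is unital, and it is nonzero modulo its torsion part, since otherwise $BY\subseteq \mathrm{Im}\,g$ and hence $Y=\mathrm{Im}\,g$. So $Y\to (Y/\mathrm{Im}\,g)/t_B$ is a nonzero morphism killing $g$, which contradicts $g$ being an epimorphism. This is the same argument as in Proposition~\ref{unital genera unital}. Hence $QA=Q$.

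\emph{Step 2: the ideal statement.} Let $I$ be a graded two-sided ideal of $\END(_AA)$ with $Q\cdot I=Q$. Then, as in Step 1, the morphism $\bigoplus_{x\in I}F(A)\to F(A)$ with components $F(x)$ is surjective. Applying the inverse equivalence $G$ (up to the natural isomorphism $GF\cong 1$), the morphism $\bigoplus_{x\in I}A\to A$ with components $x$ is an epimorphism in $A\gr$, hence surjective by the argument above. So $A=\sum_{x\in I}Ax$.

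Now fix $a\in A$. By idempotence write $a=\sum_j c_jd_j$ with $c_j,d_j\in A$. By the previous paragraph each $c_j=\sum_k y_{jk}x_{jk}$ with $x_{jk}\in I$ and $y_{jk}\in A$. Then, for every $z\in A$,
\[
z\alpha(a)=\sum_j (zc_j)d_j=\sum_{j,k}\big(z y_{jk}\big)x_{jk}\,d_j .
\]
Thus $\alpha(a)=\sum_{j,k}\alpha(y_{jk})\,x_{jk}\,\alpha(d_j)$ in $\END(_AA)$, where endomorphisms compose left to right since they act on the right. Each summand lies in $I$ because $I$ is a two-sided ideal. Hence $\alpha(A)\subseteq I$.

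\emph{Main obstacle.} The delicate point is the transfer of ``the image is everything'' across $F$ and $G$. Since $A\gr$ is not abelian (it is not closed under quotients), we must phrase surjectivity as being an epimorphism within $A\gr$, and verify that epimorphisms there are exactly the surjections. We also need $A\gr$ to be closed under the relevant direct sums, which holds for unital torsion-free modules. Once this characterization is established, everything else is formal.
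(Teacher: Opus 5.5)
Your proof is correct and follows the paper's route. You use Remark~\ref{Q por I} to get $QA=F(A\alpha(A))=F(A)=Q$, transfer $Q\cdot I=Q$ back through the equivalence to get $A=AI$, and then write $\alpha(a)$ as a sum of products with a factor in $I$. Your argument that epimorphisms in $A\gr$ are surjective, combined with closure of $A\gr$ under direct sums, simply makes rigorous the image-transfer step the paper handles informally. For this to work, index the components by homogeneous $a$ (resp.\ homogeneous $x\in I$) with suspended domains $A(\deg(a)^{-1})$, so that they are degree-$e$ morphisms and $F\circ T_\sigma=T_\sigma\circ F$ applies.

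The detour through $a=\sum_j c_jd_j$ in Step 2 is unnecessary. Writing $a=\sum_k y_kx_k$ directly, with $y_k\in A$ and $x_k\in I$, already gives $\alpha(a)=\sum_k\alpha(y_k)x_k\in I$, which is exactly the paper's computation.
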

\begin{proof}
With the notation in Remark~\ref{Q por I}, since $A$ and $B$ are idempotent graded rings, we have that $A=A\alpha(A)$ and $B=B\beta(B)$.  By the same remark, we obtain
\[PB=G(B)B=G(B)\bar\beta(B)=G(B\beta(B))=G(B)=P\]
and, analogously, $QA=Q$.

 For the second part take a two-sided ideal $J \subseteq \END(_B B)$ such that  $P \cdot J=P$.  We have, by Remark~\ref{Q por I}, that
\[G(B)=P=P \cdot J=G(B)G(J)=G(BJ).\]
By applying $F$ we get $B=BJ$. We want to see that $\beta(B)\subseteq J$. Take any $b\in B$, so it may be writen as $b=\sum_{i=1}^n b_il_i=\sum_{i=1}^n l_i(b_i)$, where $b_i\in B$ and $l_i\in J$. Consider $\sum_{i=1}^n \beta(b_i)l_i$ as a sum of elements in $\END(_B B)$ and take any $x\in B$. Then $x\sum_{i=1}^n \beta(b_i)l_i= \sum_{i=1}^n x\beta(b_i)l_i=\sum_{i=1}^nl_i(xb_i)=x\sum_{i=1}^nl_i(b_i)=xb= x \beta(b) $; so that $\beta(b)=\sum_{i=1}^n \beta(b_i)l_i\in J$. The other case is completely analogous.
\end{proof}

Sumarizing, we have the following result.

\begin{proposition}\label{P y Q son bimod sin torsion}
Let $A, B$ be idempotent  torsion-free graded rings, and let $F:A\gr\flechag B\gr$ and  $G:B\gr\flechag A\gr$ be inverse graded category equivalences. We set, as above $P=G(B)$ and $Q=F(A)$. Then $_AP_B$ and $_BQ_A$ have structures of graded bimodules and, moreover,
 \begin{itemize}
 \item[(i)]  $_AP\in A\gr$ and $P_B\in \grd B$.
  \item[(ii)] $_BQ\in B\gr$ and $Q_A\in \grd A$.
 \end{itemize}
\end{proposition}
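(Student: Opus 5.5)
The plan is to assemble the statement from the facts already established in this section, checking separately the bimodule structure, the left properties and the right properties. I will do everything for $Q=F(A)$; the case of $P=G(B)$ is symmetric, with $G$, $\beta$ and $\bar\beta$ in place of $F$, $\alpha$ and $\bar\alpha$.

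\emph{Bimodule structure.} First, $Q=F(A)$ is an object of $B\gr$, since $F$ takes values in $B\gr$. Hence $_BQ$ is a unital, torsion-free graded left $B$-module, and the left half of (ii) is immediate.

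The right action is $xa=x\bar\alpha(a)$, where $\bar\alpha=F\circ\alpha$ and $F:\END(_AA)\flecha\END(_BF(A))$ is the graded ring isomorphism from Lemma~\ref{HOM natural}. I need to check three things.
\begin{itemize}
\item[(1)] This is a right module structure. Since $\alpha$ is a ring homomorphism and $F$ is a ring isomorphism, with endomorphisms written on the right, $\bar\alpha$ is a ring homomorphism.
\item[(2)] It commutes with the left $B$-action. Each $\bar\alpha(a)$ is a $B$-linear endomorphism of $Q$, so $(bx)a=b(xa)$.
\item[(3)] It is compatible with the gradings. $\alpha$ sends $A_\sigma$ into $\END(_AA)_\sigma$, and $F$ is graded of degree $e$, so $\bar\alpha(a)\in\END(_BQ)_\sigma$ for $a\in A_\sigma$. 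For the right-operator convention this gives $Q_\tau A_\sigma\subseteq Q_{\tau\sigma}$.
\end{itemize}
I will state these as routine verifications. The only point needing care is matching the degree conventions of (\ref{eq:HOM_ditect_sum_components}) for left modules with endomorphisms acting on the right.

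\emph{Right properties of $Q_A$.} Unitality $QA=Q$ is exactly the first part of Lemma~\ref{PJ es P}. Right torsion-freeness is the argument given just before Lemma~\ref{PJ es P}, which I will recall and tighten. If $x\in Q$ with $xA=0$, then $x\in L=\bigcap_{a\in A}\ker F(\alpha(a))$. By Remark~\ref{Q por I}, $F$ preserves the relevant exact sequences, so $\ker F(\alpha(a))=F(\ker\alpha(a))$ for each $a$.

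The step I expect to be delicate is the passage from $L\neq0$ to a contradiction. We have $L\neq 0$ and $L\hookrightarrow F(\ker\alpha(a))$ for every $a$. Applying $G$, whose exactness on monomorphisms follows from being an equivalence, gives $0\neq G(L)\subseteq \ker\alpha(a)$ for all $a$. Hence $G(L)\subseteq r(A)=0$, a contradiction.

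The issue is that $A\gr$ is not abelian, so "exact" must be understood categorically. I will justify the key facts directly from the equivalence:
\begin{itemize}
\item A morphism $h$ in $A\gr$ with $h\neq0$ is mapped to $F(h)\neq 0$.
\item Monomorphisms correspond under $F$ and $G$.
\item Inclusions of submodules lying in $A\gr$ are monomorphisms.
\end{itemize}
Note that $L$ lies in $B\gr$, because submodules of torsion-free modules are torsion-free. $L$ is unital after replacing it by $BL$, which is nonzero since $Q$ is torsion-free. With these facts, applying $G$ to $BL\hookrightarrow Q$ produces a nonzero morphism into $A$ that factors through every $\ker\alpha(a)$, and this yields the contradiction.

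This establishes (ii), and symmetrically (i).
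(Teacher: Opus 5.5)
Your proposal is correct and follows essentially the paper's route. The left properties come from $Q=F(A)\in B\gr$, the right action comes from $\bar\alpha=F\circ\alpha$, unitality $QA=Q$ is Lemma~\ref{PJ es P}, and right torsion-freeness is the argument given just before that lemma. Your tightening of the last step is sound: you pass to $BL\in B\gr$ with $\iota\colon BL\hookrightarrow Q$, and use faithfulness of $G$ together with naturality of $\eta\colon GF\to 1$, so that $\alpha(a)\circ\eta_A\circ G(\iota)=\eta_{A(\sigma)}\circ G(F(\alpha(a))\circ\iota)=0$ for homogeneous $a\in A_\sigma$. This avoids the paper's literal use of $F(\ker\alpha(a))$, where $\ker\alpha(a)$ need not be unital.
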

%\red{\begin{proof}
% Immediate from the above results.
%\end{proof}}

An interesting consequence of Proposition~\ref{P y Q son bimod sin torsion}  is that, for any $N\in B\gr$  we have a structure of a graded left $A$-module on   $\HOM_B(F(A),N)$ via $(af)(x)=f(xa)$, with $a\in A$ and $x\in F(A),$ and there is  a similar  structure of a left $B$-module on $\HOM_A(G(B),M)$. This will be usefull to prove the following result.

\begin{lemma}\label{F conmuta con A y el otro}
In the setting of Proposition~\ref{P y Q son bimod sin torsion},  the following equalities hold:
\begin{itemize}
\item[(i)] For any $M\in A\gr$,  we have that
$$F\left(A\cdot \HOM_A(A,M)\right)=A\cdot {F\left(\HOM_A(A,M)\right)}=A\cdot \HOM_B(Q,F(M)).$$
\item[(ii)] For any $N\in B\gr$,  we have that
$$G\left(B\cdot \HOM_B(B,N)\right)=B\cdot { G\left(\HOM_B(B,N)\right)}=B\cdot \HOM_A(P,G(N)).$$
\end{itemize}
\end{lemma}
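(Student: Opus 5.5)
The approach is to read the symbol $F$ applied to $\HOM_A(A,M)$ as the degree-$e$ group isomorphism $\HOM_A(A,M)\cong\HOM_B(F(A),F(M))=\HOM_B(Q,F(M))$ of Lemma~\ref{HOM natural}. Under this reading, both equalities in (i) follow once we show that this isomorphism is left $A$-linear for the relevant module structures. On $\HOM_A(A,M)$ the action is $(ag)(x)=g(xa)$, i.e.\ $ag=g\circ\alpha(a)$. On $\HOM_B(Q,F(M))$ the action is the one described after Proposition~\ref{P y Q son bimod sin torsion}: $(af)(y)=f(ya)=f(y\bar\alpha(a))$, i.e.\ $af=f\circ F(\alpha(a))$. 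Part (ii) is the same argument with the roles of $A,F,\alpha$ and $B,G,\beta$ interchanged.

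The key step is that the map of Lemma~\ref{HOM natural} is compatible with composition of graded homomorphisms of arbitrary degrees:
$$F(g\circ h)=F(g)\circ F(h)$$
for homogeneous $g\in\HOM_A(N,M)_\sigma$ and $h\in\HOM_A(L,N)_\tau$. To prove it, view $h$ as a morphism $L(\tau^{-1})\to N$ in $A\gr$ and $g$ as a morphism $N(\sigma^{-1})\to M$ in $A\gr$, using \eqref{eq:HOM_ditect_sum_components}. Then $g\circ h$, which has degree $\tau\sigma$, is the composite in $A\gr$ of $h$ regarded as a morphism $L(\tau^{-1})(\sigma^{-1})=L((\tau\sigma)^{-1})\to N(\sigma^{-1})$ with $g$; here we use that suspension acts trivially on morphisms. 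Since $F$ is a functor and $F\circ T_\sigma=T_\sigma\circ F$ with $T_\sigma$ trivial on morphisms, applying $F$ and reading the result back through \eqref{eq:HOM_ditect_sum_components} gives the displayed identity for homogeneous maps. It then extends to arbitrary elements by additivity via \eqref{eq:HOM_direct_sum_decomposition}. This bookkeeping with suspensions, and checking that the identifications in Lemma~\ref{HOM natural} are precisely the ones induced by $F$ on morphisms, is the step I expect to be the main obstacle.

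Granting the key step, for $a\in A$ and $g\in\HOM_A(A,M)$ we get
$$F(ag)=F(g\circ\alpha(a))=F(g)\circ F(\alpha(a))=F(g)\circ\bar\alpha(a)=a\,F(g),$$
where the last equality is the $A$-action on $\HOM_B(Q,F(M))$ induced by the right $A$-structure $ya=y\bar\alpha(a)$ of $Q$ (Lemma~\ref{PJ es P}). Hence the image of the submodule $A\cdot\HOM_A(A,M)$ is $A\cdot F(\HOM_A(A,M))$, which is the first equality. Since the map of Lemma~\ref{HOM natural} is bijective onto $\HOM_B(Q,F(M))$, we have $F(\HOM_A(A,M))=\HOM_B(Q,F(M))$, which gives the second equality.
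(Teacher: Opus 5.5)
Your proposal is correct and follows the paper's proof. Both arguments write $ag=g\circ\alpha(a)$, apply $F$ to get $F(ag)=F(g)\circ F(\alpha(a))=F(g)\circ\bar\alpha(a)=aF(g)$, and read off the equalities. The only difference is that you explicitly check, using the suspension identifications and $F\circ T_\sigma=T_\sigma\circ F$, that the isomorphism of Lemma~\ref{HOM natural} respects composition of maps of arbitrary degree; the paper uses this step without comment.
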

\begin{proof}
(i) Consider $a\in A$ and $f\in   \HOM_A(A,M)$. We know that we may write $af$ as the composition $f\circ \alpha(a):A\flecha A\flecha M$. By applying $F$ we get 
$$F(af)=F(f\circ \alpha(a))=F(f)\circ F(\alpha(a)):F(A)\flecha F(A)\flecha F(M);$$
so that, for any $x\in F(A)$ we have
$$x \cdot F(af)=x\cdot\left(F(\alpha(a))F(f)\right)=xF(\alpha(a))\cdot F(f)=(xa)F(f)=x\left(aF(f)\right).$$
% $x\mapsto x F(\alpha(a))\mapsto x F(\alpha(a))F(f)$. As we have seen in  Remark~\ref{Q por I} we have that $xa=x F(\alpha(a))=F(\alpha(a))(x)$; so that $ x F(\alpha(a))F(f)=F(f)(xa)=\left(aF(f)\right)(x)$. 
This means that $F(af)=aF(f),$  showing  (i).

The proof of (ii) is completely analogous.
\end{proof}

\begin{remark}\label{rem:f_commuta_con_a} Note that in the proof of Lemma~\ref{F conmuta con A y el otro}  we showed that $F(af)=aF(f),$ for any $a\in A$ and $f\in   \HOM_A(A,M)$. Similarly, $G(bg)=bG(g)$ for all $b\in B$ and $g \in    \HOM_B(B,N)$.   
\end{remark}

 Observe that Lemma~\ref{F conmuta con A y el otro} implies that the isomorphism  
$$F: \HOM_A(A,M) \to \HOM_B(F(A),F(M))$$ restricts to an isomorphism 
$$ A\cdot \HOM_A(A,M) \to 
A\cdot \HOM_B(F(A),F(M))$$ for any 
 $M\in A\gr$.

 Note also that by Proposition~\ref{isomorfismo clave}, for any $N\in B\gr ,$   there is a natural graded isomorphism of graded left $A$-modules, $$G(N) \cong A \cdot \HOM_A(A,G(N)). $$ Applying $F$  to $A \cdot \HOM_A(A,G(N))$ we obtain by Lemma~\ref{HOM natural} a natural isomorphism 
$$G(N) \cong F(A \cdot \HOM_A(A,G(N))) ,$$ which by Lemma~\ref{F conmuta con A y el otro} results in a   graded isomorphism of graded left $A$-modules, $$	G(N)\cong_{gr-nat} A\cdot \HOM_B(F(A),N)=A\cdot \HOM_B(Q,N),$$ that is natural in $N.$ Analogoulsy, 
there is a natural graded isomorphism of graded left $B$-modules,
$$	F(M)\cong_{gr-nat} B\cdot\HOM_A(G(B),M)=B\cdot\HOM_A(P,M).$$
for any  $M\in A\gr$.

As a direct consequence, we have that following result.

\begin{proposition}\label{F y G iso a los HOM}
Let $A, B$ be idempotent and torsion-free graded rings, and let $F:A\gr\flechag B\gr$ and  $G:B\gr\flechag A\gr$ be inverse graded category equivalences. Consider the setting described in paragraphs above with $P=G(B)$ and $Q=F(A)$. Then there are natural graded isomorphisms
\begin{eqnarray*}
	G&\cong_{gr-nat}& A\cdot \HOM_B(F(A),-)=A\cdot \HOM_B(Q,-),\\ \nonumber
	F&\cong_{gr-nat}& B\cdot\HOM_A(G(B),-)=B\cdot\HOM_A(P,-).
\end{eqnarray*}
\end{proposition}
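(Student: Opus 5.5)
The proof assembles the chain of natural isomorphisms from the paragraphs preceding the statement, checking at each link that it is graded of degree $e$, $A$-linear (resp.\ $B$-linear), and natural. I treat $G$; the case of $F$ is symmetric, exchanging the roles of $A,B$, $F,G$, $P,Q$, and $\alpha,\beta$.

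Fix $N\in B\gr$. First, Proposition~\ref{isomorfismo clave} applied to $G(N)\in A\gr$ gives a natural graded isomorphism $\chi_{G(N)}\colon G(N)\to A\cdot\HOM_A(A,G(N))$. Second, Lemma~\ref{HOM natural} gives a degree-$e$ group isomorphism $\HOM_A(A,G(N))\to\HOM_B(F(A),FG(N))$, $f\mapsto F(f)$, natural in $N$. By Remark~\ref{rem:f_commuta_con_a} it satisfies $F(af)=aF(f)$, so it is $A$-linear for the module structure on $\HOM_B(Q,-)$ described before Lemma~\ref{F conmuta con A y el otro}. Hence it restricts to a graded $A$-isomorphism
\[A\cdot\HOM_A(A,G(N))\to A\cdot\HOM_B(Q,FG(N)).\]
Third, let $\eta\colon FG\to 1_{B\gr}$ be the natural isomorphism coming from the equivalence. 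Since $F$ and $G$ are graded functors, $\eta_N$ may be taken to be a morphism in $B\gr$, i.e.\ of degree $e$. Composition with $\eta_N$ gives a degree-$e$ isomorphism $\HOM_B(Q,FG(N))\to\HOM_B(Q,N)$. It commutes with the left $A$-action, because that action is precomposition with right multiplication on $Q$ while $\eta_N$ acts by postcomposition. So it restricts to an isomorphism $A\cdot\HOM_B(Q,FG(N))\cong A\cdot\HOM_B(Q,N)$. Composing the three links gives $G(N)\cong A\cdot\HOM_B(Q,N)$, and each link is graded, $A$-linear and natural in $N$.

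Naturality of the composite is the step I expect to need the most care. I would check it by taking a morphism $g\colon N\to N'$ in $B\gr$ and chasing an element $x\in G(N)$ around the square. The key identity is $\eta_{N'}\circ FG(g)=g\circ\eta_N$, which holds by naturality of $\eta$. The first link is natural by Proposition~\ref{isomorfismo clave}, and the second by Lemma~\ref{HOM natural}. The grading bookkeeping should be routine: every map involved is either of degree $e$ or a restriction of a degree-$e$ map to the graded submodule $A\cdot\HOM$, which is graded because $A$ is graded and by \eqref{eq:HOM_direct_sum_decomposition}.

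Finally, I would note that $A\cdot\HOM_B(Q,-)$ really is a functor $B\gr\to A\gr$. Unitality is by construction. Torsion-freeness holds by Lemma~\ref{el HOM siempre es torsionfree}, using that $Q$ is a unital bimodule by Proposition~\ref{P y Q son bimod sin torsion}. This makes both sides of the claimed isomorphism functors with the same domain and codomain, so the statement is meaningful.
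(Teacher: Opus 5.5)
Your proposal is correct and follows essentially the same route as the paper: $\chi_{G(N)}$ from Proposition~\ref{isomorfismo clave}, then $F$ on $\HOM$ restricted to the $A\cdot\HOM$ parts using $F(af)=aF(f)$ (Lemma~\ref{F conmuta con A y el otro}), then the identification $FG(N)\cong N$. You spell out two things the paper leaves implicit: that last identification, done by postcomposing with the natural isomorphism $FG\cong 1_{B\gr}$ (together with its $A$-linearity), and the naturality chase using $\eta_{N'}\circ FG(g)=g\circ\eta_N$; the paper passes directly from $FG(N)$ to $N$ without comment.
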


\section{From graded category equivalences to graded Morita contexts.}\label{ SecMorita contexts}

Let $A, B$ be idempotent  torsion-free graded rings. We keep all notation from last section. Throughout this section, we fix natural graded isomorphisms $\eta:G\circ F\flecha 1_{A\gr}$ 
and $\gamma :F\circ G\flecha 1_{B\gr}$.
 For any $M,N\in A\gr$ the graded isomorphism $\HOM_A(G(F(M)),G(F(N))) \cong \HOM_A(M,N)$ 
induced by $\eta $ is obtained by 
\begin{equation}\label{eq:eta_iso} FG(g) \mapsto \eta \circ FG(g)\circ \eta ^{-1} =g,\end{equation} $g \in \HOM_A(M,N).$ With a slight abuse of notation, denote  by 
$G^{-1}$ the inverse isomorphism of $G : \END (_B B)\to \END (_A P) .$  Then $FG (G^{-1}f )= F(f)$ for all $f \in \END (_A P)$ and, applying \eqref{eq:eta_iso} to $g = G^{-1}(f),$ we obtain 
\begin{equation}\label{eq:F_Gorro}
  G ^{-1}(f) = \eta \circ F(f) \circ \eta ^{-1}.
\end{equation}  Write $\hat{F} : =\eta \circ F(-) \circ \eta ^{-1}.$ 

%We will write the same for $\eta'$.  
From the results above, we define a natural graded isomorphism of graded left $A$-modules, and, respectively, a natural graded isomorphism of graded left $B$-modules
\begin{align}\label{def del rho y psi}
  \rho:P\flecha A\cdot \HOM_B(Q,B)\quad\text{and}\quad \psi:Q\flecha B\cdot \HOM_A(P,A)
\end{align}
as follows.

By Proposition~\ref{isomorfismo clave} we have a graded  natural isomorphism $\chi_P:P\flecha A\cdot \HOM_A(A,P)$ and by  Lemmas~\ref{HOM natural} and \ref{F conmuta con A y el otro}, a graded isomorphism
$$\bar F:A\cdot \HOM_A(A,P)
\flechadecor{F} A\cdot \HOM_B(F(A),F(P))
\overset{\gamma \circ - }{\cong} A\cdot \HOM_B(Q,B);$$ 
that is, for $p\in P$ and $q\in Q$, $\rho(p)(q)=\bar F(\chi_P(p))(q)$. Analogously,  for $p\in P$ and $q\in Q$, $\psi_q(p)=\bar G(\chi_Q(q))(p).$ 
%\red{ where $\chi_Q:Q\flecha B\cdot \HOM_B(B,Q)$.} To see that $\rho $ is a left $A$-module map, take $a\in A, f\in \HOM_A(A,P)$ and $q \in Q.$ Then, using Remark~\ref{rem:f_commuta_con_a}, we see that 
\begin{align*}\rho (ap )(q)&= \bar{F}(\chi_Q (ap))(q)= \bar{F}(a\chi_Q (p))(q)= \gamma (F(a\chi_Q (p))(q) )\\&\overset{Remark\, \ref{rem:f_commuta_con_a}}{=} \gamma (a F(\chi_Q (p))(q) )= \gamma ( F(\chi_Q (p))(qa) ) = a\gamma ( F(\chi_Q (p))(q) )= a\rho (p )(q).
\end{align*}  A similar computation shows that $\psi$ is a left $B$-module map.

We want to see that, in fact, $\rho$ and $\psi$ are isomorphisms of \textit{bimodules}.  First, let us recall the structure of a right $B$-module of $ A\cdot \HOM_B(Q,B)$: for $f\in  A\cdot \HOM_B(Q,B)$, $q\in Q$ and $b \in B$ we have $(fb)(q)=f(q)b$. Clearly, $A\cdot \HOM_B(Q,B)$ is an $\bimod{A}{B}$-bimodule. Similarly, the right $A$-module structure on $B\cdot \HOM_A(P,A)$ is given by setting   $(ga)(p)=g(p)a$ for $g\in B\cdot \HOM_A(P,A),$ $p\in P$ and $a\in A,$ transforming  $B\cdot \HOM_A(P,A)$ into a $\bimod{B}{A}$-bimodule.

\begin{lemma}\label{lema del rho y el psi}
Let $A, B$ be idempotent and torsion-free graded rings, and let $F:A\gr\flechag B\gr$ and  $G:B\gr\flechag A\gr$ be inverse graded category equivalences. Consider the setting described in Section~\ref{Sec Morita Equiv}, with $P=G(B)$ and $Q=F(A)$ and the homomorphisms defined  in \eqref{def del rho y psi}
\[ \rho:P\flecha A\cdot \HOM_B(Q,B)\quad\text{and}\quad \psi:Q\flecha B\cdot \HOM_A(P,A).\]

Then $\rho$ is an $\bimod{A}{B}$-bimodule graded isomorphism while $\psi$ is a $\bimod{B}{A}$-bimodule graded isomorphism.
\end{lemma}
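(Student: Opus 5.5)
Most of the statement is already in hand. The map $\rho$ is the composite of $\chi_P$ (Proposition~\ref{isomorfismo clave}), the restriction of $F$ furnished by Lemmas~\ref{HOM natural} and \ref{F conmuta con A y el otro}, and postcomposition with $\gamma_B$. Each factor is a graded isomorphism of degree $e$, so $\rho$ is a bijective graded map of degree $e$. Left $A$-linearity was checked in the computation just before the statement, using Remark~\ref{rem:f_commuta_con_a}. So the plan reduces to proving right $B$-linearity: for $p\in P$, $b\in B$ and $q\in Q$,
\[\rho(pb)(q)=\rho(p)(q)\,b .\]
The case of $\psi$ is symmetric, with the roles of $F,G$ and $A,B$ exchanged, so I treat only $\rho$.

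\textbf{Step 1: unwind $pb$.} By definition $pb=p\,G(\beta(b))$, where $\beta(b)\in\END(_BB)$ is right multiplication by $b$. Hence $\chi_P(pb)=G(\beta(b))\circ\chi_P(p)$, since for $a\in A$ we have $\chi_P(pb)(a)=a\,p\,G(\beta(b))$, and $G(\beta(b))$ is left $A$-linear. Applying $F$ gives
\[F(\chi_P(pb))=FG(\beta(b))\circ F(\chi_P(p)).\]

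\textbf{Step 2: use naturality of $\gamma$.} The isomorphism $\gamma:FG\to 1_{B\gr}$ is natural, so $\gamma_B\circ FG(\beta(b))=\beta(b)\circ\gamma_B$. This is the $B$-analogue of \eqref{eq:F_Gorro}. Therefore
\[\rho(pb)(q)=\gamma_B\bigl(FG(\beta(b))(F(\chi_P(p))(q))\bigr)=\beta(b)\bigl(\gamma_B(F(\chi_P(p))(q))\bigr)=\rho(p)(q)\,b,\]
because $\beta(b)$ acts as right multiplication by $b$. This is exactly right $B$-linearity.

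\textbf{Step 3: check the image and conclude.} Right $B$-linearity shows that $\rho(p)b=\rho(pb)$ lies in $A\cdot\HOM_B(Q,B)$, so the right $B$-structure on the target is respected. Combining this with left $A$-linearity and bijectivity, $\rho$ is a graded bimodule isomorphism. For $\psi$ the same argument works with $\eta$ in place of $\gamma$, $\chi_Q$ in place of $\chi_P$, and $qa=q\,F(\alpha(a))$, using the naturality $\eta_A\circ GF(\alpha(a))=\alpha(a)\circ\eta_A$.

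The main obstacle is bookkeeping rather than any substantive difficulty. One must keep track of which side the endomorphisms act on, since the paper writes maps on the side opposite the scalars. One must also make sure that the identification $F(P)=FG(B)\cong B$ used in defining $\rho$ is precisely $\gamma_B$, so that its naturality square against $\beta(b)$ commutes. If the conventions clash, I would recheck the definition of $\bar F$ so that the same component $\gamma_B$ appears at both ends of the computation.
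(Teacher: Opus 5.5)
Your proposal is correct and is essentially the paper's proof. The paper checks $\psi(qa)=\psi(q)a$ by writing $\chi_Q(qa)=F(\alpha(a))\circ\chi_Q(q)$ (naturality of $\chi$), applying $G$, and using naturality of $\eta$ against $\alpha(a)$, leaving $\rho$ as analogous; you do the mirror computation for $\rho$ with $\chi_P$, $G(\beta(b))$ and $\gamma_B$, which is the same argument, and it shares with the paper the tacit use of naturality for right multiplications of degree $\neq e$, i.e.\ that the fixed isomorphisms are compatible with suspensions.
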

\begin{proof}
 We shall  prove that  $\psi(qa)=\psi(q)a$, for all $q\in Q$ and $a\in A$. The fact that $\rho(pb)=\rho(p)b$, for all $p\in P$ and $b\in B$ can be verified in a completely analogous way.

First, let us recall that, by definition, for any $q\in Q$ and $a\in A$, we have that $qa=\bar\alpha(a)(q)$, where  $\bar\alpha(a) =F(\alpha(a))$. On the other hand, for any  $f\in B\cdot \HOM_A(P,A)$ and $a\in A$, we have, for any $x\in P$, that $(fa)(x)=f(x)a$ and since $f(x)\in A$ then $(fa)(x)=\alpha(a)(f(x))$; so that $fa=\alpha(a)\circ f$.

 Now,  we know by Proposition~\ref{isomorfismo clave} that the isomorphism  $\chi_Q:Q\flecha B\cdot \HOM_B(B,Q)$ involved in the definition of $\psi$, is  natural, which implies that, for any $q\in Q$ we have 

 $$\bar\alpha(a) \circ\chi_Q(q) =\chi_Q(\bar\alpha(a)(q))=\chi_Q(qa).$$ 
 Applying $\bar G$ to this equality we get
 $$\bar G(\bar\alpha(a)\circ\chi_Q(q))=\bar G(\chi_Q(qa))=\psi(qa).$$ 
On the other hand,

 \begin{align*} 
& \bar G(\bar\alpha(a)\circ\chi_Q(q))=
\eta \circ G(\bar\alpha(a) \circ \chi_Q(q))= 
\eta \circ G(F(\alpha(a))) \circ G(\chi_Q(q))\\
&= \alpha(a) \circ \eta  \circ G(\chi_Q(q))
= \alpha(a) \circ  \bar{G}(\chi_Q(q))
=  \alpha(a)\circ \psi(q)=\psi(q)a.
\end{align*} 
\end{proof}

In order to construct  a graded Morita context, we begin with the trace maps. 
%{\color{teal} We set $\hat F:\END(_AP)\flechadecor{F}\END(_BF(P))\overset{{\eta}'}{\cong}\END(_BB).$   }

Consider the maps
\begin{eqnarray}\label{mu y nu primas}
\mu':P \times Q\flechag A&\text{such that}& \mu'(p,q)=(\psi(q))(p),\\ \nonumber
\nu': Q\times P\flechag \END(_B B)&\text{such that}& \nu'(q,p)= \hat F(\psi(q)(-)\cdot p),
\end{eqnarray} where $\hat{F}$ is defined after \eqref{eq:F_Gorro}.

Note that $\psi(q)(-)\cdot p\in \END(_A P)$; that is, for $x\in P$ we have $x\mapsto \psi(q)(x)\cdot p \in P$.

\begin{lemma}
In the above setting, the maps in (\ref{mu y nu primas}) are $B$- and $A$-balanced, respectively.
\end{lemma}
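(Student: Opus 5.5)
We need to show that $\mu'(pb,q)=\mu'(p,bq)$ for $b\in B$, and that $\nu'(qa,p)=\nu'(q,ap)$ for $a\in A$. Biadditivity of both maps is immediate: $\psi$ is additive, $\psi(q)$ is additive, and $\hat F=\eta\circ F(-)\circ\eta^{-1}$ is additive because $F$ is an additive functor.

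For $\mu'$, the argument is a direct unwinding of the module structures.

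1. By Lemma~\ref{lema del rho y el psi}, $\psi$ is a homomorphism of left $B$-modules, so $\psi(bq)=b\psi(q)$.
2. The left $B$-action on $B\cdot\HOM_A(P,A)$ is the one described before Lemma~\ref{el HOM siempre es torsionfree}, namely $(bf)(p)=f(pb)$.
3. Combining these,
\[\mu'(p,bq)=\psi(bq)(p)=(b\psi(q))(p)=\psi(q)(pb)=\mu'(pb,q).\]

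For $\nu'$, it suffices to prove the equality of endomorphisms of $P$ before applying $\hat F$:
\[\psi(qa)(-)\cdot p=\psi(q)(-)\cdot ap\quad\text{in }\END(_AP),\]
since $\hat F$ is a function. The argument again uses right $A$-linearity of $\psi$.

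1. By Lemma~\ref{lema del rho y el psi}, $\psi(qa)=\psi(q)a$.
2. The right $A$-action on $B\cdot\HOM_A(P,A)$ gives $(\psi(q)a)(x)=\psi(q)(x)\,a$.
3. Hence, for $x\in P$,
\[\psi(qa)(x)\cdot p=(\psi(q)(x)\,a)\,p=\psi(q)(x)\,(ap),\]
using associativity of the left $A$-action on $P$.
4. Applying $\hat F$ gives $\nu'(qa,p)=\nu'(q,ap)$.

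The only point needing care is whether $\psi(q)(-)\cdot p$ really lies in $\END(_AP)$, i.e.\ is graded and left $A$-linear. It is a sum of homogeneous components because $\psi(q)$ is a sum of graded homomorphisms and $p$ is a sum of homogeneous elements. Left $A$-linearity follows from $\psi(q)$ being $A$-linear. It is also worth noting whether the paper writes $\psi(q)$ acting on the left or on the right; with right-hand notation ($x\psi(q)$), steps 2 and 3 for $\nu'$ read identically.

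So no real obstacle is expected. The whole lemma reduces to the bimodule property of $\psi$ from Lemma~\ref{lema del rho y el psi} together with the definitions of the actions on $\HOM$.
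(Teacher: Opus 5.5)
Your proposal is correct and follows essentially the same route as the paper. You get $B$-balancedness of $\mu'$ from the left $B$-linearity of $\psi$ together with $(bf)(p)=f(pb)$. You get $A$-balancedness of $\nu'$ by showing $\psi(qa)(-)\cdot p=\psi(q)(-)\cdot(ap)$ in $\END(_AP)$, via the right $A$-linearity of $\psi$ from Lemma~\ref{lema del rho y el psi}, and then applying $\hat F$.
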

\begin{proof}
First, for $b\in B$, $p\in P$ and $q\in Q$ 
we know that $\psi(q)(pb)=(b\psi(q))(p)$ by  the definition of the left $B$-action on the homomorphisms in  $\HOM_A(P,A)$. Having in mind that 
$\psi$ is  left ${B}$-linear (see Lemma~\ref{lema del rho y el psi}), we have, by equality~\eqref{mu y nu primas}, that
\[ \mu'(pb, q)=\psi(q)(pb)=(b\psi(q))(p)= \psi(bq)(p)=\mu'(p,bq).\]

Now  take $x,p\in P$, $q\in Q$ and $a\in A$.  Then by the right $A$-action on $\HOM_A(P,A)$ and Lemma~\ref{lema del rho y el psi} we see that 
\[\psi(qa)(x)p =(\psi(q)a)(x)p=\psi(q)(x)ap,  \] which implies that  
$$ \psi(qa)(-)\cdot p = \psi(q)(-)\cdot (ap)$$
 in $\END(_A P).$ Then applying $\hat F$ we conclude that $\nu'(qa,p)= \nu'(q,ap),$ as desired.
\end{proof}

 Thus, we may define the following group homomorphisms given  on tensor products
\begin{eqnarray*}%\label{los mu y nu tensoriales}
	\mu:P \otimes_B Q\flechag A&\text{such that}& \mu(p\otimes q)=\psi(q)(p)\label{mu}.\\ \nonumber
	\nu: Q\otimes_A P\flechag \END(_B B)&\text{such that}& \nu(q\otimes p)= \hat F(\psi(q)(-)\cdot p).
\end{eqnarray*}

As it is usual, we denote the trace maps, for $p\in P$ and $q\in Q$, as 
\begin{equation}\label{tracemaps}
\pr{p}{q}=\mu(p\otimes q)=\psi(q)(p)\quad\text{and}\quad [q,p]=\nu(q,p)=\hat F(\psi(q)(-)\cdot p).
\end{equation}

Now, take $x,p\in P$, $q\in Q$.   We want to compute $x \cdot [q,p]$. To do this, note that $ \hat F(\psi(q)(-)\cdot p)\in \END_B(B)$ acts on $P$ via the right multiplication defined in Remark~\ref{Q por I}; that is, for any $x\in P$ and $g\in \END(_B B)$, we have $x\cdot g=x\cdot G(g)=G(g)(x)$. Then, using \eqref{eq:F_Gorro}, we obtain 
\begin{equation}\label{regla de multiplicar x[q,p]}
 x\cdot [q,p]=x\cdot \nu(q,p)=x\cdot G(\hat F(\psi(q)(-)\cdot p))=x\cdot \psi(q)(-)\cdot p=\psi(q)(x)\cdot p.
\end{equation}

Note that \eqref {regla de multiplicar x[q,p]} implies the following  associativity of the trace maps in  \eqref{condiciones asoc Contextos}
\begin{align}\label{Asociatividad facil}
  x[q,p]=\psi(q)(x)\cdot p=\pr{x}{q}\cdot p.
\end{align}

From now on, for the sake of simplicity, we identify $B$ and $\beta(B)$. We recall that if $R$ is a graded ring with identity and $K$ is a graded right (or left) ideal, then $RK$ (or $KR$) is a graded two-sided ideal.

In order to show that   $(A,B,P,Q,\mu,\nu)$ is a graded Morita context, we shall prove  the rest of the properties, beginning with those involving  $\mu$.

To see that  $\mu$ is $A$-bilinear take $a\in A$, $p\in P$ and $q\in Q.$ Then   by \eqref{tracemaps} we have that
 \begin{align*}
 \pr{ap}{q}=&\psi(q)(ap)\overset{\eqref{def del rho y psi}}{=}a\cdot \psi(q)(p)=a\pr{p}{q}.\\
\pr{p}{qa}=&\psi(qa)(p)\overset{Lemma~\ref{lema del rho y el psi}}{=}(\psi(q)a)(p) =\psi(q)(p)a=\pr{p}{q}\cdot a,
\end{align*} showing the $A$-bilinearity.

 Next, consider $\{\gamma,\delta,\sigma,\tau\}\subseteq\Gamma$, $a\in A_\gamma$, $a'\in A_\delta$, $p\in P_\sigma$ and $q\in Q_\tau$. We know that $a\pr{p}{q}a'=\pr{ap}{qa'}$. 
We have that $qa'\in Q_{\tau\delta}$, so that $\psi(qa')\in B\cdot \HOM_A(P,A)_{\tau\delta}$ and then, by definition, $\pr{ap}{qa'} = \psi(qa')(ap)\in A_{\gamma\sigma\tau\delta}$, proving that $\mu$ is a graded bimodule homomorphism (of degree $e$).

Now we check that $\mu$ is onto. Take an arbitrary  $a\in A$. Since $_A P$ is a generator, by Corollary~\ref{generador y traza} and Remark~\ref{la traza de HOM}  there exists a set $\{f_1,\dots,f_r\}$ with $f_i\in \HOM_{A}(P,A)$ and $\{p_1,\dots,p_r\}$, with $p_i\in P$ such that $\sum_{i=1}^rf_i(p_i)=a$.
In view of the equality $P=PB$,  we may write $p_i=\sum_{j}p_{ij}b_{ij}$, where $b_{ij}\in B$, so that $f_i(p_i)= \sum_j f_i(p_{ij}b_{ij})= \sum_{j}(b_{ij}f_i)(p_{ij})$. Then, since $b_{ij}f_i\in B\cdot\HOM_B(P,A)$ and $\psi$ is an isomorphism, there exists $q_{ij}\in Q$ such that $\psi(q_{ij})=b_{ij}f_i$, and consequently  $a=\sum_if_i(p_i)=\sum_{ij}(b_{ij}f_i)(p_{ij})=\sum_{ij}\psi(q_{ij})(p_{ij})=\sum_{ij}\pr{p_{ij}}{q_{ij}}$, and we are done.\\

Now we check the properties for $\nu$. Its $B$-bilinearity follows from the next computation with   $x,p\in P$, $q\in Q$ and $b\in B:$ 
\begin{align*}
 x (b\cdot [q,p])=&(xb)[q,p]\overset{\eqref{regla de multiplicar x[q,p]}}{=} \psi(q)(xb)p=(b\psi(q))(x)p=\psi(bq)(x)p\overset{\eqref{regla de multiplicar x[q,p]}}{=}x[bq,p].\\
 x([q,p]\cdot b)=&(x[q,p])b\overset{\eqref{regla de multiplicar x[q,p]}}{=} (\psi(q)(x)p)b=\psi(q)(x)(pb)\overset{\eqref{regla de multiplicar x[q,p]}}{=}x[q,pb].
\end{align*}

 To see that $\nu$ is graded of degree $e,$ consider $\{\gamma,\delta,\sigma,\tau\}\subset\Gamma$, $b\in B_\gamma$, $b'\in B_\delta$, $p\in P_\sigma$ and $q\in Q_\tau$. We know that $b[q,p]b'=[bq,pb']$. Then $\psi(bq)\in \HOM(P,A)_{\gamma\tau}$ and $pb'\in P_{\sigma\delta}$. Now for $x\in P_\rho$ we have that $x[bq,pb']\overset{\eqref{regla de multiplicar x[q,p]}}{=}\psi(bq)(x)pb'\in P_{\rho\gamma\tau\sigma\delta}$ and we are done.

 Next we show that  $Im\,\nu = B.$
Recall that we are identifying $B$ and $\beta(B)$. Observe that $Q\otimes_A P=BQ\otimes_A PB=B(Q\otimes_A P)B$, from which we have 
$$ Im\,\nu  = \nu(Q\otimes_A P)= B\nu(Q\otimes_A P)B\subseteq B \; \END(_BB) \; B = B.$$ In particular, $Im\,\nu$ is a  two-sided ideal in $B$ and 
\begin{equation}\label{eq:Im nu con B}
  Im\,\nu  = B \,  Im\,\nu =  Im\,\nu \, B = B\, Im\,\nu \, B.\end{equation}  Moreover,
 $$ Im\,\nu \; \END(_BB) = B Im\,\nu B \; \END(_BB) \subseteq  B Im\,\nu B = Im\,\nu ,$$ showing that $Im\,\nu$ is a right ideal in $\END(_BB).$ Consequently, we have the following inclusion of two-sided ideals  $\END(_BB) Im\,\nu \subseteq \END(_BB) B. $  

Observe that $P=P\cdot Im\,\nu .$ Indeed, 
take an arbitrary $p\in P$. Since $P\in A\gr$ we may write $p=\sum_ia_ip_i,$ and $a_i=\sum_j\pr{p'_{ij}}{q_j},$ because $\mu$ is onto. Then $$p=\sum_{i,j}\pr{p'_{ij}}{q_j}p_i\overset{\eqref{Asociatividad facil}}{=}\sum_{i,j}p'_{ij} [q_j,p_i]\in P\cdot Im\,\nu.$$
So that, $P\subseteq P\cdot Im\,\nu \subseteq P,$ proving that $P = P\cdot Im\,\nu .$

Note that $P = P Im\,\nu = P \,  (\END(_BB) \,  Im\,\nu )$ and by Lemma~\ref{PJ es P} we have that $B\subseteq \END(_BB)Im\,\nu ,$ which implies 
$\END(_BB)B\subseteq \END(_BB)Im\,\nu$. Since we saw already that  $\END(_BB)Im\,\nu \subseteq \END(_BB)B,$ we obtain the equality
$\END(_BB)Im\,\nu = \END(_BB)B.$ Hence, 
$$B=B\; \END(_BB)\; B= B\; \END(_BB)\; Im\,\nu=B \;  Im\,\nu \overset{\eqref{eq:Im nu con B}}{=} Im\,\nu ,$$ as desired.

It remains  to prove that
\begin{align}\label{la otra []q}
 [q,p]q'=q\pr{p}{q'}
\end{align} for all $p\in P$, $q,q'\in Q.$
   Let  $x\in P$. Recalling that $[q,p]\in B$, for any $p\in P$ and $q\in Q$ and having in mind the actions  of $A$ and $B$ on homomorphisms, we have
\begin{align*}
	\psi([q,p]q')(x)=&([q,p]\psi(q'))(x)=\psi(q')(x[q,p])\overset{\eqref{Asociatividad facil}}{=}\psi(q')\left(\pr{x}{q}p\right)=\\
	=&\pr{x}{q}\psi(q')(p)\overset{\eqref{tracemaps}}{=}\pr{x}{q}\pr{p}{q'},\\
	\psi(q\pr{p}{q'})(x)=&\left(\psi(q)\pr{p}{q'}\right)(x)=(\psi(q)(x))\pr{p}{q'}\overset{\eqref{tracemaps}}{=}\pr{x}{q}\pr{p}{q'},
\end{align*} which proves \eqref{la otra []q}.

Thus, $(A,B,P,Q,\mu,\nu)$ is a graded Morita context. As a consequence of the previous results, we have the following theorem.

\begin{theorem}\label{Car Equiv}
Let $A, B$ be idempotent  torsion-free graded rings, and let $F:A\gr\flechag B\gr$ and  $G:B\gr\flechag A\gr$ be inverse graded category equivalences. Set $P=G(B)$ and $Q=F(A)$. Then the following assertions hold.
 \begin{enumerate}
  \item $_AP_B$ and $_BQ_A$ are graded bimodules.
  \item $_AP\in A\gr$, $Q_A\in \grd A$, $_BQ\in B\gr$, $P_B\in \grd B,$ and  $_AP$ and  $_B Q$  are generators.
  \item There are natural graded isomorphisms
 \begin{eqnarray*}
 G&\cong_{gr-nat}& A\cdot \HOM_B(Q,-),\\
F&\cong_{gr-nat}& B\cdot\HOM_A(P,-).
\end{eqnarray*}
\item There exists a graded Morita context $(A,B,P,Q,\mu,\nu)$, with surjective trace maps.
 \end{enumerate}
\end{theorem}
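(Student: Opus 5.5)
The theorem mostly collects results already established in Sections~\ref{Sec Morita Equiv} and \ref{ SecMorita contexts}, so the proof assembles them item by item.

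For (1) and (2), we invoke Proposition~\ref{P y Q son bimod sin torsion}. It gives the bimodule structures on $P=G(B)$ and $Q=F(A)$, with the right actions defined via $\bar\beta$ and $\bar\alpha$ as in Remark~\ref{Q por I}. It also gives $_AP\in A\gr$, $P_B\in\grd B$, $_BQ\in B\gr$ and $Q_A\in\grd A$. Unitality on the right is Lemma~\ref{PJ es P}, and right torsion-freeness is the argument given just before that lemma.

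The generator property needs its own short argument. Since $A$ and $B$ are idempotent and torsion-free, $B$ is a graded generator of $B\gr$ by Remark~\ref{traza sobe el anillo hace generador}. Being a generator is a categorical notion: it means faithfulness of $\Hom(-,-)$ out of the family of suspensions $B(\sigma)$. The functor $G$ is an equivalence that commutes with the suspensions $T_\sigma$, so it carries this property to $P=G(B)$ in $A\gr$. Symmetrically, $Q=F(A)$ generates $B\gr$.

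There is one subtlety here. Definition~\ref{generador unital} quantifies over all unital $N$, not only torsion-free ones. To stay inside the category, we instead use Corollary~\ref{generador y traza}. If $Tr_M(P)\neq M$, the proof of Proposition~\ref{unital genera unital} produces a nonzero morphism $M\to K/t_A(K)$, with $K/t_A(K)\in A\gr$, that kills every map out of every $P(\sigma)$. Transporting this morphism through $F$ gives a nonzero morphism in $B\gr$ killing all maps out of every $B(\sigma)$, which contradicts $Tr(B)=$ everything. This torsion-quotient step is the only point that needs genuine care.

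Item (3) is exactly Proposition~\ref{F y G iso a los HOM}, using $P=G(B)$ and $Q=F(A)$.

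For (4), we take the sextuple $(A,B,P,Q,\mu,\nu)$ constructed above, with $\mu(p\otimes q)=\psi(q)(p)$ and $\nu(q\otimes p)=\hat F(\psi(q)(-)\cdot p)$. The verifications preceding the statement already cover everything needed:
\begin{itemize}
\item balancedness of $\mu'$ and $\nu'$;
\item bilinearity of $\mu$ and $\nu$, and that both are graded of degree $e$;
\item the associativity identities \eqref{Asociatividad facil} and \eqref{la otra []q};
\item surjectivity of $\mu$, which uses the generator property from (2), Remark~\ref{la traza de HOM}, and the isomorphism $\psi$ of Lemma~\ref{lema del rho y el psi};
\item $Im\,\nu=B$ under the identification of $B$ with $\beta(B)$, which is obtained from $P=P\cdot Im\,\nu$ together with Lemma~\ref{PJ es P}.
\end{itemize}
So (4) follows by citing these computations. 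The main thing to check is that the surjectivity argument for $\mu$ relies only on the generator property proved in (2), so the argument is not circular.
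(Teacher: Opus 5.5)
Your proposal is correct and follows the paper's route. As in the paper, the theorem is assembled from Proposition~\ref{P y Q son bimod sin torsion} (with Lemma~\ref{PJ es P} and the torsion-freeness argument preceding it), Proposition~\ref{F y G iso a los HOM}, and the construction of $(A,B,P,Q,\mu,\nu)$ that precedes the statement. Your one addition is the explicit proof that $_AP$ and $_BQ$ are generators, using the trace criterion, the torsion-free quotient $K/t_A(K)$, and fullness of $F$ together with $F(P(\sigma))\cong B(\sigma)$; the paper uses this property without justification when proving that $\mu$ is onto, and your argument fills that in correctly.
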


We recall from \cite[p. 45]{GS} that a Morita context $(R,S,M,N)$ is said to be {\it non-degenerate} if the modules $_RM_S$, $_SN_R$ are torsion-free on both sides and the four pairings given by $\pr{-}{-}$ and $[-,-]$ are faithful (in the sense that $[n,M]=0$ implies $n=0$, for example).

\begin{corollary}\label{El contexto es nodegenerado}
	The graded Morita context $(A,B,P,Q,\mu,\nu)$ in Theorem~\ref{Car Equiv} is non-degenerate.
\end{corollary}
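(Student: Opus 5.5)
We have to verify two things: the torsion-freeness of $P$ and $Q$ on both sides, and the faithfulness of the four pairings. Torsion-freeness is already available. Theorem~\ref{Car Equiv}(2) (equivalently Proposition~\ref{P y Q son bimod sin torsion}) gives $_AP\in A\gr$, $P_B\in\grd B$, $_BQ\in B\gr$ and $Q_A\in\grd A$. So $P$ and $Q$ are unital and torsion-free on both sides, and the work lies in the pairings.

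For the pairing $\pr{-}{-}$ we use $\psi$. Suppose first that $q\in Q$ satisfies $\pr{P}{q}=0$. Then $\psi(q)(p)=0$ for all $p$, so $\psi(q)=0$, and since $\psi$ is an isomorphism (Lemma~\ref{lema del rho y el psi}) we get $q=0$. Next suppose $p\in P$ satisfies $\pr{p}{Q}=0$. Because $\psi$ is surjective onto $B\cdot\HOM_A(P,A)$, this gives $(bf)(p)=f(pb)=0$ for all $b\in B$ and $f\in\HOM_A(P,A)$. We now show that $f(x)=0$ for all $f\in \HOM_A(P,A)$ forces $x=0$ when $x\in P$. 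Since $A$ cogenerates in the sense of Proposition~\ref{isomorfismo clave}, this can be checked using the generator property. Write $1$-free: take $a\in A$ and consider $ax$. We have $\mu$ surjective and, by \eqref{Asociatividad facil}, $ax=\sum\pr{x_i}{q_i}x=\sum x_i[q_i,x]$. It is simpler to use faithfulness of $[-,-]$ in this case, so we postpone it. Directly: apply the isomorphism $F$ and Lemma~\ref{HOM natural}, giving $\HOM_A(P,A)\cong\HOM_B(B,Q)$. If every such map kills $pb$, then the element $pb$ is killed by all maps $P\to A$. Under $F$ these correspond to maps $B\to Q$, and the maps $\chi_Q(q)$ realise these. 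The cleanest route is therefore $\pr{pb}{Q}=0\Rightarrow$ (by \eqref{Asociatividad facil}) $pb[Q,P]=\pr{pb}{Q}P=0$. Since $\nu$ is onto, $pbB=0$, and torsion-freeness of $P_B$ gives $pb=0$ for all $b$. Applying it once more yields $p=0$. The same trick gives $p=0$ directly from $\pr{p}{Q}=0$: we have $pB=p[Q,P]=\pr{p}{Q}P=0$, hence $p=0$.

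The pairing $[-,-]$ is handled symmetrically, using the other associativity relation \eqref{la otra []q} together with surjectivity of $\mu$. If $[q,P]=0$, then $qA=q\pr{P}{Q}=[q,P]Q=0$, so $q=0$ since $Q_A$ is torsion-free. If $[Q,p]=0$, then $Ap=\pr{P}{Q}p=P[Q,p]=0$ by \eqref{Asociatividad facil}, so $p=0$ since $_AP$ is torsion-free. For the fourth case, $\pr{P}{q}=0$ also gives $Bq=[Q,P]q=Q\pr{P}{q}=0$, so $q=0$; this provides a uniform argument alongside the $\psi$ argument above.

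The only delicate point is keeping track of which associativity identity and which surjectivity ($\mu$ or $\nu$, both from Theorem~\ref{Car Equiv}(4)) to pair with which one-sided torsion-freeness. Each of the four cases reduces to a single line of this form, so no real obstacle remains once the surjectivity of both trace maps is available.
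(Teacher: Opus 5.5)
Your proof is correct and, once the exploratory detour in the second paragraph (the remarks about cogeneration, $\HOM_A(P,A)\cong\HOM_B(B,Q)$, and the argument via $pb$) is deleted, it is exactly the paper's proof. Each of the four faithfulness conditions is reduced to one-sided torsion-freeness via an associativity identity and the surjectivity of $\mu$ or $\nu$, e.g.\ $pB=p[Q,P]=\pr{p}{Q}P=0$; your alternative for $\pr{P}{q}=0$ via injectivity of the isomorphism $\psi$ is also valid but redundant, given the uniform computation $Bq=[Q,P]q=Q\pr{P}{q}=0$.
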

\begin{proof} Since we already know that $_AP_B$ and $_BQ_A$  are unital and torsion-free on both sides, we only need to check the faithfullness of the four pairings. We recall that $[Q,P]=B$ and $\pr{P}{Q}=A$.
	For all $p\in P$ and $q\in Q$, we have
	\begin{eqnarray*}
		\pr{P}{q}=0&\Rightarrow& Q\pr{P}{q}=0\Rightarrow [Q,P]q=0\Rightarrow Bq=0\Rightarrow q=0,\\
		\pr{p}{Q}=0&\Rightarrow& \pr{p}{Q}P=0\Rightarrow p[Q,P]=0\Rightarrow pB=0\Rightarrow p=0,\\
		\left[Q,p\right]=0 &\Rightarrow& P[Q,p] \Rightarrow \pr{P}{Q}p=0 \Rightarrow Ap=0 \Rightarrow p=0,\\
		\left[q,P\right]=0 &\Rightarrow& [q,P]Q\Rightarrow q\pr{P}{Q}=0 \Rightarrow qA=0 \Rightarrow q=0,
	\end{eqnarray*} as desired.
\end{proof} 

 As we mentioned  in Remark~\ref{torsion en tor} the graded bimodules $P\otimes_B Q$ and $Q\otimes_A P$ are unital, but not necessarily torsion-free. Nevertheless, the kernels of $\mu$ and $\nu$ have a very good behavior, as one may see in the next corollary.

\begin{corollary}\label{los nucleos de mu y nu son de torsion}
The trace maps $\nu$ and $\mu$ have torsion kernels.
\end{corollary}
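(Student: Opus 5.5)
We need to show that $\ker\mu$ is a torsion submodule of $P\otimes_B Q$, that is, $A\cdot\ker\mu=0$ (equivalently, $\ker\mu\cdot A=0$), and similarly that $B\cdot\ker\nu=0$. The plan is to use the associativity conditions \eqref{condiciones asoc Contextos}, already verified for the context $(A,B,P,Q,\mu,\nu)$ of Theorem~\ref{Car Equiv}, together with the surjectivity of $\nu$ and $\mu$. This is the standard trick for Morita contexts.

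\emph{The kernel of $\mu$.} Take $z=\sum_i p_i\otimes q_i\in\ker\mu$, so that $\sum_i\pr{p_i}{q_i}=0$. It suffices to show $z\,a=0$ for all $a\in A$. Since $\mu$ is onto, we may write $a=\sum_j\pr{x_j}{y_j}$, so it is enough to check $z\pr{x}{y}=0$ for all $x\in P$ and $y\in Q$. Using \eqref{la otra []q} and the balancing over $B$, we compute
\[
\sum_i p_i\otimes q_i\pr{x}{y}=\sum_i p_i\otimes [q_i,x]y=\sum_i p_i[q_i,x]\otimes y=\sum_i \pr{p_i}{q_i}x\otimes y=0,
\]
where the third equality uses \eqref{Asociatividad facil}. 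Hence $zA=0$. For the left side, write $a=\sum\pr{x}{y}$ and compute
\[
\pr{x}{y}\sum_i p_i\otimes q_i=\sum_i x[y,p_i]\otimes q_i=\sum_i x\otimes [y,p_i]q_i=\sum_i x\otimes y\pr{p_i}{q_i}=0.
\]
So $\ker\mu$ is torsion on both sides.

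\emph{The kernel of $\nu$.} Symmetrically, take $w=\sum_i q_i\otimes p_i\in\ker\nu$ and $b=\sum[y,x]$, using that $\nu$ is onto $B$ (with $B$ identified with $\beta(B)$). Then
\[
w[y,x]=\sum_i q_i\otimes p_i[y,x]=\sum_i q_i\otimes\pr{p_i}{y}x=\sum_i q_i\pr{p_i}{y}\otimes x=\sum_i[q_i,p_i]y\otimes x=0,
\]
and the left action is handled analogously.

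\emph{Main obstacle.} The only delicate point is that $\nu$ takes values in $\END(_BB)$, with the identification $B=\beta(B)$, so the identity $[q,p]q'=q\pr{p}{q'}$ must be read with $[q,p]$ acting as an element of $B$. This is ensured by $Im\,\nu=B$ and by \eqref{la otra []q}, both proved above. One also has to interpret ``torsion kernel'' as annihilation by the ring on the relevant side, matching the definition of $t_A$; the computation above gives this on both sides.
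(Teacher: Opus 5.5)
Your proof is correct and is essentially the paper's argument: the paper proves only that $A\cdot\ker\mu=0$, using exactly your left-side computation (write $a=\sum_j\pr{x_j}{y_j}$, then apply \eqref{Asociatividad facil}, balance over $B$, apply \eqref{la otra []q}), and says the $\nu$ case is analogous. Your right-side computations go beyond what the paper checks. The parenthetical ``equivalently'' is not true for general bimodules, but it does no harm here because you verify both sides directly.
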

\begin{proof}
We shall prove it for $\mu$, and  for $\nu$ the argument is completely analogous.

Suppose that $\mu(\sum_i p_i\otimes q_i)=\sum_i \pr{p_i}{q_i}=0$, with $p_i\in P$ and $q_i\in Q$. For any $a\in A$, write  $a=\sum_j\pr{x_j}{y_j}$ with $x_j\in P$ and $y_j\in Q$. Then
\begin{eqnarray*}
a\left(\sum_i p_i\otimes q_i\right)&=&\sum_i a p_i\otimes q_i=\sum_i \left(\sum_j\pr{x_j}{y_j}\right) p_i\otimes q_i=\\ &=&
\sum_i \sum_j(\pr{x_j}{y_j}p_i)\otimes q_i \overset{\eqref{Asociatividad facil}}{=} \sum_i\left( \sum_j(x_j[y_j,p_i])\otimes q_i\right)=\\ &=&
\sum_i\left( \sum_jx_j\otimes [y_j,p_i]q_i\right)\overset{\eqref{la otra []q}}{=} \sum_jx_j\otimes \sum_i y_j\pr{p_i}{q_i}=\\ &=&
\sum_jx_j\otimes y_j\sum_i\pr{p_i}{q_i}=0.
\end{eqnarray*}
Hence $a \sum_i p_i\otimes q_i=0$ for all $a\in A$.
\end{proof}

\section{From graded Morita contexts to graded category equivalences}\label{sec:FromMoritaContextToEquiv}

 The aim of this section is to prove a converse  of the result obtained in the  previous section; that is, to show that any  graded Morita context $(A,B,P,Q,\mu,\nu)$ with surjective trace maps, where $A, B$ are idempotent torsion-free graded rings and $_A P_B$, $_BQ_A$ are torsion-free  unital bimodules, induce an equivalence between $A\gr$ and $B\gr$. To this end, we will present some previous results within the more general framework of unital rings and modules, which are not necessarily torsion-free. These results will be useful for the generalization that we will consider in the next section.   We note that in this more general setting the categories of graded unital torsion-free modules $A\gr$, $B\gr$, $\grd A$ and $\grd B$ not necessarily contain any of $A,\,B,\,P,\,Q$.

\begin{proposition}\label{primeras propiedades del contexto}
Let $A, B$ be idempotent  graded rings and $_A P_B$, $_BQ_A$  unital bimodules, such that there is a graded Morita context $(A,B,P,Q,\mu,\nu)$ with surjective trace maps. Then
\begin{enumerate}
\item $_A P$, $_BQ$, $Q_A$ and $P_B$, generate $_AA$, $_BB$, $A_A$, $B_B$, respectively. Hence $_AP/t_A(P)$, $_BQ/t_B(Q)$, $P_B/t_B(P)$ and $Q_A/t_A(Q)$ are generators of their respective categories $A\gr$, $B\gr$, $\grd A$ and $\grd B.$
\item $\ker \mu$ and $\ker\nu$ are torsion graded bimodules.
\item The maps $$_BQ_A \flecha  B \cdot \HOM_A(P,A),\;\;\; q\mapsto \pr{-}{q}\;\;\; \text{and}\;\;\; _B Q_A \flecha  \HOM_B(P,B)\cdot A,\;\;\; q\mapsto [q, -],$$ $$_A P_B \flecha  A \cdot \HOM_B(Q,B),\;\;\; p\mapsto [-, p]\;\;\; \text{and}\;\;\; _A P_B \flecha  \HOM_A(Q,A) \cdot B,\;\;\; p\mapsto \pr{p}{-}$$ are graded epimorphisms of degeree $e$ of graded bimodules, whose kernels are, respectively, left $B$-, right $A$-, left $A$- and right $B$-torsion. 
\end{enumerate}
 If in addition $A,$  $B,$ $_A P_B$ and $_BQ_A$ are all  (left and right) torsion-free, then
 
\begin{enumerate}
	\setcounter{enumi}{3} 
  \item
\begin{itemize}
	\item[] $ Q \cong  B\cdot \HOM_A(P,A)$ and $Q \cong  \HOM_B(P,B)\cdot A$
	\item[]  $ P\cong  A\cdot\HOM_B(Q,B)$ and $P\cong  \HOM_A(Q,A)\cdot B$.
\end{itemize}
\item The context is non-degenerate.
\end{enumerate}
\end{proposition}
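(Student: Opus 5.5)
Each item is handled directly from the two associativity conditions \eqref{condiciones asoc Contextos} and the surjectivity of the trace maps; none of the torsion-free machinery of Section~\ref{ SecMorita contexts} is needed until items (4) and (5).

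For (1), surjectivity of $\mu$ gives every $a\in A$ as $\sum_i\pr{p_i}{q_i}$. The map $\pr{-}{q_i}$ is a left $A$-linear graded homomorphism $P\flecha A$ of degree $\deg q_i$ when $q_i$ is homogeneous. Hence, by Remark~\ref{la traza de HOM}, $Tr_A(P)=A$, and Proposition~\ref{unital genera unital} shows that $_AP$ generates $_AA$. The other three cases use $\pr{p}{-}$, $[-,p]$ and $[q,-]$ in the same way. Passing to the quotient: every $f\in\HOM_A(P,M)$ with $M$ torsion-free kills $t_A(P)$, so it factors through $P/t_A(P)$. For $M\in A\gr$ we have $M=AM$, and $A=Tr_A(P)$ gives $M\subseteq Tr_M(P)=Tr_M(P/t_A(P))$, since each $am$ is a sum of elements $\pr{p}{q}m=f(p)$ with $f=\pr{-}{q}\cdot m$. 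We also need $P/t_A(P)\in A\gr$. It is unital, and it is torsion-free by the standard fact that $A^2=A$ forces $t_A(P/t_A(P))=0$: if $Ap\subseteq t_A(P)$ then $Ap=A^2p=0$. Corollary~\ref{generador y traza} then finishes (1).

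For (2), I would repeat verbatim the computation in Corollary~\ref{los nucleos de mu y nu son de torsion}. That proof used only surjectivity of $\mu$ and the two associativity laws \eqref{Asociatividad facil} and \eqref{la otra []q}, which are exactly the context axioms. Thus $A\ker\mu=0$, and symmetrically $\ker\mu\, A=0$ (rewrite $a$ on the right as $\sum\pr{x_j}{y_j}$ and move the bracket through $Q$). The same goes for $\nu$.

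For (3), take the first map $\theta\colon q\mapsto\pr{-}{q}$. Bilinearity of $\mu$ gives $\pr{-}{bq}=b\pr{-}{q}$, which uses $\mu$ being $B$-balanced, and $\pr{-}{qa}=\pr{-}{q}a$; the degree-$e$ property follows from $\mu$ being graded. Its image lies in $B\cdot\HOM_A(P,A)$ because $Q=BQ$. For surjectivity, take $b f$ with $b=\sum_j [y_j,x_j]$. Then $(bf)(p)=\sum_j f(p[y_j,x_j])=\sum_j f(\pr{p}{y_j}x_j)=\sum_j\pr{p}{y_j}f(x_j)=\pr{p}{\sum_j y_jf(x_j)}$, so $bf=\theta(\sum_j y_jf(x_j))$. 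For the kernel, suppose $\pr{P}{q}=0$. Then $[Q,P]q=Q\pr{P}{q}=0$, so $Bq=0$ and $\ker\theta$ is left $B$-torsion. The remaining three maps are symmetric.

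Under the extra torsion-free hypotheses, the kernels in (3) are torsion submodules of torsion-free modules, hence zero. So each map in (3) is an isomorphism, which gives (4). Item (5) follows from (3) as well: torsion-freeness of the bimodules is assumed, and each faithfulness condition, for example $\pr{P}{q}=0\Rightarrow q=0$, is exactly injectivity of the corresponding map. I expect the only delicate point to be the surjectivity computation in (3), namely checking carefully the side conventions for the $B$-action on $\HOM_A(P,A)$. The rest is bookkeeping.
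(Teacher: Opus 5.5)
Your proposal is correct and follows the paper's proof essentially step by step. For (1) you use the trace argument and then factor maps through $P/t_A(P)$; for (2) you reuse the earlier corollary's computation for the kernels of the trace maps; for (3) you run the same bilinearity, degree, kernel and surjectivity checks, writing $b=\sum_j[y_j,x_j]$; and you read off (4) and (5) from (3). Your extra checks fill in points the paper leaves implicit: that $P/t_A(P)$ is torsion-free because $A^2=A$, and that $\ker\mu$ is also right torsion.
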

\begin{proof}
(1) For each $q\in Q$ we have that  $\pr{-}{q}\in \HOM_A(P,A).$ Then taking  any $a\in A$, we may write $a=\sum_i\pr{p_i}{q_i}$ where $p_i\in P$ and $q_i\in Q$. So $a=\sum_i\pr{-}{q_i}(p_i)\in Tr_A(P)$.
Hence $A= Tr_A(P),$ which by  Proposition~\ref{unital genera unital} shows that $_A P$ generates $A$. 

 Now, it is clear that $_AP/t_A(P)$ belongs to $A\gr.$ To see that $_A P/t_A(P)$ is a generator, consider any $U\in A\gr$. We first check that $Tr_U(P)=U$. Take any $u\in U$ and write $u=\sum_ia_iu_i,$ $a_i \in A,$ $u_i\in U.$ On the one hand, we have $a_i=\sum_jf_{ij}(p_{ij})$, for $f_{ij}\in \HOM_A(P,A)$ and $p_{ij}\in P$, because $Tr_A(P)=A$. On the other hand, considering the right multiplication $\cdot u_i:A\flecha U$,  we have that $\sum_{ij}f_{ij}(-)\cdot u_i\in \HOM_A(P,U)$ and  $u=\sum_ia_iu_i=\sum_{ij}f_{ij}(p_{ij})\cdot u_i\in Tr_U(P)$. Now,  for any $f\in \HOM_A(P,U)$, we have that $t_A(P)\subseteq \ker\,f$, thanks to the fact that  $U$ is torsion-free, so that there exists  $\hat f\in \HOM_A(P/t_A(P),U)$, such that $Im\,f= Im\;\hat f$. Then $U=Tr_U(P)=Tr_{U}(P/t_A(P))$.  By Corollary~\ref{generador y traza} we conclude that $P/t_A(P)$ is a generator. The proofs of the other assertions in (1)  are similar.

(2) The proof is exactly the same as in Corollay~\ref{los nucleos de mu y nu son de torsion}.

(3) We prove the first  graded epimorphism, the proof of the others being  analogous. Consider $\psi:Q\flecha  \HOM_A(P,A)$ given by $\psi(q)(p)=\pr{p}{q},$  $p\in P$ and $q\in Q.$

Now we shall prove the following properties of $\psi$.

(i) $\psi$ is a $\bimod{B}{A}$-bimodule homomorphism. Let $a\in A$, $b\in B$ and $q\in Q$. Then for any $p\in P$,
\[ \psi(bqa)(p)=\pr{p}{bqa}=\pr{pb}{q}a=\psi(q)(pb)a= (b\psi(q))(p)a = (b\psi(q)a)(p) , \]
by the properties of the trace maps and  the definition of the actions of $A$ and $B$ on the  homomorphisms.

(ii) $\psi$ is a graded homomorphism of degree $e\in \Gamma$; so that, it is a bimodule homomorphism. Take $q\in Q_\sigma$. Then, for any $\tau\in \Gamma$ we have $\psi(q)(P_\tau)=\pr{P_\tau}{q}\subset A_{\tau\sigma}$ because $\mu$ is graded. So that $\psi(q)\in  \HOM_A(P,A)_\sigma$, proving that the degree of $\psi$ is $e$.

(iii) $\ker\, \psi$ is left $B$-torsion. Suppose that $q\in \ker \, \psi .$ Then $0=\psi(q)(P)=\pr{P}{q}$ and  $0=Q\pr{P}{q}=[Q,P]q=Bq .$ 

(iv) $Im\,\psi = B\cdot \HOM_A(P,A)$. Take $q\in Q$ and write $q=\sum_ib_iq_i$. Then obviously we have $\psi(q)=\sum_ib_i\psi(q_i)$, so that  $Im\,\psi \subseteq B\cdot \HOM_A(P,A)$. To see the opposite inclusion, take $b\in B$ and $g\in \HOM_A(P,A)$ and write $b=\sum_i[q_i,p_i]$ with $q_i\in Q$ and $p_i\in P$.  Then for any $x\in P$ we have 
\begin{align*}(bg)(x)&=g(xb)=g(x\sum_i[q_i,p_i])=g(\sum_i\pr{x}{q_i}p_i)=\sum_ig(\pr{x}{q_i}p_i)\\&=\sum_i \pr{x}{q_i}g(p_i)=  \sum_i \psi(q_i)(x)\cdot g(p_i)=\psi(\sum_i q_ig(p_i))(x). 
\end{align*} Consequently, $bg= \psi(\sum_i q_ig(p_i))\in Im\,\psi, $ as desired. 
This proves the first epimorphism in  (3), the proof of others being similar.

(4) This is an immediate consequence of (3). 

(5) The proof  is exactly the same as in Corollary~\ref{El contexto es nodegenerado}, as it depends only on the associativity properties of the trace maps and the fact that all graded bimodules are torsion-free.
\end{proof}

\begin{notation}\label{notation}
For a given graded Morita context  $(A,B,P,Q,\mu,\nu)$ with  idempotent $A,$ $B$, surjective trace maps and unital  bimodules, we write $\psi(q)=\pr{-}{q}$, for any $q\in Q$; that is $\psi:Q\flecha  B\cdot \HOM_A(P,A)$ is a $\bimod{B}{A}$-bimodule graded epimorphism from the proof of  Proposition~\ref{primeras propiedades del contexto}. In fact, we shall use both symbols,  $\psi(q)$ and  $\pr{-}{q}$.
\end{notation}

 Now we prove a particular case of some hom-tensor relations.

\begin{proposition}\label{hom y tor}
Let $A, B$ be idempotent graded rings and $_A P_B$, $_BQ_A$  unital bimodules, such that there is a  graded Morita context $(A,B,P,Q,\mu,\nu)$ with surjective trace maps. Then for any torsion-free unital modules  $_AK$ and $_BL$, we have the following.
\begin{enumerate}
\item The canonical map
\[\varphi:P\otimes_B B\cdot\HOM_A(P,K)\flecha A\cdot\HOM_A(A,K), \]
given by $\varphi(p\otimes f)(x)=  f(xp)=  xf(p),$ with $p \in P,$ $f\in  B\cdot\HOM_A(P,K), $ $x \in A,$ is a graded epimorphism of degree $e$ of graded left $A$-modules with torsion kernel.
\item The canonical map
\[ \gamma:B\otimes_B L\flecha B\cdot\HOM_A(P,P \otimes_BL), \]
given by $\gamma(b\otimes l)(p)=pb\otimes l,$ with $b \in B,$ $l \in L,$ $p \in P,$ is a graded epimorphism of degree $e$ of graded left $B$-modules with torsion kernel.
\end{enumerate}
\end{proposition}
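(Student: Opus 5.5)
For (1), I will first check that $\varphi$ is well defined. The expression $(p,f)\mapsto [x\mapsto f(xp)]$ is $B$-balanced, since $f(x(pb))=(bf)(xp)$ by the definition of the left $B$-action on $\HOM_A(P,K)$. The map $x\mapsto f(xp)$ is left $A$-linear, so it lies in $\HOM_A(A,K)$. It has degree $\deg p\cdot\deg f$, so $\varphi$ is graded of degree $e$. Left $A$-linearity of $\varphi$ follows from $(a\varphi(p\otimes f))(x)=f(xap)=\varphi(ap\otimes f)(x)$.

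Next comes surjectivity onto $A\cdot\HOM_A(A,K)$. Since $P=AP$, we have $\varphi(ap\otimes f)=a\varphi(p\otimes f)$, so the image lies in $A\cdot\HOM_A(A,K)$. For the converse, recall from the proof of Proposition~\ref{isomorfismo clave} that every element $ag$ with $g\in\HOM_A(A,K)$ equals $\chi_K(g(a))$, i.e. $x\mapsto x\,g(a)$. So it suffices to realize each map $x\mapsto xk$, $k\in K$. Write $k=\sum a_ik_i$ and $a_i=\sum_j\pr{p_{ij}}{q_{ij}}$, and set $f_{ij}=\pr{-}{q_{ij}}\cdot k_i\in\HOM_A(P,K)$. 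Then $\varphi(\sum p_{ij}\otimes f_{ij})(x)=\sum x\pr{p_{ij}}{q_{ij}}k_i=xk$. It remains to see that $f_{ij}\in B\cdot\HOM_A(P,K)$. Writing $q_{ij}=\sum b q'$ and using $\pr{-}{bq'}=b\pr{-}{q'}$ gives this.

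The main obstacle is showing that $\ker\varphi$ is torsion. Take $u=\sum_i p_i\otimes f_i$ with $\sum_i xf_i(p_i)=0$ for all $x$. Since $K$ is torsion-free, this gives $\sum_i f_i(p_i)=0$. For $a=\sum_j\pr{x_j}{y_j}\in A$, I will mimic Corollary~\ref{los nucleos de mu y nu son de torsion}. We have $au=\sum_{i,j}x_j[y_j,p_i]\otimes f_i=\sum_j x_j\otimes\sum_i[y_j,p_i]f_i$. Now compute $([y,p]f)(z)=f(z[y,p])=f(\pr{z}{y}p)=\pr{z}{y}f(p)$. Hence $\sum_i[y_j,p_i]f_i=\pr{-}{y_j}\cdot\sum_i f_i(p_i)=0$, so $au=0$.

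For (2), the map $p\mapsto pb\otimes l$ is left $A$-linear and graded, and the definition is $B$-balanced since $\gamma(bb'\otimes l)(p)=pbb'\otimes l=\gamma(b\otimes b'l)(p)$. Left $B$-linearity holds because $(b'\gamma(b\otimes l))(p)=pb'b\otimes l$, and this also shows $\gamma$ is graded of degree $e$. The image lies in $B\cdot\HOM_A(P,P\otimes_BL)$ because $\gamma(b\otimes l)=b\,\gamma_0(l)$ with $\gamma_0(l)=p\mapsto p\otimes l$; this uses $B=B^2$ and the fact that $\gamma_0(l)$ makes sense for all $l$. For surjectivity, take $b\,g$ with $g\in\HOM_A(P,P\otimes_B L)$, and write $b=\sum_i[q_i,p_i]$. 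Then $(bg)(z)=g(\sum_i\pr{z}{q_i}p_i)=\sum_i\pr{z}{q_i}g(p_i)$. Put $g(p_i)=\sum_k p_{ik}\otimes l_{ik}$. We get $\pr{z}{q_i}p_{ik}\otimes l_{ik}=z[q_i,p_{ik}]\otimes l_{ik}$, so $bg=\gamma(\sum_{i,k}[q_i,p_{ik}]\otimes l_{ik})$.

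For the kernel, note $B\otimes_BL\cong BL=L$ via multiplication, since $L$ is unital; after this identification $\gamma(l)(p)=p\otimes l$. Suppose $p\otimes l=0$ in $P\otimes_BL$ for all $p$. Then for $b=\sum[q_i,p_i]$ we get $bl=\sum_i\nu(q_i\otimes p_i)l$. This is the image of $\sum_i q_i\otimes(p_i\otimes l)=0$ under the map $Q\otimes_A P\otimes_B L\to L$, $q\otimes p\otimes l\mapsto[q,p]l$, which is well defined because $\nu$ is $A$-balanced. Hence $Bl=0$. Since $L$ is torsion-free, $\gamma$ is then actually injective on $L$, and in particular its kernel is torsion. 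If the multiplication map $B\otimes_B L\to L$ is not injective, its kernel is torsion by the same argument as in Corollary~\ref{los nucleos de mu y nu son de torsion}, because $B=B^2$. Combining the two, $\ker\gamma$ is torsion.
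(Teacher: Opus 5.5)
Your proof is correct. For (1), and for the surjectivity in (2), it follows essentially the paper's argument. In the surjectivity of $\varphi$ you are actually more careful than the paper. You verify explicitly, via $Q=BQ$ and $\pr{-}{bq'}=b\pr{-}{q'}$, that the maps $\pr{-}{q_{ij}}\cdot k_i$ lie in $B\cdot\HOM_A(P,K)$. The paper only takes maps in $\HOM_A(P,K)$ coming from the trace.

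The genuine difference is the torsion of $\ker\gamma$.
\begin{itemize}
\item The paper takes $\sum b\otimes l\in\ker\gamma$ and writes $b=\sum_i[q_i,p_i]$. It lifts the element to $\sum\sum_i(q_i\otimes p_i)\otimes l$ in $(Q\otimes_A P)\otimes_B L$. Using the associativity relations, it shows that every $[q',p']$ kills this lift, since it becomes $q'\otimes\sum p'b\otimes l=0$. It then pushes the torsion down along the left $B$-linear map $\nu\otimes 1_L$. This never uses that $L$ is torsion-free.
\item You instead factor $\gamma=\gamma_0\circ m$, where $m\colon B\otimes_B L\to L$ is multiplication and $\gamma_0(l)=(p\mapsto p\otimes l)$. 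You prove $\gamma_0$ is injective, using the well-defined map $Q\otimes_A P\otimes_B L\to L$, $q\otimes p\otimes l\mapsto[q,p]l$, together with torsion-freeness of $L$. Hence $\ker\gamma=\ker m$, which is torsion because $B=B^2$.
\end{itemize}
Your route uses the torsion-free hypothesis on $L$, but it yields more. It identifies $\ker\gamma$ exactly as $\ker m$, and shows that $\gamma_0$ is an isomorphism $L\cong B\cdot\HOM_A(P,P\otimes_B L)$. This is precisely what the paper later extracts in Proposition~\ref{endomorfismos y producto tensorial como funtor}(2), where it identifies both kernels with $t_B(B\otimes_B L)$ separately. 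The paper's route, in exchange, works for every unital $L$.

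One presentational correction: your opening claim ``$B\otimes_B L\cong BL=L$ via multiplication'' is false in general for idempotent rings without identity, since $m$ is only surjective. Your later sentence treating a non-injective $m$ repairs this, so simply state the factorization $\gamma=\gamma_0\circ m$ from the outset rather than asserting an identification.
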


\begin{proof} (1) First, note that, for any $p\otimes f \in P\otimes_BB\cdot\HOM_A(P,K)$, we have that $f(p)=\sum_ia_ik_i$ thanks to the fact that the $A$-module $K$ is unital. Moreover, since $_A K$ is torsion-free, using Proposition~\ref{isomorfismo clave} we may identify   $f (p)$ with  $\cdot \sum_ia_ik_i \in A\HOM_A(A,K)$ so that our $\varphi $ maps 
  $p\otimes f$ to $\cdot f(p) .$ Hence $Im\,\varphi\subseteq A\HOM_A(A,K)$ and $\varphi $ is well-defined.
  
  Now,  take arbitrary $a \in A$ and  $p\otimes f \in P\otimes_B B\cdot\HOM_A(P,K).$ Then  for any $x\in A$ we have that 
  $$(a \cdot \varphi (p \otimes f) ) (x)=
   \varphi (p \otimes f)  (xa) =  f(xap)=  xf(ap)=   \varphi (ap \otimes f)  (x),$$ 
   showing that  $\varphi $ is a left $A$-module map. 
   
  Next, for $p \in P_{\tau}, $ $f \in (B\cdot\HOM_A(P,K))_{\sigma}$ and $x \in A_{\rho},$ we  have that  $f(p) \in K_{\tau \sigma }$ and   
   $\varphi (p \otimes f) (x) = x f(p) \in K_{\rho \tau \sigma} ,$ which proves that    $\varphi $ is graded of degree $e,$ as $p \otimes f \in (P \otimes B\cdot\HOM_A(P,K) )_{\tau \sigma}.$

   In order to see that $\varphi $ is surjective let $f \in A\cdot\HOM_A(A,K) .$ By  Proposition~\ref{isomorfismo clave} $f = \cdot k $ for some $k \in K.$ Since  $_A P$ is a generator, By Remark~\ref{la traza de HOM} we may write $k = \sum f_i (p_i) $ for some $f_i \in \HOM_A(P,K) $ and $p_i \in P.$ Then $\sum \varphi (p_i \otimes f_i)(x) = \sum f_i (x p_i) =
   x \sum f_i ( p_i) = x k = f(x). $
   
   To complete the proof of (1), it remains to show that  $\ker \, \varphi $ is left $A$-torsion. Let $\sum p \otimes f \in \ker \, \varphi .$ Then
    \begin{equation}\label{eq:xfp zero}
    x \sum  f(p) =0
    \end{equation} for all $x \in A.$ Taking any $a \in A,$ write $a = \sum _i \pr{p_i}{q_i}$ with $p_i \in P$ and $q_i \in Q.$ Then 
   $$a \sum (p \otimes f) = \sum \sum_i \pr{p_i}{q_i}p \otimes f =  \sum _i  p_i \otimes \sum [q_i, p]  f $$ and since 
   $ \sum [q_i, p]  f  (y) =  \sum   f  (y [q_i, p]) = \pr{y}{q_i} \sum f(p)= 0$ for each $i$ and all $y \in P,$ 
   in view of \eqref{eq:xfp zero}. This means that $\sum [q_i, p]  f = 0$ and, since $A\in A$ was chosen arbitrarily, it follows that $\sum p \otimes f =0,$ completing  our proof of (1).

   (2) Taking arbitrary $b \in B,$ $l\in L$ and $p\in P.$ Since $B$ is idempotent,  $b = \sum b_1 b_2,$ which we write in a simplied form $b = b_1 b_2.$ Then 
   $$\gamma (b \otimes l) (p) = p b_1 b_2 \otimes l = \gamma (b_2 \otimes l) (p b_1) =   b_1 \gamma (b_2 \otimes l) (p ),$$  showing that $Im \, \gamma \subseteq B\cdot\HOM_A(P,P \otimes_BL).$

   Next $$\gamma (b' (b \otimes l))(p) = p b' b \otimes l= \gamma( b \otimes l) (p b')  = b' \gamma( b \otimes l) (p ),    $$
   for all $b,b' \in B,$ $l\in L$ and $p \in P,$ showing that $\gamma $ is a homomorphism of left $B$-modules.

   For arbitrary $b \in B_{\sigma}, $ $l \in L_{\rho}$ and $p \in P_{\tau}$ we have 
   $\gamma (b \otimes l) (p) = pb \otimes l \in (P \otimes L)_{\tau \sigma \rho,} $ so that $\gamma $ is graded of degree $e.$

   Now, take any $f \in \HOM_A(P,P \otimes_BL) $
   and $b \in B .$ Write 
   $b = \sum _i [q_i, p_i]$ with $q_i \in Q$ and $p_i \in P.$ Then 
   $$(bf)(x) = f(x \sum _i [q_i, p_i])=
  \sum _i f(\pr{x}{q_i}p_i)  
  =
  \sum _i \pr{x}{q_i} f(p_i).  $$ Writing 
  $f(p_i) = \sum _j p'_{ij} \otimes l_j,$ $p'_{ij} \in P, $ $l_j \in L,$ we compute, for any $x\in P,$ 
  \begin{align*}
    \gamma (\sum _{i,j}[q_i, p'_{ij}] \otimes l_j) (x) & = 
  \sum _{i,j}x[q_i, p'_{ij}] \otimes l_j =
  \sum _{i,j}\pr{x}{q_i} p'_{ij} \otimes l_j\\ &= \sum _{i}\pr{x}{q_i} \sum _j p'_{ij} \otimes l_j = \sum _{i}\pr{x}{q_i} f (p_i)  =(bf) (x),
  \end{align*}  showing that $\gamma $ is surjective.

	It remains to prove that $\ker \, \gamma $ is left $B$-torsion. Let $\sum b \otimes l \in \ker \, \gamma.$  Then
  \begin{equation}\label{eq: b tensor l in kernel}
    0=\gamma (\sum b \otimes l) (x) = \sum xb \otimes l
  \end{equation} for all $x \in P.$ Since 
  $[-,-]$ is surjective, write $b = \sum _i [q_i, p_i],$ with $q_i \in Q$ and $p_i \in P.$ Then for any $q' \in Q$ and $p' \in P$ we see that 

\begin{align*}
[q', p']\sum \sum _i (q_i \otimes _A p_i) \otimes l&= \sum \sum _i ([q', p']q_i \otimes _A p_i) \otimes l=
  \sum \sum _i (q' \pr{p'}{q_i}  \otimes _A p_i) \otimes l\\
  &= 
  \sum \sum _i (q'   \otimes _A \pr{p'}{q_i} p_i) \otimes  l = 
    \sum (q'   \otimes _A p'\sum _i [q_i, p_i]) \otimes  l\\
    &= 
   \sum (q'   \otimes _A p'b) \otimes  l = 
   q'   \otimes _A \sum (p'b \otimes  l) =0
\end{align*} thanks to \eqref{eq: b tensor l in kernel}. Since $q' $ and $p'$ were chosen arbitrarily and $[-,-]$ is surjective, we obatin that
$B \sum _i (q_i \otimes _A p_i) \otimes l =0 ,$ so that  $ \sum _i (q_i \otimes _A p_i) \otimes l $ is a left $B$-torsion element of $(Q \otimes _A P) \otimes _B L.$ Applying the left $B$-module map 
$$[-,-]\otimes 1_L: (Q \otimes _A P) \otimes _B L \to B \otimes _B L,$$ to $ \sum _i (q_i \otimes _A p_i) \otimes l , $ we conclude that our element  
$b \otimes l =  \sum _i [q_i , p_i] \otimes l $  is left $B$-torsion, as desired. \end{proof}

We shall need the next result, which  can be extracted from the proof of \cite[Proposition 2.6(iii)]{GS}. We give a complete argument  for the reader's convenience.

\begin{lemma}\label{gamma con el producto}
Let $A, B$ be idempotent  graded rings and $_A P_B$, $_BQ_A$  unital graded bimodules, such that there is a graded Morita context $(A,B,P,Q,\mu,\nu)$ with surjective trace maps. For any unital graded module $_A U$ we have that the map
\[ \eta: B\cdot\HOM _A(P,A)\otimes_AU\flecha B\cdot\HOM_A(P,U), \]
given by $\eta(bf\otimes u)(p)=((bf)(p))u,$ is a graded epimorphism of degree $e,$  with torsion kernel, of graded left $B$-modules.
\end{lemma}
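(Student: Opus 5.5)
I would first define $\eta$ on the whole of $\HOM_A(P,A)\times U$ by $(f,u)\mapsto \bigl(p\mapsto f(p)u\bigr)$. This map is clearly additive in each variable and $A$-balanced, because $(fa)(p)u=f(p)au$. It therefore induces a map on $\HOM_A(P,A)\otimes_A U$, which I then restrict to the subgroup generated by elements $bf\otimes u$. Each $\eta(f\otimes u)$ is left $A$-linear. If $f\in\HOM_A(P,A)_\sigma$ and $u\in U_\rho$, then $\eta(f\otimes u)$ maps $P_\tau$ into $U_{\tau\sigma\rho}$, so $\eta$ is graded of degree $e$. For left $B$-linearity, $(b'(bf))(p)u=(bf)(pb')u=\eta(bf\otimes u)(pb')$, which is the $B$-action on $\HOM_A(P,U)$. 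To see that the image lies in $B\cdot\HOM_A(P,U)$, I factor $b=b_1b_2$ (using $B^2=B$) and write $\eta(bf\otimes u)=b_1\,\eta(b_2f\otimes u)$.

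For surjectivity I take $b\in B$ and $g\in\HOM_A(P,U)$ and write $b=\sum_i[q_i,p_i]$. By the associativity \eqref{Asociatividad facil},
\[(bg)(x)=g\Bigl(\sum_i\pr{x}{q_i}p_i\Bigr)=\sum_i\pr{x}{q_i}g(p_i)=\eta\Bigl(\sum_i \psi(q_i)\otimes g(p_i)\Bigr)(x).\]
By Proposition~\ref{primeras propiedades del contexto}(3), $\psi(q_i)\in B\cdot\HOM_A(P,A)$. This gives $bg=\eta(\sum_i\psi(q_i)\otimes g(p_i))$, which is the same computation as in part (iv) of that proposition.

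The main obstacle is showing that the kernel is left $B$-torsion, because $B\cdot\HOM_A(P,A)$ need not be isomorphic to $Q$ in this generality. I would use the surjection $\psi\otimes 1:Q\otimes_A U\to B\cdot\HOM_A(P,A)\otimes_A U$. Take $z=\sum_j \psi(y_j)\otimes u_j$ in $\ker\eta$, so that $\sum_j\pr{x}{y_j}u_j=0$ for all $x\in P$. For any $b=[q',p']$,
\[bz=\sum_j\psi([q',p']y_j)\otimes u_j=\sum_j\psi(q'\pr{p'}{y_j})\otimes u_j=\psi(q')\otimes\sum_j\pr{p'}{y_j}u_j=0.\]
Here I use \eqref{la otra []q}, the right $A$-linearity of $\psi$, and the balancing of the tensor product. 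Since $[Q,P]=B$, this gives $Bz=0$. If elements of $B\cdot\HOM_A(P,A)$ are not a priori of the form $\psi(y)$, I still write each $bf=\psi(\sum_i q_if(p_i))$ as in the surjectivity step, so every element of the domain is a sum $\sum_j\psi(y_j)\otimes u_j$ and the argument applies.
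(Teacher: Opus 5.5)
Your proposal is correct and follows the paper's proof essentially step by step. It uses the same $B$-linearity and degree-$e$ checks, the same preimage $\sum_i\psi(q_i)\otimes g(p_i)$ for surjectivity, and the same torsion argument: you write kernel elements through the surjection $\psi$ of Proposition~\ref{primeras propiedades del contexto}(3) and use $[q',p']y=q'\pr{p'}{y}$ together with the balancing of the tensor product.

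One small point: $B\cdot\HOM_A(P,A)\otimes_A U$ need not embed in $\HOM_A(P,A)\otimes_A U$, so you should not ``restrict'' a map defined on the larger tensor product. Instead, define the balanced map directly on $B\cdot\HOM_A(P,A)\times U$, which is legitimate because $B\cdot\HOM_A(P,A)$ is a right $A$-submodule of $\HOM_A(P,A)$. With that change, your kernel computation holds verbatim in the correct tensor product.
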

\begin{proof}
To check $B$-linearity, take $b',b\in B$, $u\in U$, $f\in  B\cdot\HOM(P,A)$ and $p\in P$. Then $\eta(b'bf\otimes u)(p)=(b'bf)(p)u=(bf)(pb')u =\eta(bf\otimes u)(pb')=b'\eta(bf\otimes u)(p)$.   

Now, for  $b\in B_{\sigma}$, $u\in U_{\rho}$,  $f\in  B\cdot\HOM(P,A)_{\tau}$ and $p\in P_{\delta}$ we have that $\eta(bf\otimes u)(p)=(bf)(p)u\in U_{\delta\sigma\tau\rho}$, so that $\eta$ is a graded homomorphism of degree $e$.    

 We shall see that $\eta$ is onto,  proceeding as in the proof of Proposition~\ref{primeras propiedades del contexto}. Take arbitrary $b\in B$ and $g\in \HOM_A(P,U)$, and  write $b=\sum_i[q_i,p_i]$ with $q_i\in Q$ and $p_i\in P$.  Note that, for any $x\in P$ we have $$(bg)(x)=g(xb)=g(x\sum_i[q_i,p_i])=g(\sum_i\pr{x}{q_i}p_i)=\sum_ig(\pr{x}{q_i}p_i)=\sum_i \pr{x}{q_i}g(p_i).$$ In order to produce a preimage of $bg$ consider $\sum_i \pr{-}{q_i}\otimes g(p_i)=\sum_i \psi(q_i)\otimes g(p_i)\in  B\cdot\HOM(P,A)\otimes_AU$. Then, for any $x\in P$,
\[\eta\left(\sum_i \pr{-}{q_i}\otimes g(p_i)\right)(x) = \sum_i \pr{x}{q_i} g(p_i)=(bg)(x),   \] provig that $\eta $ is surjective.

Now we show that $\ker\eta$ is a torsion left $B$-module.
Suppose that there are $f\in \HOM(P,A)$, $u\in U$ and $b\in B$ sucht that $\eta(\sum bf\otimes u)=0$. 
 By (3) of Proposition~\ref{primeras propiedades del contexto}, using Notation~\ref{notation}, the map $\psi:Q\flecha A\cdot \HOM(P,A)$ is surjective, so that  there exists  $q\in Q$ such that $\psi(q)=bf. $ By the definition of $\eta$ we have $0= \psi(q) (p)u=\pr{p}{q}u$ for all $p\in P$.

Now for any $y\in Q$ and $x\in P$, having in mind that $\psi$ is a $\bimod{B}{A}$-bimodule map, we see that
\begin{eqnarray*}
[y,x]\sum (\psi(q)\otimes _A u)&=&\sum [y,x]\psi(q)\otimes u=\sum \psi([y,x]q)\otimes u=\sum \psi(y\pr{x}{q})\otimes u\\
&=& \sum \psi(y)\pr{x}{q}\otimes u=\psi(y)\otimes \sum \pr{x}{q} u=0.
\end{eqnarray*}
Since the trace map $\nu $ is surjective, it follows that   $B(\psi(q)\otimes u)=0,$ proving that the kernel is torsion.
\end{proof}

\begin{corollary}\label{cor:Hom elimina la torsion}
 Let $A, B$ be idempotent  graded rings and $_A P_B$, $_BQ_A$  unital graded bimodules, such that there is a graded Morita context $(A,B,P,Q,\mu,\nu)$ with surjective trace maps. For any unital graded module $_A U$ we have that
 \[ B\cdot\HOM _A(P,U)\cong B\cdot\HOM _A(P,U/t_A(U)) \] as graded left $B$-modules.
\end{corollary}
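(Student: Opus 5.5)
The plan is to realize the isomorphism as the map induced by the canonical projection $\pi:U\flecha U/t_A(U)$. Composition with $\pi$ gives a map $\pi_*:\HOM_A(P,U)\flecha \HOM_A(P,U/t_A(U))$, $f\mapsto \pi\circ f$. It is graded of degree $e$ because $\pi$ is graded of degree $e$. It is left $B$-linear because $(b(\pi\circ f))(p)=\pi(f(pb))=(\pi\circ (bf))(p)$. Consequently $\pi_*$ restricts to a graded homomorphism $B\cdot\HOM_A(P,U)\flecha B\cdot\HOM_A(P,U/t_A(U))$, and it remains to show that this restriction is bijective.

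\emph{Injectivity.} Take $g=\sum_i b_if_i\in B\cdot\HOM_A(P,U)$ with $\pi\circ g=0$, so that $g(P)\subseteq t_A(U)$. By the identity used in the proof of Lemma~\ref{gamma con el producto}, writing each $b_i=\sum_j[q_{ij},p_{ij}]$ gives
\[ g(x)=\sum_{i,j}\pr{x}{q_{ij}}f_i(p_{ij}) \quad \text{for all } x\in P. \]
Alternatively, we can use Lemma~\ref{gamma con el producto} directly: since $\eta$ is onto, write $g=\eta(\sum_k\psi(q_k)\otimes u_k)$, so that $g(x)=\sum_k\pr{x}{q_k}u_k$. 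There is a cleaner route, however. For any $b\in B$ and $x\in P$, write $b=\sum_i[q_i,p_i]$. Then $(bg)(x)=g(xb)=\sum_i\pr{x}{q_i}g(p_i)\in A\,t_A(U)=0$, so $Bg=0$. Since $\HOM_A(P,U)$ is $B$-torsion-free by Lemma~\ref{el HOM siempre es torsionfree}, this forces $g=0$.

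\emph{Surjectivity.} This is the step I expect to need the most care, since a homomorphism into the quotient need not lift to $U$. The fix is that we only need to lift elements of the form $bh$ with $h\in\HOM_A(P,U/t_A(U))$ and $b\in B$. Write $b=\sum_i[q_i,p_i]$; then $(bh)(x)=\sum_i\pr{x}{q_i}h(p_i)$. Choose preimages $u_i\in U$ with $\pi(u_i)=h(p_i)$, and define $g=\sum_i\psi(q_i)(-)\,u_i$, that is, $g(x)=\sum_i\pr{x}{q_i}u_i$. By Lemma~\ref{gamma con el producto}, $g=\eta(\sum_i\psi(q_i)\otimes u_i)\in B\cdot\HOM_A(P,U)$, because $\psi(q_i)\in B\cdot\HOM_A(P,A)$ by Proposition~\ref{primeras propiedades del contexto}(3). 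Clearly $\pi\circ g=bh$. For homogeneous $b$ and $h$, the $p_i,q_i$ can be taken homogeneous and the $u_i$ homogeneous of the appropriate degrees, so homogeneous elements lift and the isomorphism is graded. Taking sums handles arbitrary elements of $B\cdot\HOM_A(P,U/t_A(U))$, which completes the proof that the restriction of $\pi_*$ is a graded isomorphism of graded left $B$-modules.
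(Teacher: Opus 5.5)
Your proof is correct, but it takes a different route from the paper's.

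\textbf{The paper's route.} The paper never writes down a map between the two HOM-modules. It sets $H=B\cdot\HOM_A(P,A)$ and notes that $H_A$ is unital by Proposition~\ref{primeras propiedades del contexto}(3). Hence $H\otimes_A t_A(U)=0$, and right exactness of $H\otimes_A-$ gives $H\otimes_A U\cong H\otimes_A U/t_A(U)$. Lemma~\ref{gamma con el producto} supplies an epimorphism $\eta$ with $B$-torsion kernel onto a $B$-torsion-free module. This identifies both $B\cdot\HOM_A(P,U)$ and $B\cdot\HOM_A(P,U/t_A(U))$ with the torsion-free quotient of that tensor product.

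\textbf{Your route.} You show directly that composition with the projection $\pi$ is a bijective $B$-linear map of degree $e$. Injectivity comes from Lemma~\ref{el HOM siempre es torsionfree}. Surjectivity comes from the decomposition $b=\sum_i[q_i,p_i]$ together with lifting the values $h(p_i)$ through $\pi$.

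\textbf{What each buys.} Your argument gives an explicit isomorphism that is visibly natural in $U$. It uses only the easy part of Lemma~\ref{gamma con el producto}: that $x\mapsto\sum_i\pr{x}{q_i}u_i$ lies in $B\cdot\HOM_A(P,U)$. You could even check that directly from $Q=BQ$ and the $B$-balancedness of $\mu$. You never need the lemma's torsion-kernel assertion. The paper's route has the advantage of reusing the tensor-product machinery it needs anyway in Proposition~\ref{endomorfismos y producto tensorial como funtor}.

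\textbf{Small simplifications.}
\begin{enumerate}
\item Injectivity needs neither the trace maps nor Lemma~\ref{el HOM siempre es torsionfree}. If $g(P)\subseteq t_A(U)$, then $g(P)=g(AP)=A\,g(P)\subseteq A\,t_A(U)=0$. So $\pi_*$ is injective on all of $\HOM_A(P,U)$, and the Morita context is needed only for surjectivity.
\item The remark about lifting homogeneous elements is unnecessary. A bijective graded homomorphism of degree $e$ is automatically a graded isomorphism.
\item The exploratory ``alternatively'' sentences in your injectivity paragraph should be deleted.
\end{enumerate}
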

\begin{proof}
  Set $H=B\cdot\HOM(P,A)$ and  consider the exact sequence $0\flecha t_A(U)\flecha U\flecha U/t_A(U)\flecha 0$. Note that $H$ is a graded  right $A$-module under the action given by $(fa)(x)=f(x)a$,  $f\in H$, $a\in A,$  $x\in P.$ In fact,  $H$ is  unital as a graded right $A$-module, by Proposition~\ref{primeras propiedades del contexto}(3). 
 So that,  $H\otimes_A t_A(U)=0$. Now, by appling the right exact functor $H\otimes_A-$ to the above sequence, we get $H\otimes U\cong H\otimes U/t_A(U)$. By Lemma~\ref{gamma con el producto}, we are done. 
  \end{proof}

\begin{proposition}\label{endomorfismos y producto tensorial como funtor}
Let $A, B$ be idempotent torsion-free graded rings and $_A P_B$, $_BQ_A$ torsion-free  unital bimodules, such that there is a graded Morita context $(A,B,P,Q,\mu,\nu)$ with surjective trace maps. Then
\begin{enumerate}
\item The canonical graded maps $A\flecha \END(_BQ)$, $A\flecha \END(P_B)$, $B\flecha \END(Q_A)$ and $B\flecha \END(_AP)$ induce graded ring isomorphisms $A\cong A\cdot \END(_BQ)$, $A\cong \END(P_B)\cdot A$, $B\cong \END(Q_A)\cdot B$ and $B\cong B\cdot\END(_AP)$.
\item The functors $P\otimes_B -/t_A(P\otimes_B -):B\gr \flecha A\gr$ and $B\cdot \HOM_A(P,-):A\gr\flecha B\gr$ are graded inverse equivalences of categories.
\item The functors $P\otimes_B -/t_A(P\otimes_B -):B\gr \flecha A\gr$ and $Q\otimes_A -/t_B(Q\otimes_A -):A\gr \flecha B\gr$ are graded inverse equivalences of categories.
\end{enumerate}
There are also graded category equivalences between $\grd A$ and $\grd B$ similar to those in (2) and (3).
\end{proposition}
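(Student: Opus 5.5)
For (1) I will treat $A\flecha A\cdot\END(_BQ)$; the other three cases are symmetric. The map $a\mapsto(q\mapsto qa)$ is a graded ring homomorphism, and it is injective because $Q_A$ is torsion-free and non-degenerate (Proposition~\ref{primeras propiedades del contexto}(5)): if $Qa=0$ then $P\pr{P}{Q}a=0$ forces $Aa=0$, hence $a=0$ since $A$ is torsion-free. Its image lies in $A\cdot\END(_BQ)$ because $a=\sum a_1a_2$. For surjectivity, take $a\in A$ and $h\in\END(_BQ)$. Using $a=\sum_i\pr{p_i}{q_i}$, for $y\in Q$ we get
$$y(ah)=(ya)h=\Big(\sum_i[y,p_i]q_i\Big)h=\sum_i[y,p_i](q_ih),$$
and $[y,p_i](q_ih)=y\pr{p_i}{q_ih}$. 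Hence $ah$ is right multiplication by $\sum_i\pr{p_i}{q_ih}\in A$.

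For (2) set $F=B\cdot\HOM_A(P,-)$ and $G=P\otimes_B-/t_A(P\otimes_B-)$. By Lemma~\ref{el HOM siempre es torsionfree}, $F$ lands in $B\gr$, and $G$ clearly lands in $A\gr$. Both functors commute with suspensions, since $\HOM_A(P,M(\sigma))=\HOM_A(P,M)(\sigma)$ and $P\otimes_B L(\sigma)=(P\otimes_BL)(\sigma)$. For $GF\cong 1$, take $K\in A\gr$. Proposition~\ref{hom y tor}(1) gives $\varphi:P\otimes_B F(K)\flecha A\cdot\HOM_A(A,K)\cong K$ (Proposition~\ref{isomorfismo clave}), which is surjective with torsion kernel. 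Since the target is torsion-free, $\ker\varphi=t_A(P\otimes_BF(K))$ exactly: the torsion submodule maps to torsion in the target, which is $0$. Hence $G F(K)\cong K$, and naturality follows from that of $\varphi$ and $\chi$.

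For $FG\cong1$, take $L\in B\gr$. Corollary~\ref{cor:Hom elimina la torsion} gives $F(G(L))\cong B\cdot\HOM_A(P,P\otimes_BL)$, and Proposition~\ref{hom y tor}(2) gives an epimorphism $B\otimes_BL\flecha B\cdot\HOM_A(P,P\otimes_BL)$ with torsion kernel. The target is torsion-free, so this map induces $(B\otimes_BL)/t_B(B\otimes_BL)\cong FG(L)$. It remains to show that $(B\otimes_BL)/t_B\cong L$ via multiplication $b\otimes l\mapsto bl$. This map is onto because $L$ is unital. Its kernel is torsion: if $\sum b\otimes l\mapsto0$, then $b'\sum b\otimes l=\sum b'b\otimes l=\sum b'_1\otimes b'_2bl=0$ after writing $b'=\sum b'_1b'_2$, using $B^2=B$. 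Since $L$ is torsion-free, the kernel equals the torsion part, and naturality is routine. The main obstacle is this bookkeeping: checking that the composite isomorphisms are natural and compatible with the corollary's isomorphism, which the corollary only states object-wise.

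For (3) I will show $Q\otimes_A-/t_B\cong B\cdot\HOM_A(P,-)$ on $A\gr$, which follows from Lemma~\ref{gamma con el producto} composed with $\psi$. Since $\psi:Q\flecha B\cdot\HOM_A(P,A)$ is a bimodule isomorphism by Proposition~\ref{primeras propiedades del contexto}(4), $Q\otimes_AU\cong B\cdot\HOM_A(P,A)\otimes_AU$, and this maps onto $B\cdot\HOM_A(P,U)$ with torsion kernel. The target is torsion-free, so modding out by torsion gives a natural isomorphism, and then (3) follows from (2). The right-module statements follow by the symmetric arguments.
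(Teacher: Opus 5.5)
Your proposal is correct and follows the paper's proof essentially step for step. In (1) you use the same computation $ah=\cdot\sum_i\pr{p_i}{q_ih}$; in (2) you use the maps $\varphi$ and $\gamma$ of Proposition~\ref{hom y tor} together with Corollary~\ref{cor:Hom elimina la torsion}; and in (3) you compose $\psi\otimes 1_U$ with the map of Lemma~\ref{gamma con el producto}. The only blemish is the typo $P\pr{P}{Q}a=0$ in your injectivity argument, which should read $\pr{P}{Q}a=\pr{P}{Qa}=0$. With that fixed, your faithfulness-based injectivity argument and your explicit check that $B\otimes_BL\to L$ has torsion kernel are more careful than the paper's one-line justifications of those points.
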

\begin{proof}
(1) We prove the first isomorphism. The others are similar. As $_BQ_A$ is torsion-free at both sides, the right multiplication map by elements of $A$ is injective. To see surjectivity, take $a\in A$ and $f\in \END(_B Q)$. Now, write $a=\sum_i\pr{p_i}{q_i}$ with $p_i\in P$ and $q_i\in Q$. Then for any $x\in Q$ we have
\begin{eqnarray*}
(af)(x)&=&f(xa)=f(x\sum_i\pr{p_i}{q_i})=f(\sum_i [x,p_i]q_i)=\sum_i [x,p_i]f(q_i)= \\ 
&=&\sum_i x\pr{p_i}{f(q_i)}=x\sum_i\pr{p_i}{f(q_i)}
\end{eqnarray*}
so $af=\cdot \sum_i\pr{p_i}{f(q_i)}\in A$.

	(2) Let $K\in A\gr$ and  
	consider the graded surjective map $\varphi$ from Proposition~\ref{hom y tor}.  Then we have
	\begin{eqnarray*}
		P\otimes _B B\cdot \HOM_A(P,K) \flechadecor{\varphi} 
		A\cdot\HOM(A,K)\cong K,
	\end{eqnarray*}
	where the last graded isomorphism is $\chi_K,$ given by  Proposition~\ref{isomorfismo clave}. Since  $\varphi$ has torsion kernel and $K$ is torsion-free, we obtain that
	$\ker\varphi=t_A(P\otimes _B B\cdot \HOM_A(P,K))$ and, consequently, 
	$P\otimes _B  B\cdot \HOM_A(P,K)/ t_A(P\otimes _B B\cdot \HOM_A(P,K)) \cong  K. $
	
	On the other hand, for $L\in B\gr$ we have by (2) of Proposition~\ref{hom y tor} the graded epimorphism
	\[ B\otimes_B L \flechadecor{\gamma}B\cdot \HOM_A(P,P\otimes_B L),\]
	whose kernel is torsion.
	By Lemma~\ref{el HOM siempre es torsionfree} we know that $B\cdot \HOM_A(P,P\otimes_B L)$ is torsion-free, so that the kernel of $\gamma $ is exactly $t_B(B \otimes_B L).$ Similarly, it is easy to see that the kernel of the canonical map $B \otimes _B L \to L$ is torsion-free and, since $L$ is torsion-free, the kernel  is $t_B(B \otimes_B L).$ Therefore,
	$$L\cong B\otimes_B L/t_B(B \otimes_B L)\cong  B\cdot \HOM_A(P,P\otimes_B L)\cong  B\cdot \HOM_A(P,\frac{P\otimes_B L}{t_A(P\otimes_B L)}),$$ where the last isomorphism comes from Corollary\ref{cor:Hom elimina la torsion}. This gives us (2).
	
(3) By (2) it is enough to prove that $Q\otimes_A -/t_A(Q\otimes_A -)$ is graded isomorphic to $B\cdot\HOM_A(P,-)$. By (4) of Proposition~\ref{primeras propiedades del contexto}, for any $U\in A\gr$ we have that $Q\otimes_A U\cong B\cdot\HOM_A(P,A)\otimes_A U$. By Lemma~\ref{gamma con el producto} the composition of this isomorphism with $\eta$, that is $Q\otimes_A U\flecha B\cdot\HOM_A(P,U),$ is surjective with torsion kernel. Since $ B\cdot\HOM_A(P,U)$ is torsion-free, we obtain that $Q\otimes_A U/t_B(Q\otimes_A U)\cong B\cdot\HOM_A(P,U),$ completing our  proof.
\end{proof}

\begin{remark}\label{rem:item(2)NoSiFaltaSinTorcion} 
  Observe that in the proof of item (2) of 
  Proposition~\ref{endomorfismos y producto tensorial como funtor} we did not use  neither the restrictions on  $A$ and $B$ of being torsion-free, nor the facts that  the bimodules $P$ and $Q$ are torsion-free. 
\end{remark}

\begin{remark}\label{remark:NaturalGradedIsos} As we have shown in the proof of Proposition~\ref{endomorfismos y producto tensorial como funtor}, there is a natural graded isomorphism
  $P\otimes_B -/t_A(P\otimes_B -)\cong  A \cdot \HOM_B(Q,-)  $. 
Analogously, there is a natural graded isomorphism
 $Q\otimes_A -/t_B(Q\otimes_A -)\cong B\cdot \HOM_A(P,-) $.
\end{remark}

Summarizing the results of this section we have the following theorem (see \cite[Proposition 2.6]{GS}).

\begin{theorem}\label{contextos a equivalencias final}
Let $A, B$ be idempotent torsion-free graded rings and $_A P_B$, $_BQ_A$ torsion-free  unital bimodules, such that there is a graded Morita context $(A,B,P,Q,\mu,\nu)$ with surjective trace maps. Then
\begin{enumerate}
\item $_A P$, $P_B$, $_BQ$ and $Q_A$ are generators of their corresponding categories.
\item The canonical graded maps $A\flecha \END(_BQ)$, $A\flecha \END(P_B)$, $B\flecha \END(Q_A)$ and $B\flecha \END(_AP)$ induce graded ring isomorphisms $A\cong  A\cdot\END(_BQ)$, $A\cong  \END(P_B)\cdot A$, $B\cong  \END(Q_A)\cdot B$ and $B\cong  B\cdot\END(_AP)$.
\item 
The functors
\begin{itemize}
 \item[] $P\otimes_B -/t_A(P\otimes_B -):B\gr \flecha A\gr$ and 
 \item[] $Q\otimes_A -/t_B(Q\otimes_A -):A\gr \flecha B\gr$ 
\end{itemize} 
are inverse graded equivalences of categories.
\item There are graded natural isomorphisms:
\begin{enumerate}
 \item $P\otimes_B -/t_A(P\otimes_B -)\cong  A \cdot \HOM_B(Q,-)  $.
 \item $Q\otimes_A -/t_B(Q\otimes_A -)\cong B\cdot \HOM_A(P,-) $.
\end{enumerate}
\item There are graded isomorphisms $$_B Q\cong B\cdot \HOM_A(P,A)  \cong Q\otimes_A A/t_B(Q\otimes_A A),$$ and similar isomorphisms hold  for $Q_A$, $_AP$ and $P_B$.
\item The graded tensor products $P\otimes_B Q$ and $Q\otimes_A P$ have a structure of a $\Gamma$-graded rings given by
\[(p_1\otimes q_1)(p_2\otimes q_2)=p_1\otimes[q_1,p_2]q_2\quad\text{and}\quad (q_1\otimes p_1)(q_2\otimes p_2)=q_1\pr{p_1}{q_2} \otimes p_1.\]
\end{enumerate}
\end{theorem}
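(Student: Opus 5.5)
Most items of Theorem~\ref{contextos a equivalencias final} are already proved earlier in this section, so the plan is to collect them and only do new work for (5) and (6).

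For (1), use Proposition~\ref{primeras propiedades del contexto}(1). It gives that $P/t_A(P)$, $P_B/t_B(P)$, $Q/t_B(Q)$ and $Q_A/t_A(Q)$ are generators. Since all bimodules are now assumed torsion-free, these quotients are $P$ and $Q$ themselves. Also $P,Q$ lie in $A\gr$, $\grd B$, $B\gr$, $\grd A$, because they are unital and torsion-free. Alternatively, since $A$ and $B$ are torsion-free, Remark~\ref{traza sobe el anillo hace generador} reduces everything to $Tr_A(P)=A$, which was shown in that proof.

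Items (2), (3) and (4) are Proposition~\ref{endomorfismos y producto tensorial como funtor}(1), Proposition~\ref{endomorfismos y producto tensorial como funtor}(3) and Remark~\ref{remark:NaturalGradedIsos}, respectively. These are stated under exactly the present hypotheses.

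For (5), take the first isomorphism $_BQ\cong B\cdot\HOM_A(P,A)$ from Proposition~\ref{primeras propiedades del contexto}(4). For the second, apply (4)(b) to $U=A\in A\gr$, which is allowed because $A$ is torsion-free. This gives $Q\otimes_A A/t_B(Q\otimes_A A)\cong B\cdot\HOM_A(P,A)$. The isomorphisms for $Q_A$, $_AP$ and $P_B$ follow symmetrically, using the other three isomorphisms in Proposition~\ref{primeras propiedades del contexto}(4) together with (4)(a) and the right-module versions.

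For (6), which is the only genuinely new item, define the product on $P\otimes_B Q$ by $(p_1\otimes q_1)\cdot x=p_1\otimes[q_1,-]\ldots$. More concretely, I would define the multiplication as the composite
\[(P\otimes_BQ)\otimes(P\otimes_BQ)\to P\otimes_B(Q\otimes_AP)\otimes_BQ\xrightarrow{1\otimes\nu\otimes1}P\otimes_BB\otimes_BQ\to P\otimes_BQ.\]
The main obstacle is well-definedness, namely checking balancedness in each slot. This is handled by the bilinearity of $\nu$ and by the associativity \eqref{condiciones asoc Contextos}. For example, $p_1\otimes[q_1,p_2]q_2=p_1\otimes q_1\pr{p_2}{q_2}=p_1\pr{\ldots}{}$, which shows the product equals $p_1\otimes q_1\mu(p_2\otimes q_2)$ and is visibly balanced in the second factor. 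I would use this alternative expression to get well-definedness on both factors. Associativity then reduces to $[q_1,p_2][q_2,p_3]=[q_1,p_2[q_2,p_3]]$, which is $B$-bilinearity of $\nu$. Gradedness holds since $\nu$ has degree $e$, so the degree of the product is the product of the degrees. The statement for $Q\otimes_AP$ is symmetric using $\mu$, with the formula read as $q_1\pr{p_1}{q_2}\otimes p_2$.
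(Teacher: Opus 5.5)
Your proposal is correct and is essentially the paper's proof: items (1)--(5) come from the same sources (Proposition~\ref{primeras propiedades del contexto}, Proposition~\ref{endomorfismos y producto tensorial como funtor} and Remark~\ref{remark:NaturalGradedIsos}). For (6), which the paper leaves as a straightforward calculation, your composite through $1\otimes\nu\otimes 1$ already gives well-definedness, the product being $x\cdot y=\mu(x)y=x\mu(y)$ since $p_1\otimes[q_1,p_2]q_2=\pr{p_1}{q_1}p_2\otimes q_2=p_1\otimes q_1\pr{p_2}{q_2}$; you also correctly read the last factor of the product on $Q\otimes_AP$ as $p_2$ (the printed $p_1$ is a typo), and you should clean up the garbled chain ending in an incomplete bracket in your (6).
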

\begin{proof}
 The assertions (1) to (5) follow from Propositions~\ref{primeras propiedades del contexto} and \ref{endomorfismos y producto tensorial como funtor} and Remark~\ref{remark:NaturalGradedIsos}. The proof of (6) is an easy straightforward calculation.
\end{proof}

The following theorem is a simple and very usefull characterization of graded category equivalences.

\begin{theorem}\label{teo:UsefulCharacterization}
 Let $A, B$ be idempotent torsion-free graded rings. Then $A\gr$ and $B\gr$ are graded equivalent categories if and only if there exists a torsion-free  unital graded bimodule $_AP_B$ such that:
 \begin{enumerate}
  \item $_AP$ and $P_B$ are graded generators of their respective categories.
  \item The canonical graded map $B\flecha \END(_AP)$ induces a graded ring isomorphism $B\cong  B\cdot\END(_AP)$.
 \end{enumerate}
Moreover, in this case, the functors $P\otimes_B -/t_A(P\otimes_B -):B\gr \flecha A\gr$ and $B\cdot \HOM_A(P,-):A\gr\flecha B\gr$ are graded inverse equivalences of categories.
\end{theorem}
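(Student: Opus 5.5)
For the direct implication, assume $F:A\gr\flechag B\gr$ and $G:B\gr\flechag A\gr$ are inverse graded equivalences, and take $P=G(B)$. By Theorem~\ref{Car Equiv} this $P$ is a unital graded bimodule, torsion-free on both sides. Theorem~\ref{Car Equiv} also provides a graded Morita context $(A,B,P,Q,\mu,\nu)$ with $Q=F(A)$ and surjective trace maps. Moreover $_BQ_A$ is torsion-free and unital. Hence Theorem~\ref{contextos a equivalencias final} applies to this context. Part (1) of that theorem gives that $_AP$ and $P_B$ are graded generators, and part (2) gives the isomorphism $B\cong B\cdot\END(_AP)$ induced by the canonical map.

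For the converse, the plan is to build $Q$ and a Morita context out of $P$, and then invoke Proposition~\ref{endomorfismos y producto tensorial como funtor}(2). Set $Q=B\cdot\HOM_A(P,A)$, with right $A$-action $(fa)(x)=f(x)a$. By Lemma~\ref{el HOM siempre es torsionfree}, $Q$ is left $B$-torsion-free, and it is left unital since $B$ is idempotent. Right torsion-freeness holds because $fA=0$ forces $f(x)A=0$, so $f(x)\in r(A)=0$. Right unitality should follow from $A=A^2$ together with $f(xa)$-type manipulations; if not immediate, I would simply work with $Q\cdot A$.

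The trace maps are $\pr{p}{f}=f(p)\in A$ and $[f,p]=$ the endomorphism $x\mapsto f(x)p$ of $_AP$. Multiplying on the left by elements of $B$, these endomorphisms land in $B\cdot\END(_AP)\cong B$ by hypothesis (2). Both associativity conditions are routine, and gradedness follows degree by degree.

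The surjectivity of the trace maps is the main point:
\begin{itemize}
\item[(a)] $\mu$ is onto because $_AP$ is a generator, so $Tr_A(P)=A$ by Corollary~\ref{generador y traza} and Remark~\ref{la traza de HOM}. Each $f(p)$ can then be rewritten with $p=\sum p_jb_j$, exactly as in the surjectivity argument for $\mu$ in Section~\ref{ SecMorita contexts}.
\item[(b)] $Im\,\nu$ is a two-sided ideal of $B$, and $P\cdot Im\,\nu\supseteq\pr{P}{Q}P=AP=P$.
\item[(c)] The hard step is deducing $Im\,\nu=B$. I would use that $P_B$ is a generator, so $Tr_B(P_B)=B$: any $b=\sum g_i(p_i)$ with $g_i\in\HOM_B(P,B)$. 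Then I would show $Bg_iB$-type elements lie in $Im\,\nu$, using that $bg$ corresponds via $B\cong B\cdot\END(_AP)$ to an endomorphism that factors through $P\cdot Im\,\nu=P$. Alternatively, I would mimic the $\END(_BB)Im\,\nu=\END(_BB)B$ argument after Lemma~\ref{PJ es P}, with $\END(_AP)$ in place of $\END(_BB)$. This requires showing that $J=\END(_AP)\,Im\,\nu$ satisfies $PJ=P$, which forces $B\subseteq J$.
\end{itemize}

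Once the context has surjective trace maps, Proposition~\ref{endomorfismos y producto tensorial como funtor}(2), together with Remark~\ref{rem:item(2)NoSiFaltaSinTorcion}, yields that $P\otimes_B-/t_A(P\otimes_B-)$ and $B\cdot\HOM_A(P,-)$ are inverse graded equivalences. This proves both the converse and the ``moreover'' clause.
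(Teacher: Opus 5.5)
Your overall plan is the paper's. The forward direction goes through Theorems~\ref{Car Equiv} and~\ref{contextos a equivalencias final}, and the converse builds the context from $Q=B\cdot\HOM_A(P,A)$, $\pr{p}{f}=f(p)$ and $[f,p]=f(-)p$. Your right torsion-freeness check for $Q$ is a detail the paper skips.

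\textbf{The gap is step (c).} This is the only non-routine step, and as written neither of your routes settles it.

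\emph{Second route.} It does not work as stated. In Lemma~\ref{PJ es P}, the implication ``$P\cdot J=P\Rightarrow B\subseteq J$'' was proved by applying the equivalence $F$ to $G(B)=G(BJ)$. In the converse direction no equivalence exists yet. Moreover, for $J=\END(_AP)\,Im\,\nu$ that implication is essentially equivalent to $Im\,\nu=B$, given $B\cdot\END(_AP)=B$ and $B=B^2$. So invoking it is circular.

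\emph{First route.} It starts correctly, with $b=\sum_i g_i(p_i)$ and $g_i\in\HOM_B(P,B)$. After that, however, the phrases ``$Bg_iB$-type elements'' and ``$bg$ corresponds to an endomorphism that factors through $P\cdot Im\,\nu$'' do not amount to an argument.

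\textbf{The missing line.} It uses only what you already established in (b). Since $P=P\cdot Im\,\nu$, write $p_i=\sum_j p_{ij}v_{ij}$ with $v_{ij}\in Im\,\nu$. By hypothesis (2), each $v_{ij}\in B\cdot\END(_AP)$ acts on $P$ as right multiplication by an element of $B$. Right $B$-linearity of $g_i$ and the fact that $Im\,\nu$ is an ideal of $B$ then give
\[
g_i(p_i)=\sum_j g_i(p_{ij})\,v_{ij}\in B\cdot Im\,\nu\subseteq Im\,\nu ,
\]
so $B=Im\,\nu$. This is exactly the paper's argument, which it phrases through $Tr_{B\cdot\END(_AP)}(P)=B\cdot\END(_AP)$.

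\textbf{Right unitality.} Once $\nu$ is onto, your hedge about right unitality of $Q$ is unnecessary. For $b=\sum_i[q_i,p_i]$ and $f\in\HOM_A(P,A)$ we have
\[
(bf)(x)=f\bigl(\textstyle\sum_i q_i(x)p_i\bigr)=\sum_i q_i(x)f(p_i),
\]
that is, $bf=\sum_i q_i\,f(p_i)\in QA$. None of (a)--(c) used right unitality, so there is no circularity here.
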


\begin{proof}
 Suppose first that $F:A\gr\flechag B\gr$ and  $G:B\gr\flechag A\gr$ are inverse graded category equivalences. Then with  $P=G(B)$ items (1) and (2) follow from  Theorem~\ref{Car Equiv} and Theorem~\ref{contextos a equivalencias final}.
 
 Conversely, suppose that (1), (2) hold and set $Q=B\cdot \HOM_A(P,A)\subset \HOM_A(P,A)$. Recall that  the right action of $A$ on $Q$ is given by $(fa)(x)=f(x)a$,  $f\in Q$, $a\in A,$  $x\in P,$ whereas the left $B$-action on $Q$ 
 is defined by $bf (x) = f(xb), $ $b \in B,$  endowing $Q$ with a unital graded  $\bimod{B}{A}$-bimodule structure. Moreover, $_BQ_A$ is a torsion-free bimodule by Lemma~\ref{el HOM siempre es torsionfree}.
 Now, it is easy to see that we may define graded bimodule maps such that, for $p,x\in P$ and $q\in Q$,
 \begin{eqnarray*}
  \mu:P\otimes_B Q\flecha A &\text{given by}& \mu(p\otimes q)=q(p),\\
  \nu:Q\otimes_A P \flecha \END(_AP) &\text{given by}& \nu(q\otimes p)(x)=q(x)p.
 \end{eqnarray*}
Condition (1) immediately implies  $Im\,\mu =A .$ Next we show that $Im\,\nu=B\cdot \END(_AP)$. Since $\nu $ is $B$-bilinear, we have  $Im\,\nu \subseteq B\cdot \END(_AP)$. For the converse inclusion observe that $P \cdot Im\,\nu =P.$ Indeed, since $\mu $ is onto, for any $p \in P$ we may write 
$$p = \sum _i a_i p_i= \sum _i  \sum _j q_{ij}(x_{ij}) p_i = \sum_{ij} \nu (q_{ij} \otimes p_i)(x_{ij}) = 
x_{ij} \cdot \sum_{ij} \nu (q_{ij} \otimes p_i),$$ where $a_i \in A, p_i, x_{ij} \in P, $ $q_{ij} \in Q,$ so that  $ P \subseteq P \cdot Im\,\nu .  $ Because the converse inclusion is obvious, we obtain the desired equality $P \cdot Im\,\nu =P.$

Note that thanks to the fact that  $P_B$ is a  generator, it follows from  condition (2) that   
$Tr_{B\cdot \END(_AP)}(P) = B\cdot \END(_AP).$ Therefore, taking an arbitrary 
 $\bar{b} \in B\cdot \END(_AP),$ we may write 
 $\bar{b} = \sum _i h_i (x_i)$ for some $h_i \in  \HOM(P_B,B\cdot \END(_AP)),$ $x_i \in P.$ Using the  equality $P \cdot Im\,\nu =P,$ we also write $x_i = \sum _j p_{ij} \cdot v_{ij}, $ where $p_{ij}\in P$ and $v_{ij} \in Im\,\nu .$ Consequently, 
 $$\bar{b} = \sum _i h_i (\sum _j p_{ij} \cdot v_{ij})=\sum _{ij} h_i ( p_{ij} ) v_{ij} \in  Im\,\nu ,$$ which proves the converse inclusion $ B\cdot \END(_AP) \subseteq Im\,\nu ,$ establishing the desired equality   $Im\,\nu=B\cdot \END(_AP)$. 
 
 The properties of $\mu $ and $\nu $ which gives a  Morita context 
  $(A,B,P,Q,\mu,\nu)$ are easily verified.   Thus, we have  a graded Morita context with surjective trace maps, and we are done by   Theorem~\ref{contextos a equivalencias final}.
\end{proof}

\begin{corollary}
  Let $A, B$ be idempotent torsion-free graded rings. Then $A\gr$ and $B\gr$ are graded equivalent categories if and only if the categories $\grd A$ and $\grd B$ are graded equivalent.
\end{corollary}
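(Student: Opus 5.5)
The corollary follows from Theorem~\ref{Car Equiv} together with the left–right symmetric content of Proposition~\ref{endomorfismos y producto tensorial como funtor}. The idea is that a graded Morita context with surjective trace maps is a left–right symmetric datum, so it yields equivalences on both sides at once.

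For the direct implication, assume $F:A\gr\flechag B\gr$ and $G:B\gr\flechag A\gr$ are inverse graded equivalences. Theorem~\ref{Car Equiv} then produces a graded Morita context $(A,B,P,Q,\mu,\nu)$ with $P=G(B)$ and $Q=F(A)$, and the trace maps are surjective. By Theorem~\ref{Car Equiv}(2), both bimodules are unital and torsion-free on both sides: $_AP\in A\gr$, $P_B\in\grd B$, $_BQ\in B\gr$ and $Q_A\in\grd A$. So all the hypotheses of Proposition~\ref{endomorfismos y producto tensorial como funtor} hold.

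The last sentence of that proposition asserts the right-hand analogues of (2) and (3). Concretely, the functors
\[
-\otimes_A P/t_B(-\otimes_A P):\grd A\flecha \grd B
\quad\text{and}\quad
-\otimes_B Q/t_A(-\otimes_B Q):\grd B\flecha \grd A
\]
are inverse graded equivalences. Equivalently, one may use $\HOM_A(Q,-)\cdot B$. This gives the graded equivalence of $\grd A$ and $\grd B$.

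The main point to check is that this right-hand version genuinely holds, since the paper only says ``similar''. I would verify it by rerunning the mirror images of Proposition~\ref{hom y tor}, Lemma~\ref{gamma con el producto} and Corollary~\ref{cor:Hom elimina la torsion}, with the roles of left and right swapped. All the ingredients are symmetric. The associativity identities \eqref{condiciones asoc Contextos} are symmetric, surjectivity of $\mu$ and $\nu$ is a two-sided condition, and the torsion-kernel arguments use only these identities. The one point needing care is the grading convention: for right modules, a homomorphism of degree $\sigma$ satisfies $f(M_\tau)\subseteq N_{\sigma\tau}$. One must confirm that the right-hand functors commute with suspension, and this follows exactly as in the left case.

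For the converse, I would avoid repeating the whole construction and instead apply the already proved direction to opposite rings. The opposite ring $A^{op}$, graded by $\Gamma^{op}$, is again idempotent and torsion-free. Composing with the inversion map $\sigma\mapsto\sigma^{-1}$ turns it into a $\Gamma$-graded ring. Under this regrading, $\grd A$ is identified with $A^{op}\gr$, compatibly with suspensions up to replacing $\sigma$ by $\sigma^{-1}$. A graded equivalence $\grd A\simeq\grd B$ therefore becomes a graded equivalence $A^{op}\gr\simeq B^{op}\gr$. The direct implication then gives $\grd A^{op}\simeq\grd B^{op}$, which is $A\gr\simeq B\gr$.

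Alternatively, the converse can be done directly by writing out the right-hand versions of Sections~\ref{Sec Morita Equiv} and~\ref{ SecMorita contexts}. These produce a context with surjective trace maps, and then Theorem~\ref{contextos a equivalencias final}(3) gives the left-hand equivalence.
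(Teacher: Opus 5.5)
Your proposal is correct, and its core is the same as the paper's. The graded Morita context that Theorem~\ref{Car Equiv} extracts from $F,G$ is left--right symmetric, so it also produces an equivalence of the right module categories.

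The routing of the forward direction differs slightly. The paper reads off from Theorem~\ref{contextos a equivalencias final} that $Q_A$ generates $\grd A$, $_BQ$ generates $B\gr$ and $B\cong\END(Q_A)\cdot B$. It then appeals to a right-hand version of Theorem~\ref{teo:UsefulCharacterization} for the single bimodule $_BQ_A$; unwound, that version rebuilds a context from these data and returns to Theorem~\ref{contextos a equivalencias final}. You plug the context you already have straight into the right-hand clause of Proposition~\ref{endomorfismos y producto tensorial como funtor}, which skips that detour. The paper's phrasing has the advantage of stating the conclusion through intrinsic properties of one bimodule.

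The paper writes out only the forward implication and leaves the converse to symmetry. Your opposite-ring argument supplies it correctly, using $(A^{op})_\sigma=A_{\sigma^{-1}}$ and the fact that right suspension by $\sigma$ corresponds to left suspension by $\sigma^{-1}$.

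The same dictionary also spares you from rerunning the mirror images of Section~\ref{sec:FromMoritaContextToEquiv}. Take $(A^{op},B^{op},Q,P,\mu^{op},\nu^{op})$, with all gradings inverted, $\mu^{op}(q\otimes p)=\pr{p}{q}$ and $\nu^{op}(p\otimes q)=[q,p]$. This is again a graded Morita context of torsion-free unital bimodules with surjective trace maps. Proposition~\ref{endomorfismos y producto tensorial como funtor}(3) applied to it is exactly the equivalence $\grd A\simeq\grd B$ you need. Both directions then rest only on results the paper actually proves.
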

\begin{proof} Suppose that $A\gr$ and $B\gr$ are graded equivalent categories. Then tanks to Theorem~\ref{Car Equiv}, there exists a graded Morita context  $(A,B,P,Q,\mu,\nu)$ with surjective trace maps. Then by Theorem~\ref{contextos a equivalencias final} $Q_A$ is a generator of $\grd A,$ $_B Q$ is a generator of $B\gr $ and the canonical graded map $B\flecha \END(Q_A)$  induces a  graded ring isomorphism  $B\cong  \END(Q_A)\cdot B .$ Then a symmetric version of Theorem~\ref{teo:UsefulCharacterization} gives us a graded equivalence between  
   $\grd A$ and $\grd B .$ 
\end{proof}

\section{Generalization to idempotent not necessarily torsion-free graded rings}\label{sec:Generalization}

  We consider a graded Morita context $(A,B,P,Q,\mu,\nu)$ in which the graded rings $A$ and $B$ are idempotent and the bimodules $_A P_B$ and $_B Q_A$ are unital, but we do not suppose that any of them is torsion-free.
  
% \red{ ESTO NO HACE FALTA Note that, in this case, the categories of graded unital torsion-free modules $A\gr$, $B\gr$, $\grd A$ and $\grd B$ not necessarily contain any of $A,\,B,\,P,\,Q$. }
  
 % \red{ ESTO YA SE DIJO For rings we have to distinguish between left or right torsion. So we denote the left torsion of $A$ by $l(A)$ and the right one by  $r(A).$ Clearly we have that $A/l(A)\in A\gr$ etc.}
    
  We give the following result without assuming any kind of torsion-free condition.
  
  \begin{theorem}\label{teo:Generalization}
  	Let $A, B$ be idempotent graded rings and $_A P_B$, $_BQ_A$  unital bimodules, such that there is a graded Morita context $(A,B,P,Q,\mu,\nu)$ with surjective trace maps. Then
  	\begin{enumerate}
  		\item The canonical graded maps $A\flecha \END(_BQ)$, $A\flecha \END(P_B)$, $B\flecha \END(Q_A)$ and $B\flecha \END(_AP)$ induce graded ring isomorphisms $A/l(A)\cong A\cdot \END(_BQ)$, $A/r(A)\cong \END(P_B)\cdot A$, $B/r(B)\cong \END(Q_A)\cdot B$ and $B/l(B)\cong B\cdot\END(_AP)$.
  		\item The functors $P\otimes_B -/t_A(P\otimes_B -):B\gr \flecha A\gr$ and $B\cdot \HOM_A(P,-):A\gr\flecha B\gr$ are graded inverse equivalences of categories.
  		\item The functors $P\otimes_B -/t_A(P\otimes_B -):B\gr \flecha A\gr$ and $Q\otimes_A -/t_B(Q\otimes_A -):A\gr \flecha B\gr$ are graded inverse equivalences of categories.
  	\end{enumerate}
  	There are also graded category equivalences between $\grd A$ and $\grd B$ similar to those in (2) and (3).
  \end{theorem}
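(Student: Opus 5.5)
For (1) I will adapt the proof of Proposition~\ref{endomorfismos y producto tensorial como funtor}(1). Take the first isomorphism. Right multiplication gives a graded ring homomorphism $A\flecha \END(_BQ)$, $a\mapsto \cdot a$. Its image lies in $A\cdot\END(_BQ)$: since $A=A^2$, write $a=\sum a_1a_2$, and then $\cdot a=\sum a_1(\cdot a_2)$.

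For the reverse inclusion, take $a=\sum_i\pr{p_i}{q_i}$ and $f\in\END(_BQ)$. The computation already in the paper gives $af=\cdot\sum_i\pr{p_i}{f(q_i)}$, and that computation uses only the associativity relations \eqref{condiciones asoc Contextos}, not any torsion-freeness. So the image is exactly $A\cdot\END(_BQ)$.

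Next I determine the kernel $K=\{a\in A\tq Qa=0\}$. Clearly $l(A)\subseteq K$: if $Aa=0$, then $Qa=QAa=0$ because $Q=QA$. Conversely, if $Qa=0$, then $Aa=\pr{P}{Q}a=\pr{P}{Qa}=0$, so $a\in l(A)$. Hence $A/l(A)\cong A\cdot\END(_BQ)$ as graded rings. The remaining three isomorphisms are symmetric. The argument for $B\flecha\END(_AP)$ uses $PB=P$ and $[Q,P]=B$ in the same way, giving kernel $l(B)$, and the right-hand cases give $r(A)$ and $r(B)$. These identifications go through for both left and right annihilators in the same way as above.

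For (2), Remark~\ref{rem:item(2)NoSiFaltaSinTorcion} says exactly what is needed. The proof of Proposition~\ref{endomorfismos y producto tensorial como funtor}(2) used only Proposition~\ref{hom y tor}, Proposition~\ref{isomorfismo clave}, Lemma~\ref{el HOM siempre es torsionfree} and Corollary~\ref{cor:Hom elimina la torsion}, all of which are stated without torsion-freeness of $A$, $B$, $P$, $Q$. I will check that Proposition~\ref{isomorfismo clave} indeed needs only $M\in A\gr$ and idempotence of $A$, not torsion-freeness of $A$. I will also check that the functors land in the right categories: $B\cdot\HOM_A(P,K)$ is torsion-free by Lemma~\ref{el HOM siempre es torsionfree} and unital since $B^2=B$, and $P\otimes_B L/t_A(\cdot)$ is torsion-free. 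Naturality of $\varphi$ and $\gamma$ is routine, and commutation with suspensions follows from the degree-$e$ gradings.

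For (3), I follow the proof of Proposition~\ref{endomorfismos y producto tensorial como funtor}(3), but avoid Proposition~\ref{primeras propiedades del contexto}(4), which required torsion-freeness. Instead I use the epimorphism $\psi:Q\flecha B\cdot\HOM_A(P,A)$ of Proposition~\ref{primeras propiedades del contexto}(3), whose kernel is left $B$-torsion. For $U\in A\gr$ consider the composite
\[Q\otimes_AU\xrightarrow{\psi\otimes 1}B\cdot\HOM_A(P,A)\otimes_AU\xrightarrow{\eta}B\cdot\HOM_A(P,U).\]
It is surjective, since each factor is surjective (the first by right exactness of the tensor product).

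The main obstacle is showing that this composite has left $B$-torsion kernel. The image of $\ker\psi\otimes U$ in $Q\otimes_AU$ is $B$-torsion, because $B(\ker\psi)=0$. So the whole kernel is an extension of a submodule of $\ker\eta$ (torsion, by Lemma~\ref{gamma con el producto}) by a torsion submodule. An extension of torsion by torsion need not be torsion for general $B$. I will instead argue directly, as in the proof of Lemma~\ref{gamma con el producto}. If $\sum q\otimes u$ is in the kernel, then $\sum\pr{p}{q}u=0$ for all $p$. Then for all $y,x$ we get $[y,x]\sum q\otimes u=\sum y\pr{x}{q}\otimes u=y\otimes\sum\pr{x}{q}u=0$, so $B$ annihilates the element.

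Since the target $B\cdot\HOM_A(P,U)$ is torsion-free, the kernel is exactly $t_B(Q\otimes_AU)$. Hence $Q\otimes_A-/t_B(Q\otimes_A -)\cong B\cdot\HOM_A(P,-)$ naturally, and (3) follows from (2). The right-module versions are obtained symmetrically.
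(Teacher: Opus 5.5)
Your proposal is correct and follows the paper's proof essentially step by step. For (1) you use the same image and kernel computation (the kernel is $l(A)$ via $Q=QA$ and $A=\pr{P}{Q}$). For (2) you make the same appeal to Remark~\ref{rem:item(2)NoSiFaltaSinTorcion}. For (3) you use the same composite $\eta\circ(\psi\otimes 1_U)$ together with the direct check that $B$ annihilates its kernel.

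One small correction to your commentary: because $B=B^2$, torsion modules are closed under extensions ($BN=0$ and $BM\subseteq N$ give $BM=B(BM)\subseteq BN=0$). So the indirect argument via $\ker(\psi\otimes 1_U)$ and $\ker\eta$ would also have worked.
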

  \begin{proof}
  	(1) We give details for  the first map, the other cases being similar. 
  	
  	We begin by proving that the image of the canonical map is $ A\cdot \END(_BQ)$.  Take arbitrary $a\in A$ and $f\in \END(_B Q)$ and write $a=\sum_i\pr{p_i}{q_i}$ with $p_i\in P$ and $q_i\in Q$. Then for any $x\in Q$ we have
  	\begin{eqnarray*}
  		(af)(x)&=&f(xa)=f(x\sum_i\pr{p_i}{q_i})=f(\sum_i [x,p_i]q_i)=\sum_i [x,p_i]f(q_i)= \\ 
  		&=&\sum_i x\pr{p_i}{f(q_i)}=x\sum_i\pr{p_i}{f(q_i)}
  	\end{eqnarray*}
  	so $af=\cdot \sum_i\pr{p_i}{f(q_i)}\in A,$ showing that the image is contained in $ A\cdot \END(_BQ).$  The converse inclusion comes  form the fact that $A$ is idempotent as follows. Take any $a\in A$ and write $a = \sum _i a_i a'_i.$ Then for any $ q\in Q$ we have $qa = \sum _i q a_i a'_i = \sum _i f_i (qa_i) = \sum _i a_i f_i (q),  $ where $f_i = \cdot a'_i.$
  	
  To check injectivity, note that, if $a\in l(A)$ then $_BQ a=\, _B Q Aa=0, $ showing that $l(A)$ is contained in the  kernel of the canonical map. Conversely,  if $a$ belongs to the kernel, then $Qa=0$, so that, $Aa=\pr{P}{Q}a=\pr{P}{Qa}=0$ and  $a\in l(A)$. This means that $A/l(A)\flecha \END(_BQ)$ is injective and hence $A/l(A)\flecha A\cdot \END(_BQ)$ is an isomorphism.
  
  	(2) This immediately follows from Remark~\ref{rem:item(2)NoSiFaltaSinTorcion}. 
  	
  	(3) By (2) it is enough to prove that $Q\otimes_A -/t_A(Q\otimes_A -)$ is graded isomorphic to $B\cdot\HOM_A(P,-)$. By (3) of Proposition~\ref{primeras propiedades del contexto}, we have the bimodule epimorphism  $\psi : Q \to  B\cdot\HOM_A(P,A),$ whose kernel is $t_B(Q).$ Then for any $U\in A\gr$ we have the surjective map 
    $\psi \otimes 1_U : Q \otimes _A U \to   B\cdot\HOM_A(P,A) \otimes U,$ and composing it with  the epimorphism $\eta $ from   Lemma~\ref{gamma con el producto}, we obtain the surjective map
    $$ Q \otimes _A U \to B\cdot\HOM_A(P,A) \otimes _A U \to B\cdot\HOM_A(P,U) . $$ 

     Write   $\delta=\eta  \circ (\psi\otimes 1_U ),
    $ and we shall see that $\delta$ has torsion kernel. 
     Suppose that $\delta(\sum q\otimes u)=0$. Then, by the definition of $\delta$, we have that, $0=\sum \psi(q)(p)u=\sum\pr{p}{q}u$ for all $p\in P$. Now, take any $b\in B$, and write $b=\sum_i[q'_i,p_i],$ $q'_i \in Q,$ $p_i\in P$. Then 
    \begin{eqnarray*}
   b \sum q\otimes u &=&\sum\sum_i[q'_i,p_i]q\otimes  u=\sum\sum_i q'_i\pr{p_i}{q}\otimes _A u=
   \\&& \sum_i q'_i \otimes \sum \pr{p_i}{q}u=\sum_i q'_i \otimes 0=0.
    \end{eqnarray*}
    This means the kernel of $\delta$ is torsion and, consequently,  $\ker\,\delta=t_B(Q\otimes _A U).$ Hence $(Q \otimes _A U)/t_B( Q\otimes _A U)\cong B\cdot\HOM_A(P,U)$, and we are done.
  \end{proof}

\section{Applications}\label{sec:Applications}

 Suppose that  $A$ and  $B$ are idempotent torsion-free graded rings and let $F:A\gr\flechag B\gr$ and  $G:B\gr\flechag A\gr$ be inverse graded category equivalences. Fix a natural isomorphism $\eta:G \circ F\flecha 1_{A\gr}.$
For $M \in A\gr$ denote by $\mathcal{L}_A^g(M)$ the lattice of graded left unital $A$-submodules of $M.$
 Same way as in \cite[Proposition 21.7]{AF} we obtain the following fact.
 
 \begin{proposition}\label{prop:ModuleLatticeIso} With the above notation,  there is a lattice isomorphism
 \[\Lambda_M:\mathcal{L}_A^g(M)\flecha \mathcal{L}_B^g(F(M))\]
 such that, for any graded unital submodule $K\leq M,$ if we denote by $\imath _K:K\flecha M$ the natural inclusion, then $\Lambda_M(K)=Im\,(F(\imath _K))\leq F(M)$. Moreover, for any graded unital submodule $N\leq F(M),$ the inverse lattice isomorphism takes $N$ to  $Im\,(\eta \circ G(\imath _N)), $ where $\imath _N : N \to F(M)$ is the inclusion map.
 \end{proposition}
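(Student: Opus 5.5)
The plan is to adapt the classical argument of \cite[Proposition 21.7]{AF} to $A\gr$, using that an equivalence preserves and reflects monomorphisms and epimorphisms. The first point to settle is which ``monomorphisms'' are relevant. $A\gr$ is not abelian, but it is closed under graded submodules: a graded unital submodule of a torsion-free module is again torsion-free. Hence every $K\in\mathcal{L}_A^g(M)$ lies in $A\gr$, and $\imath_K$ is a morphism there.

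Next I would show that a morphism $f$ of $A\gr$ is a monomorphism in $A\gr$ exactly when it is injective. For the nontrivial direction, take $m$ homogeneous of degree $\sigma$ with $f(m)=0$. Since $M$ is unital we may write $m=\sum a_im_i$; the unital submodule $Am\leq M$ is torsion-free, hence lies in $A\gr$, and its inclusion composed with $f$ vanishes. If $f$ is monic, this forces $Am=0$, and torsion-freeness then gives $m=0$.

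Since $F$ is an equivalence, $F(\imath_K)$ is monic, hence injective. Therefore $\Lambda_M(K)=Im\,F(\imath_K)$ is a graded submodule of $F(M)$ isomorphic to $F(K)\in B\gr$; in particular it is unital, so $\Lambda_M$ is well defined. Define $\Lambda'(N)=Im\,(\eta_M\circ G(\imath_N))$ in the same way. The composite $\eta_M\circ G(\imath_N)$ is again injective, since $\eta_M$ is an isomorphism.

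The core step is to show that $\Lambda'$ and $\Lambda_M$ are mutually inverse and order preserving.
\begin{enumerate}
\item For monotonicity, let $K\leq K'$. Then $\imath_K=\imath_{K'}\circ j$, so $F(\imath_K)=F(\imath_{K'})\circ F(j)$ and $\Lambda_M(K)\subseteq\Lambda_M(K')$. The same argument applies to $\Lambda'$.
\item For $\Lambda'\Lambda_M(K)=K$, write $F(\imath_K)=\imath_N\circ u$, where $N=\Lambda_M(K)$ and $u:F(K)\to N$ is an isomorphism. Then $G(\imath_N)\circ G(u)=GF(\imath_K)$. By naturality of $\eta$, $\eta_M\circ GF(\imath_K)=\imath_K\circ\eta_K$, and $\eta_K$ is surjective. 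Hence $Im(\eta_M\circ G(\imath_N))=Im(\imath_K\circ\eta_K)=K$, because $G(u)$ is an isomorphism.
\item For $\Lambda_M\Lambda'(N)=N$, set $K=\Lambda'(N)$ and write $\eta_M\circ G(\imath_N)=\imath_K\circ v$ with $v$ an isomorphism. Applying $F$ gives $F(\imath_K)\circ F(v)=F(\eta_M)\circ FG(\imath_N)$. Using $\gamma$ together with the triangle-type identity relating $F(\eta_M)$ and $\gamma_{F(M)}$, this becomes $\gamma_{F(M)}^{-1}$-conjugate to $\imath_N\circ\gamma_N$. Its image is therefore $N$ up to the identification of $FGF(M)$ with $F(M)$.
\end{enumerate}

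I expect the main obstacle to be the third step, where $F(\eta_M)$ must be compared with $\gamma_{F(M)}$. This is cleanest if $(F,G,\eta^{-1},\gamma)$ is chosen to be an adjoint equivalence. If that is not available, I would argue more softly. A bijective order-preserving map whose inverse is order preserving is a lattice isomorphism, so it suffices to show that $\Lambda_M$ is injective and surjective. Injectivity follows from step (2). Surjectivity follows from the symmetric construction with $G$: the map $N\mapsto\Lambda'(N)$ is injective by the same naturality argument applied to $\gamma$, and $\Lambda'$ is monotone. These maps are mutually inverse once one composite is the identity and both are injective. Hence $\Lambda_M$ is an isomorphism of posets and therefore of lattices, with inverse $\Lambda'$.
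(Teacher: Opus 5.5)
Your proposal is correct and follows essentially the paper's route: the paper just invokes the argument of \cite[Proposition 21.7]{AF}. Your observation that a monomorphism of $A\gr$ is injective (via the unital graded submodule $Am$) is exactly what is needed to run that argument in the non-abelian category $A\gr$. Your fallback for the inverse is also sound: $\Lambda'(N)=\eta_M(Im\,G(\imath_N))$ is injective by step (2) applied to $G$ and $\gamma$, so no triangle identity is required.
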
 
 
Clearly, if $S\in A\gr$ is a graded simple module then  for any non-zero graded submodule $N\leq S$ we have that $AN\neq 0$ is a graded unital submodule. From this fact and  Proposition~\ref{prop:ModuleLatticeIso}, we obtain the following consequence.

 \begin{corollary}\label{cor:SimpleGoToSimple}
	 Suppose that  $A$ and  $B$ are idempotent torsion-free graded rings and let $F:A\gr\flechag B\gr$ and  $G:B\gr\flechag A\gr$ be inverse graded category equivalences. If $S\in A\gr$ is graded simple (or semisimple) then $F(S)$ is graded simple (or semisimple)  in $B\gr$. 
\end{corollary}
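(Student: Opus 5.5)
The plan is to read off both assertions from the lattice isomorphism $\Lambda_S:\mathcal{L}_A^g(S)\flecha \mathcal{L}_B^g(F(S))$ of Proposition~\ref{prop:ModuleLatticeIso}, taking ``graded simple in $A\gr$'' to mean that $S\neq 0$ and that the only graded unital submodules of $S$ are $0$ and $S$ (the lattice $\mathcal{L}_A^g(S)$ then has exactly two elements). The remark preceding the corollary makes this intrinsic notion agree with the usual one: any nonzero graded submodule $N\leq S$ contains the nonzero graded unital submodule $AN$, where $AN\neq 0$ because $S$ is torsion-free.

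\emph{Simple case.} First, $F(S)\neq 0$, since $F$ is an equivalence and $S\neq 0$. Next, $\Lambda_S$ is a lattice isomorphism, so $\mathcal{L}_B^g(F(S))$ also has exactly two elements, necessarily $0$ and $F(S)$. Finally, if $N\leq F(S)$ is any nonzero graded submodule, then $BN$ is a nonzero graded unital submodule (again because $F(S)$ is torsion-free), so $BN=F(S)$ and hence $N=F(S)$. This shows $F(S)$ is graded simple in the usual sense as well.

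\emph{Semisimple case.} I would use the lattice-theoretic characterization: $S$ is graded semisimple iff every graded unital submodule $K$ has a complement $K'$ in $\mathcal{L}_A^g(S)$, i.e.\ $K\cap K'=0$ and $K+K'=S$. Since $\Lambda_S$ preserves meets, joins, $0$ and top, complements are carried to complements, so every graded unital submodule of $F(S)$ has a complement. Concretely, given $N\leq F(S)$ unital, take $K=\Lambda_S^{-1}(N)$, choose a complement $K'$, and then $\Lambda_S(K')$ is a complement of $N$. An alternative route is to write $S=\bigoplus S_i$ and use that the equivalence $F$ preserves coproducts (direct sums) in $A\gr$, which gives $F(S)\cong\bigoplus F(S_i)$ with each summand simple by the first part.

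\emph{Main obstacle.} The delicate point is that joins in these lattices are sums of submodules, and a sum of unital submodules is unital, so the lattice structure is the expected one. Still, ``semisimple'' must be interpreted via unital graded submodules, and one has to pass between this and genuine direct-sum decompositions inside $B\gr$. To handle this, I would observe that a finite sum $N+N'$ with $N\cap N'=0$ is direct, and for arbitrary decompositions I would rely on the coproduct-preservation argument rather than the lattice argument.
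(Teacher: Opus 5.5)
Your proposal is correct and follows the paper's argument. For the simple case you use the lattice isomorphism $\Lambda_S$ of Proposition~\ref{prop:ModuleLatticeIso} together with the observation that $AN$ (resp.\ $BN$) is a nonzero graded unital submodule. For the semisimple case you use that the equivalence $F$ preserves direct sums (coproducts) in $A\gr$, which is exactly what the paper does. Your complement-based route is not needed, and its key step is unsupported here: the fact that every submodule having a complement forces semisimplicity is not established for a general idempotent $A$, since its usual proof relies on maximal submodules of cyclic modules. You are right to fall back on the coproduct argument.
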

\begin{proof}
 The assertion on graded simplicity  has been explained already, and  for semisimple modules we may use that fact that $F$ distributes over the direct sums.
\end{proof}
 
 We proceed  by giving a classical relationship between the graded two-sided ideals of $A$ and $B$. We denote by 
 $\mathcal T_{A}$ the lattice of all graded two-sided ideals $I$ of $A$ which are unital, i.e. $IA = AI =I. $ Obviously $I$ is a torsion-free $A$-bimodule, as so is $A.$ Define similarly  $\mathcal{T}_B$. 

 \begin{proposition}\label{prop:IdealLatticeIso}
  Let  $A$ and $B$ be graded  equivalent torsion-free idempotent graded rings with the associated  graded Morita context $(A,B,P,Q,\langle -,- \rangle, [-,-])$ given by Theorem~\ref{Car Equiv}.  Then, there exists a lattice isomorphism between $\mathcal{T}_A$ and $\mathcal{T}_B$, given by $I\mapsto [QI,P]$ and $J\mapsto \pr{PJ}{Q}$.
 \end{proposition}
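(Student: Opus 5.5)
Define $\Phi(I)=[QI,P]$ for $I\in\mathcal T_A$ and $\Psi(J)=\pr{PJ}{Q}$ for $J\in\mathcal T_B$. Here $[X,Y]$ and $\pr{X}{Y}$ denote the additive subgroups generated by the corresponding brackets. The plan is to check, in order, that $\Phi$ and $\Psi$ land in the right lattices, that they are mutually inverse, and that they preserve inclusion; a monotone bijection with monotone inverse is then a lattice isomorphism.

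\emph{Well-definedness.} Since $\nu$ is a graded bimodule map of degree $e$ and $QI$ is a graded sub-bimodule of $Q$ (as $I$ is a graded ideal), the set $[QI,P]$ is a graded subgroup of $B$. It is a two-sided ideal because $b[qi,p]b'=[bqi,pb']$ and $bQI\subseteq QI$. For unitality, $B[QI,P]=[BQI,P]=[QI,P]$ since $BQ=Q$, and $[QI,P]B=[QI,PB]=[QI,P]$ since $PB=P$. So $\Phi(I)\in\mathcal T_B$, and symmetrically $\Psi(J)\in\mathcal T_A$.

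\emph{Mutual inverses.} This is the heart of the argument, done by associativity \eqref{condiciones asoc Contextos} together with surjectivity of the trace maps. I would compute
\[
\Psi\Phi(I)=\pr{P[QI,P]}{Q}.
\]
Using $p'[qi,p]=\pr{p'}{qi}p$ and $\pr{p'}{qi}=\pr{p'}{q}i$, this becomes $\pr{\pr{P}{Q}IP}{Q}=\pr{AIP}{Q}$. Since $AI=I$, we get $\pr{IP}{Q}=I\pr{P}{Q}=IA=I$.

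Symmetrically, $\Phi\Psi(J)=[Q\pr{PJ}{Q},P]$. Using $q\pr{pj}{q'}=[q,pj]q'=[q,p]jq'$, this reduces to $[[Q,P]JQ,P]=[BJQ,P]=[JQ,P]$. Then $[JQ,P]=J[Q,P]=JB=J$.

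The main care point is the bookkeeping of additive spans. Identities like $\pr{\pr{P}{Q}IP}{Q}=A\,I\,\pr{P}{Q}$ hold as equalities of additive subgroups, using bilinearity and the fact that every element of $A$ (respectively $B$) is a finite sum of brackets by Theorem~\ref{Car Equiv}(4). I would also verify $QI\cdot$-type steps such as $[q,p]jq'\in [Q,P]JQ$ directly from \eqref{condiciones asoc Contextos}, rather than from a module-theoretic identity.

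\emph{Order and lattice operations.} Both maps are clearly inclusion-preserving. Since they are inverse bijections, they preserve arbitrary joins (sums) and meets, and hence give a lattice isomorphism between $\mathcal T_A$ and $\mathcal T_B$. Grading is automatic throughout, because the brackets have degree $e$.
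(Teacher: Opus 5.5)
Your proof is correct and follows the same route as the paper. Both check that $[QI,P]\in\mathcal T_B$ and $\pr{PJ}{Q}\in\mathcal T_A$, then use the associativity identities \eqref{condiciones asoc Contextos}, bilinearity, surjectivity of the trace maps and unitality to get $\pr{P[QI,P]}{Q}=I$ and $[Q\pr{PJ}{Q},P]=J$; you are simply more explicit about the span bookkeeping and about why a monotone bijection with monotone inverse is a lattice isomorphism.
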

 \begin{proof}
  First note that, for any $I\in\mathcal{T}_A$ and $J\in\mathcal{T}_B$ we have that $[QI,P]\in \mathcal{T}_B$ and 
  $\langle PJ,Q \rangle \in \mathcal{T}_A.$
  Then

  \[I \mapsto  [QI,P] \mapsto \pr{P[QI,P]}{Q}=\pr{\pr{P}{QI}P}{Q} =\pr{P}{Q} I\pr{P}{Q}=I, \] and similarly, $J \mapsto \pr{PJ}{Q} \mapsto [Q \pr{PJ}{Q},P] = J, $
proving our result.
 \end{proof}

 We shall prove that being faithful is an invariant property under graded equivalence.  Clearly, a left $A$-module is faithful if and only if for any two-sided ideal $I\leq A$ the equality  $IM=0$ implies $I=0$.
  
 \begin{lemma}\label{lem:faithful}
 Let  $A$ and $B$ be graded equivalent torsion-free idempotent graded rings with the associated  graded Morita context $(A,B,P,Q,\langle -,- \rangle, [-,-])$ given by Theorem~\ref{Car Equiv}. If $M\in A\gr$ is faithful then $B\cdot \HOM(P,M)$ is also faithful.
\end{lemma}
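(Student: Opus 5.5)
We argue by contraposition through two-sided ideals, using the characterization of faithfulness recalled just before the lemma. Set $N=B\cdot\HOM_A(P,M)$. Suppose $J\leq B$ is a two-sided ideal with $JN=0$; we must show $J=0$. It is harmless to first replace $J$ by the ideal $I:=\pr{PJ}{Q}\leq A$. This is a two-sided ideal of $A$: it is closed under left multiplication by $A$ since $AP\subseteq P$, and under right multiplication since $QA\subseteq Q$. The aim is to show $IM=0$. Faithfulness of $M$ then gives $I=0$, and we go back to $J$ through the associativity of the context.

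\emph{Step 1: $JN=0$ forces $f(PJ)=0$ for all $f\in\HOM_A(P,M)$.} For $j\in J$, $b\in B$, $f\in\HOM_A(P,M)$ and $x\in P$ we have $(jbf)(x)=f(xjb)$. Since $JB\subseteq J$, this gives $f(PJB)=0$. The ideal $J$ need not be unital, so to absorb $B$ we use $B=BB$: then $jbf\in JN$ for every $b\in B$, hence $f(xjb)=0$. Elements of the form $jb$ span $JB$, and we only get control of $f$ on $PJB$. So replace $J$ by $J':=JB$ throughout; $J'=0$ will force $J=0$, because $B$ is right torsion-free ($JB=0$ implies $J\subseteq r(B)=0$). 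Thus $f(PJ')=0$ for all $f\in\HOM_A(P,M)$.

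\emph{Step 2: transfer to $M$.} Take $m\in M$. Right multiplication $\cdot m$ composed with $\pr{-}{q}$ gives the homomorphism $x\mapsto \pr{x}{q}m$, which lies in $\HOM_A(P,M)$ for each $q\in Q$. By Step 1, $\pr{PJ'}{q}m=0$ for all $q$ and $m$. Hence $I'M=0$, where $I'=\pr{PJ'}{Q}$, and faithfulness of $M$ yields $I'=0$.

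\emph{Step 3: back to $J'$.} Using $B=[Q,P]$ and the associativity rules \eqref{condiciones asoc Contextos}, we compute
\[
PJ'B=PJ'[Q,P]=\pr{PJ'}{Q}P=I'P=0.
\]
Then $P_B$ torsion-free gives $PJ'=0$. Since $P\cdot J'=0$, we get $[Q,PJ']=0$, so $[Q,P]J'=BJ'=0$, and left torsion-freeness of $B$ gives $J'=0$. Finally, Step 1 gives $J=0$.

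The main obstacle is bookkeeping the non-unital ideal $J$, namely passing from $J$ to $JB$ and back. This is handled by torsion-freeness of $B$ together with $B=B^2$; the rest is formal manipulation of the context identities, using Theorem~\ref{Car Equiv} for surjectivity of the trace maps and torsion-freeness of $P$.
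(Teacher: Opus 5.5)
Your proof is correct, but it takes a different route from the paper.

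\emph{The paper's route.} It works in contrapositive form and through the functorial machinery. From a non-faithful $M$ it forms the nonzero ideal $J=A\,Ann_A(M)\,A\in\mathcal T_A$. It then uses three ingredients:
\begin{itemize}
\item the epimorphism $\varphi\colon P\otimes_B F(M)\to M$ with torsion kernel from Proposition~\ref{hom y tor};
\item the fact that $P\otimes_B-/t_A(P\otimes_B-)$ is an equivalence;
\item the lattice isomorphism $\mathcal T_A\cong\mathcal T_B$ of Proposition~\ref{prop:IdealLatticeIso}.
\end{itemize}
Together these produce the nonzero ideal $[QJ,P]$, which annihilates $F(M)$.

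\emph{Your route.} You argue element-wise. An ideal $J$ killing $F(M)$ gives, via the homomorphisms $x\mapsto\pr{x}{q}m$, the ideal $\pr{PJB}{Q}$ killing $M$. Then surjectivity of $[-,-]$, together with torsion-freeness of $P_B$ and of $B$ on both sides, pulls $\pr{PJB}{Q}=0$ back to $J=0$.

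\emph{What your version buys.}
\begin{itemize}
\item It is self-contained and needs much less: only the context identities, surjectivity of $[-,-]$, and torsion-freeness of $B$ and $P_B$. It uses no equivalence of categories, no $\varphi$, no ideal-lattice isomorphism, and no torsion-freeness of $A$.
\item It avoids tensor products altogether. The paper's identification $P\otimes_B[QJ,P]F(M)=P[QJ,P]\otimes_B F(M)$ has to be read as an equality of images, since $P\otimes_B-$ is not left exact.
\item It proves exactly the stated implication. The paper's argument as written shows ``$M$ not faithful $\Rightarrow$ $F(M)$ not faithful'', which is the converse. The stated direction there requires running the same argument for the inverse equivalence $A\cdot\HOM_B(Q,-)$.
\end{itemize}

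\emph{What the paper's version buys.} It is conceptual: faithfulness is visibly transported by the equivalence through the correspondence of unital ideals.

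A minor presentational point: in Step~1 the remarks about $JB\subseteq J$ and $B=BB$ play no role. You have $j(bf)=0$ simply because $bf\in N$.
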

\begin{proof}
Suppose that $M\in A\gr$ is not faithful. Then, taking $I=Ann_A(M)\neq 0$ we have that $IM=0$ and then $AIM= 0.$ Since $A$ is torsion-free, we also have  $AI\neq 0$ and   $J=AIA \neq 0$. Then obviously, $J$ is two-sided unital and, since $J\subseteq A,$ it is torsion-free, so that $J\in \mathcal T _A.$  Write $F=B\cdot \HOM(P,-)$.

By Proposition~\ref{hom y tor} and Proposition~\ref{isomorfismo clave} we know that there is a surjective graded map of graded $A$-modules $\varphi:P\otimes_B F(M)\flecha M$ with torsion kernel.

Now, 
\begin{eqnarray*}
P\otimes_B [QJ,P]F(M)&=& P [QJ,P] \otimes_B F(M) = \pr{P}{QJ}P  \otimes_B F(M) \\&=& \pr{P}{Q}JP  \otimes_B F(M) = AJP  \otimes_B F(M) =   JP\otimes_B F(M)
\end{eqnarray*}
and then  $$\varphi(P\otimes_B [QJ,P]F(M))=\varphi(JP\otimes_B F(M))=J\varphi(P\otimes_B F(M))=JM=0,$$ which implies that $P\otimes_B [QI,P]F(M)=t_B(P\otimes_B [QI,P]F(M))$.

Since $(P\otimes_B-)/t_B((P\otimes_B-)$ is an equivalence, we conclude that $ [QJ,P]F(M)=0.$ Thanks to the lattice isomorphism in Proposition~\ref{prop:IdealLatticeIso}, $ [QJ,P]\neq 0 ,$   so that $F(M)$ cannot be faithful.
\end{proof}

   Finally, the definition of a graded equivalence, Proposition~\ref{prop:ModuleLatticeIso}, Corollary~\ref{cor:SimpleGoToSimple}, Proposition~\ref{prop:IdealLatticeIso} and Lemma~\ref{lem:faithful} imply the next fact.

 \begin{proposition} For idempotent torsion-free graded rings, the following properties are invariant under graded equivalences:  being graded simple, being graded left (or right) semisimple,  being graded left (or right) primitive, having   the graded Jacobson  radical to be zero (i.e. being graded semiprimitive). 
  \end{proposition}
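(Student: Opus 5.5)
We treat each listed property separately, always working with inverse graded equivalences $F:A\gr\flechag B\gr$, $G:B\gr\flechag A\gr$. By Theorem~\ref{Car Equiv} we may take the context $(A,B,P,Q,\mu,\nu)$ with surjective trace maps, and by Proposition~\ref{F y G iso a los HOM} we may assume $F\cong B\cdot\HOM_A(P,-)$. Right-sided versions follow from the corollary to Theorem~\ref{teo:UsefulCharacterization}, which says that $\grd A$ and $\grd B$ are then also graded equivalent, so we only argue on the left.

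\emph{Graded simplicity of the ring.} Here we read graded simple as having no nontrivial graded two-sided ideals in $\mathcal T_A$. Since $A$ is idempotent, $AIA\in\mathcal T_A$ for any graded ideal $I$, and $AIA\neq 0$ whenever $I\neq 0$ by torsion-freeness, exactly as in the proof of Lemma~\ref{lem:faithful}. Hence $A$ is graded simple if and only if $\mathcal T_A=\{0,A\}$. The lattice isomorphism of Proposition~\ref{prop:IdealLatticeIso} sends $0\mapsto 0$ and $A\mapsto [QA,P]=[Q,P]=B$, so $\mathcal T_A=\{0,A\}$ if and only if $\mathcal T_B=\{0,B\}$.

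\emph{Graded left semisimplicity.} We interpret this as ${}_AA$ being a direct sum of graded simple objects of $A\gr$, or equivalently as every object of $A\gr$ being graded semisimple. The second formulation is purely categorical, and Corollary~\ref{cor:SimpleGoToSimple} applied to $F$ and $G$ transports it. If the first formulation is the intended definition, we first deduce the second from it using that $A$ is a generator (Remark~\ref{traza sobe el anillo hace generador}): every $M$ is a sum of images of suspensions of $A$, hence a sum of graded simples, hence semisimple by the usual argument with the lattice $\mathcal L^g_A(M)$, which is preserved by Proposition~\ref{prop:ModuleLatticeIso}.

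\emph{Graded left primitivity and graded semiprimitivity.} Graded left primitivity means there is a faithful graded simple $S\in A\gr$. Corollary~\ref{cor:SimpleGoToSimple} makes $F(S)$ graded simple, and Lemma~\ref{lem:faithful} makes $F(S)$ faithful; the symmetric statement for $G$ gives the converse. For semiprimitivity, $J^g(A)$ is the intersection of the annihilators of the graded simple modules. Simple modules killed by $A$ are excluded, since such a module is annihilated by $A$ and the intersection is then unaffected. It therefore suffices to take $S\in A\gr$. We then get $J^g(A)=0$ if and only if the family of graded simples in $A\gr$ is jointly faithful, i.e. $\bigoplus S$ is faithful. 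This is again transported by $F$ through Lemma~\ref{lem:faithful} and Corollary~\ref{cor:SimpleGoToSimple}, using that $F$ commutes with direct sums.

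The main obstacle is the semiprimitivity step. We must check that the graded Jacobson radical is correctly described by unital torsion-free graded simples, and we must justify that $F$ preserves the direct sum; the latter holds because $F$ is an equivalence of additive categories, so it preserves coproducts.
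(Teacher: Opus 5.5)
Your proposal is correct and follows the paper's own argument. The paper simply cites the definition of graded equivalence together with Proposition~\ref{prop:ModuleLatticeIso}, Corollary~\ref{cor:SimpleGoToSimple}, Proposition~\ref{prop:IdealLatticeIso} and Lemma~\ref{lem:faithful}, and you use exactly these ingredients for the same properties, adding the routine reductions the paper leaves implicit ($0\neq AIA\in\mathcal T_A$ for $I\neq 0$, graded simples with $AS\neq 0$ lie in $A\gr$, and the annihilator description of $J^g$); invoking both Lemma~\ref{lem:faithful} and its symmetric version for $G$ is the right move, since the written proof of that lemma actually shows ``$F(M)$ faithful $\Rightarrow$ $M$ faithful'' and the two together give what you need.
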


  \section*{Acknowledgements}
The first-named author was partially supported by 
Funda\c c\~ao de Amparo \`a Pesquisa do Estado de S\~ao Paulo (Fapesp), process no.  2020/16594-0, and by  Conselho Nacional de Desenvolvimento Cient\'{\i}fico e Tecnol{\'o}gico (CNPq), process no. 312683/2021-9. 
Both authots were partially supported by the Spanish Government Grant PID2024-155576NB-I00 funded by  MICIU/AEI/ 10.13039/501100011033 /FEDER, UE and Fundaci{\'o}n S{\'e}neca (22004/PI/22). The first-named  author is grateful to the Department of Mathematics of the University of Murcia for its cordial  hospitality during his
 visit.  The second-named author would  like to thank the Department of Mathematics of the University of S{\~a}o Paulo  for its warm hospitality during his visits.


\begin{thebibliography}{10}
\bibitem{Abrams} G.\ D.\  Abrams, Morita equivalence for rings with local units, {\it Commun. Algebra}
\textbf {11} (1983), 801--837.
\bibitem{AF} F. W. Anderson and K. R. Fuller, \textit{Rings and Categories of Modules}, Springer, Berlin, 1974.
\bibitem{Boisen} P. Boisen, Graded Morita Theory, {\it J.~Algebra}
     \textbf {164} (1994), 1--25.
\bibitem{AnhMarki} P.\ N.\ \'{A}nh, L.\ M\'{a}rki, Morita equivalence
for rings without identity,  Tsukuba J. Math. {\bf 11}
(1987), 1--16.
\bibitem{AbDoEx2024} F.\ Abadie, M.\ Dokuchaev, R.\ Exel, 
Strong equivalence of graded algebras,
{\it J. Algebra},  {\bf 659}, (2024),  818--858.
 \bibitem{GS} J. L. Garc\'{i}a and J. J. Sim\'on, Morita Equivalence for Idempotent Rings, {\it J. Pure Appl. Alg.}, {\bf 76}, (1991), 39--56. 
 \bibitem{GordonGreen} R.\ Gordon,  E.\ Green,  Graded Artin algebras, {\it J. Algebra}  \textbf {76}  (1982), 111--137.
\bibitem{Haefner94} J. Haefner,
Graded Morita theory for infinite groups,
{\it J. Algebra}, \textbf { 169}, (1994),   no. 2, 552--586.
\bibitem {Haefner95} J. Haefner, Graded equivalence theory with applications,
{\it J. Algebra} \textbf {172},  (1995), no. 2,   385--424.
  \bibitem{hazrat}  R. Hazrat, {\it Graded Rings and Graded Grothendieck Groups}, London Mathematical Society Lecture Note Series, vol. 435, Cambridge University Press, Cambridge (2016).
  \bibitem{MeniniNast} C.\ Menini and C.\ N\u {a}st\u {a}sescu, When is R-gr equivalent to the category of
modules?, {\it J. Pure  Appl. Algebra} \textbf {51} (1988), 277--291.
  \bibitem{methods} C. Nastasescu.  F. Van Oystaeyen, \textit{Methods of Graded Rings}, Springer, Berlin, 2004.
  \bibitem{Nobusawa} N. Nobusawa, $\Gamma$-rings and Morita equivalence of rings, {\it Math. J. Okayama Univ.} \textbf{26}, (1984), 151--156.
 \end{thebibliography}
\end{document}